\newtheorem{thm}{Theorem}[section]
\newtheorem{lem}[thm]{Lemma}
\newtheorem{cor}[thm]{Corollary}
\newtheorem{prop}[thm]{Proposition}
\newtheorem{defi}[thm]{Definition}
\newtheorem{conj}{Conjecture}
\newtheorem{athm}{Theorem}
\newtheorem{alem}[athm]{Lemma}
\newtheorem{acor}[athm]{Corollary}
\newtheorem{aprop}[athm]{Proposition}
\theoremstyle{definition}
\newtheorem*{remark}{Remark}
\newtheorem{exam}{Example}[section]
\renewcommand{\ker}{\operatorname{ker}}               
\newcommand{\im}{\operatorname{im}}                   
\newcommand{\rank}{\operatorname{rank}}
\newcommand{\Aut}{\operatorname{Aut}}
\newcommand{\ra}{\rightarrow}
\newcommand{\mZ}{\mathbb{Z}}
\newcommand{\mF}{\mathbb{F}}
\newcommand{\mQ}{\mathbb{Q}}
\newcommand{\mN}{\mathbb{N}}
\newcommand{\Hom}{\operatorname{Hom}}
\newcommand{\Stab}{\operatorname{Stab}}
\newcommand{\Span}{\operatorname{span}}
\newcommand{\GL}{\operatorname{GL}}
\newcommand{\Tr}{\operatorname{Tr}}
\newcommand{\M}{\operatorname{M}}
\renewcommand{\rm}{\mathrm}
\newcommand{\mbf}{\mathbf}
\newcommand{\Cal}{\mathcal}
\newcommand{\End}{\operatorname{End}}
\newcommand{\hra}{\hookrightarrow}
\newcommand{\thra}{\twoheadrightarrow}
\renewcommand{\Tr}{\operatorname{Tr}}
\newcommand{\N}{\operatorname{N}}
\newcommand{\Fix}{\operatorname{Fix}}
\newcommand{\tr}{\operatorname{tr}}
\newcommand{\lda}{\lambda}
\newcommand{\bs}{\boldsymbol}
\newcommand{\scr}{\mathscr}
\newcommand{\lra}{\longrightarrow}
\newcommand{\ol}{\overline}
\newcommand{\preq}{\preccurlyeq}
\newcommand{\op}{\operatorname{op}}
\newcommand{\vph}{\vphantom}
\newcommand{\node}{\circle*{1.5}}
\newsavebox{\rvec}
\sbox{\rvec}{
\begin{picture}(20,3)
\put(-2,0){\vector(1,0){12}}
\put(10,0){\line(1,0){9}}
\end{picture}}
\newsavebox{\ruvec}
\sbox{\ruvec}{\begin{picture}(20,10)
\put(0,0){\vector(2,1){11}}
\put(10,5){\line(2,1){10}}
\end{picture}}
\newsavebox{\rdvec}
\sbox{\rdvec}{\begin{picture}(20,10)
\put(0,0){\vector(2,-1){11}}
\put(10,-5){\line(2,-1){10}}
\end{picture}}
\newsavebox{\rveca}
\sbox{\rveca}{\begin{picture}(10,3)
\put(0,0){\vector(1,0){6}}
\put(5,0){\line(1,0){5}}
\end{picture}}
\newsavebox{\ruveca}
\sbox{\ruveca}{\begin{picture}(10,5)
\put(0,0){\vector(2,1){6}}
\put(5,2.5){\line(2,1){5}}
\end{picture}}
\newsavebox{\rdveca}
\sbox{\rdveca}{\begin{picture}(10,5)
\put(0,0){\vector(2,-1){6}}
\put(5,-2.5){\line(2,-1){5}}
\end{picture}}
\title{Conjugacy classes in 
parabolic subgroups of general linear groups}
\author{Anton Evseev\footnote{Selwyn College, Cambridge, CB3 9DQ, UK, A.Evseev@dpmms.cam.ac.uk}}
\date{with an appendix by  \\
Anton Evseev and George Wellen\footnote{Mathematical Institute, 24-29 St Giles', Oxford, OX1 3LB, UK, wellen@maths.ox.ac.uk}}
\begin{document}

\maketitle

\abstract{We prove a formula connecting the number of unipotent conjugacy classes in a maximal parabolic subgroup of a finite general linear group with the numbers of unipotent conjugacy classes in various parabolic subgroups in smaller
dimensions. We generalise this formula and deduce a number of corollaries; in
particular, we express the number of conjugacy classes of unitriangular
matrices over a finite field in terms of the numbers of unipotent conjugacy
classes in maximal parabolic subgroups over the same field. We show how the
numbers of unipotent conjugacy classes in parabolic subgroups of small
dimensions may be calculated.}

\section{Introduction}\label{parintro}

Let $q$ be a prime power and $\mF_q$ be the finite field with $q$ elements. 
If $k$ and $m$ are nonnegative integers, let  
$\M_{m,k}(q)$ be the set of all $m\times k$ matrices over $\mF_q$.
Let $\mbf{l}=(l_1,\ldots,l_s)$ be a sequence of nonnegative integers.  
Writing  $\M_k (q) = \M_{k,k}(q)$, let $M^{\mbf l}(q)$ 
be the set of all matrices of the form
$$\begin{pmatrix}
\M_{l_1}(q) & \M_{l_1,l_2}(q) & \ldots & \M_{l_1,l_s}(q) \\
0 & \M_{l_2} (q) & \ldots & \M_{l_2,l_s}(q) \\
\vdots & \ddots & \ddots & \vdots \\
0 & 0 & \ldots & \M_{l_s}(q)
\end{pmatrix}.
$$

Let
$P^{\mbf{l}}(q)$ be the group of all invertible matrices in $\M^{\mbf l}(q)$. The group
$P^{\mbf{l}} (q)$ is called a \emph{parabolic} subgroup of the group 
$\GL_m (q)$, where $m=l_1+\cdots+l_s$. Alternatively, a parabolic subgroup may be described as the stabiliser of a flag in the vector space $\mF_q^m$.
Let $N^{\mbf l}(q)$ be the set of all nilpotent matrices in $M^{\mbf l}(q)$.

The group $P^{\mbf l}(q)$ acts on the set $M^{\mbf l}(q)$ by conjugation:
$^{g}x=gxg^{-1}$ ($g\in P^{\mbf{l}}(q)$, $x\in M^{\mbf{l}}(q)$). We investigate 
the number of orbits of this action and also the numbers of orbits of the 
actions of $P^{\mbf l}(q)$ on certain subsets of $M^{\mbf l} (q)$. 

Throughout the paper, we denote by $\gamma(G,X)$ the number of orbits of 
an action of a group $G$ on a finite set $X$, where the action is understood.
Let 
$$
\rho_{\mbf l}(q)=\gamma(P^{\mbf l}(q), N^{\mbf l}(q)).
$$

 Although the groups $P^{\mbf{l}}(q)$ have received a lot of attention, not much is known about the numbers $\gamma(P^{\mbf l}(q),P^{\mbf l}(q))$ and 
$\rho_{\mbf l}(q)$. 
In particular, it is not known whether the following is true.

\begin{conj} \label{gencon} For every tuple $\mbf l=(l_1,\ldots,l_s)$ of 
nonnegative integers, $\rho_{\mbf l}(q)$ is polynomial in $q$.
\end{conj}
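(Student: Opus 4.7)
The natural strategy is a double induction, on the length $s$ of $\mbf l = (l_1,\ldots,l_s)$ and on $m = l_1 + \cdots + l_s$. The base case $s = 1$ is classical: $P^{(m)}(q) = \GL_m(\mF_q)$ acts by conjugation on the nilpotent cone $N^{(m)}(q)$, and Jordan canonical form gives $\rho_{(m)}(q) = p(m)$, the number of partitions of $m$, which is constant in $q$ and hence trivially polynomial. For the inductive step one would exploit the recursion announced in the abstract, which expresses $\rho_{\mbf l}(q)$ for a two-block tuple (a maximal parabolic) in terms of $\rho_{\mbf l'}(q)$ for tuples $\mbf l'$ of strictly smaller total dimension. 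A general parabolic corresponds to refining a two-block flag, so if the recursion can be iterated in a way that preserves polynomiality at each step, an induction should go through.

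To make this precise, I would stratify $N^{\mbf l}(q)$ into $P^{\mbf l}(q)$-stable locally closed subsets, indexed for example by the Jordan type of the ``diagonal'' block, by the ranks of certain off-diagonal submatrices, or by analogous discrete invariants. On each stratum one seeks an equivariant fibration whose base is a nilpotent orbit space for a smaller parabolic and whose fibres are affine spaces; summing the contributions and verifying that the fibre counts are pure powers of $q$ would then propagate polynomiality. The combinatorial content of the main formula of the paper should, in principle, provide the first such stratification in the case $s = 2$.

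The fundamental obstacle is immediate and, in my view, essentially insurmountable by induction alone: for $\mbf l = (1,1,\ldots,1)$, $P^{\mbf l}(q)$ is the full Borel subgroup and $N^{\mbf l}(q)$ is the space of strictly upper triangular matrices, so $\rho_{\mbf l}(q)$ is precisely the number of conjugacy classes of the unitriangular group $U_m(\mF_q)$. Polynomiality of this quantity in $q$ is Higman's conjecture, open since 1960 and verified by Vera-L\'opez and Arregi only for small $m$. Since the Borel lies at the bottom of any recursion on $\mbf l$ and cannot be reduced to a genuinely smaller parabolic, Conjecture \ref{gencon} is at least as hard as Higman's. A completely different route would be geometric: one would attempt to show that the quotient stack $N^{\mbf l}/P^{\mbf l}$ has a polynomial point count in the sense of Katz, via purity of the Frobenius action on its compactly supported $\ell$-adic cohomology. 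Such a purity statement would simultaneously resolve Higman's conjecture; the hard part, in any of these guises, is Higman's conjecture itself.
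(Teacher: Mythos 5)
The statement you were asked about is Conjecture~\ref{gencon}, which the paper explicitly leaves open (``it is not known whether the following is true''); the paper contains no proof of it, so there is nothing for your proposal to be compared against. You correctly recognize this: your ``proof'' is a discussion of strategy that ends by identifying the real obstruction, namely that the case $\mbf l=(1^m)$ is equivalent to Higman's conjecture on conjugacy classes of $U_m(\mF_q)$, and so Conjecture~\ref{gencon} is at least as hard as that long-open problem. This is exactly the situation the paper describes in the introduction, and being honest about it is the right call.

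One small correction to your picture of the recursion is worth making. You say the Borel ``lies at the bottom of any recursion on $\mbf l$ and cannot be reduced to a genuinely smaller parabolic,'' but the paper's Theorem~\ref{impl} runs the other way: it expresses $\rho_{(1^m)}(q)$ as a fixed integer linear combination of the $\rho_{(j,m)}(q)$ for $j=0,\dots,m(m-1)/2$, so Conjecture~\ref{maxcon} (maximal parabolics) implies Conjecture~\ref{unicon} (the Borel/unitriangular case). More generally, Theorem~\ref{rec} shows that adding one block of arbitrary size to a fixed $\mbf l$ of total dimension $m$ costs only finitely many ``seed'' values $\rho_{j,\mbf l}(q)$ with $j\le m(m-1)/2$. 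So the recursion does propagate polynomiality across block sizes; what it cannot do is reduce the number of blocks below $s$ once $m$ is fixed, and that is where the inductive step you sketch (stratifying $N^{\mbf l}$ into affine-fibred pieces over smaller parabolic orbit spaces with pure power-of-$q$ fibre counts) would need genuinely new input. The paper's own recursion (Theorem~\ref{gensq} and its proof via Proposition~\ref{link}, Corollary~\ref{genlink} and Lemma~\ref{indlem}) is precisely a rigorous version of the stratification you describe in the $s=2$ step, but even there the base cases of the induction remain open in general. Your closing remark about a purity/point-count argument in the sense of Katz is a reasonable alternative framing, with the same caveat that any such purity statement would already settle Higman's conjecture.
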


(Here, and in what follows, we call a rational-valued function $f(x)$ defined on a set $D$ of integers \emph{polynomial} if there exists a polynomial $g$ 
with rational coefficients such that $f(x)=g(x)$ for all $x\in D$.) 
Write $(1^m)=\underbrace{(1,1,\ldots,1)}_{m}$.
The following two special cases of Conjecture~\ref{gencon} have received considerable attention and will be of particular interest to us.

\begin{conj}\label{unicon} For every positive integer $m$, 
$\rho_{(1^m)} (q)$ is a polynomial in $q$ with rational coefficients.
\end{conj}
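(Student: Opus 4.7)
The natural attack, given the framework the paper develops, is strong induction on $m$ combined with the reduction formula promised in the abstract (``we express the number of conjugacy classes of unitriangular matrices over a finite field in terms of the numbers of unipotent conjugacy classes in maximal parabolic subgroups over the same field''). First I would dispose of small base cases directly: for $m\le 5$ or so, $\rho_{(1^m)}(q)$ can be computed by explicit enumeration of $P^{(1^m)}(q)$-orbits on $N^{(1^m)}(q)$, matching the promised calculations for parabolics of small dimension, and each such count is visibly polynomial in $q$.

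For the inductive step, I would invoke the paper's reduction to write
\[
\rho_{(1^m)}(q) = \sum_i c_i(q)\, \rho_{\mbf{l}^{(i)}}(q),
\]
where each $\mbf{l}^{(i)}$ corresponds to a maximal parabolic subgroup in dimension strictly less than $m$ (so each $\mbf{l}^{(i)}$ has length~$2$ or is otherwise lower in the induction), and the coefficients $c_i(q)$ are polynomial. Any terms of the form $\rho_{(1^{m'})}(q)$ with $m'<m$ that appear would be polynomial by the inductive hypothesis. Polynomiality of $\rho_{(1^m)}(q)$ would then follow at once, provided one also has polynomiality of each $\rho_{(a,b)}(q)$ arising.

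The central obstacle, therefore, is establishing polynomiality for maximal parabolic cases $\rho_{(a,b)}(q)$. This is itself a special instance of Conjecture~\ref{gencon}, but maximal parabolics are substantially more structured than length-$m$ Borel-type flags. My approach here would be to stratify $N^{(a,b)}(q)$ by (i) the Jordan type of the two diagonal nilpotent blocks and (ii) the rank and interaction data of the off-diagonal $\M_{a,b}(q)$ block with the kernels/images of the diagonal pieces, and then count $P^{(a,b)}(q)$-orbits on each stratum. Each stratum should be the $\mF_q$-points of a constructible set whose count is polynomial by a standard stratification argument; the delicate point is ensuring that the orbit-counting fibration behaves uniformly in $q$ and that the transition matrix from strata to $P^{(a,b)}(q)$-orbits does not introduce $q$-dependent denominators or congruence conditions.

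I expect this last point to be the main hurdle, and in fact it is the reason the full conjecture has resisted proof: pathological behaviour in $q$ is known to creep in for certain orbit parametrisations as soon as the tuple $\mbf l$ becomes sufficiently complicated. An alternative route worth keeping in reserve is the orbit/supercharacter method, translating orbit counts on $N^{(a,b)}(q)$ to character-theoretic sums over coadjoint orbits of the Lie algebra, where polynomiality might be inherited from the polynomial character theory of $P^{(a,b)}(q)$; but this too ultimately encounters the same wild-representation-type obstruction that makes Conjecture~\ref{unicon} a longstanding open problem.
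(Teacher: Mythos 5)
The statement you are asked to prove is a \emph{conjecture} in the paper, not a theorem; the paper explicitly records that it is open (known only for $m\le 13$ via Arregi and Vera-L\'opez's work on unitriangular matrices) and proves only the conditional implication that Conjecture~\ref{maxcon} implies Conjecture~\ref{unicon} (Theorem~\ref{impl}). You do correctly recognise that the argument must bottom out in the open maximal-parabolic case, and your closing sentence honestly concedes that the obstruction is unresolved; in that sense you have identified the right structure of the paper's contribution. But the proposal as written contains a substantive error in how the reduction is framed, and it is worth being precise about it.

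Your ``inductive step'' asserts that Theorem~\ref{impl} expresses $\rho_{(1^m)}(q)$ in terms of $\rho_{\mbf l^{(i)}}(q)$ where each $\mbf l^{(i)}$ lives in dimension strictly less than $m$ (or is of the form $(1^{m'})$ with $m'<m$). Neither is the case. The actual formula is $\rho_{(1^m)}(q)=\sum_{j=0}^{n}a_j\,\rho_{(j,m)}(q)$ with $n=m(m-1)/2$, so the terms are maximal parabolics $P^{(j,m)}(q)\le\GL_{j+m}(q)$ whose ambient dimension $j+m$ ranges up to $m(m+1)/2$, \emph{larger} than $m$. No smaller unitriangular case appears, so there is no recursion in $m$: the reduction is a one-shot passage from the Borel case in dimension $m$ to a family of maximal-parabolic problems in (generally) higher dimension. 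Consequently the ``strong induction on $m$'' has no inductive hypothesis to apply, and the argument collapses entirely onto proving Conjecture~\ref{maxcon} outright. Your stratification sketch for $\rho_{(a,b)}(q)$ is not carried out in the paper and, as you note, runs into the standard wild-representation-type difficulties; nothing in the paper suggests it can be pushed through in general. So the correct summary is: what can be salvaged from your outline is precisely Theorem~\ref{impl}'s conditional reduction, and the statement itself remains, as the paper says, a conjecture.
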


\begin{conj}\label{maxcon} For any positive integers $k$ and $m$, 
$\rho_{(k,m)}(q)$ is a polynomial in $q$ with rational coefficients.
\end{conj}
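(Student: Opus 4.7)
The plan is to reformulate the orbits as isomorphism classes of nilpotent $\mF_q[T]$-modules with a marked submodule, and then reduce polynomiality to statements about Hall polynomials. Identify $P^{(k,m)}(q)$ with the stabiliser in $\GL_{k+m}(\mF_q)$ of the subspace $V := \mF_q^k \subset W := \mF_q^{k+m}$. Each $x \in N^{(k,m)}(q)$ makes $W$ into a nilpotent $\mF_q[T]$-module $W_x$ (with $T$ acting as $x$) in which $V$ is a submodule, and two elements $x, x'$ lie in the same $P^{(k,m)}(q)$-orbit iff the pairs $(W_x, V)$ and $(W_{x'}, V)$ are isomorphic as module-with-submodule. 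Grouping by the Jordan type $\nu$ of the ambient module and by the types $\lambda \vdash k$ of the submodule and $\mu \vdash m$ of the quotient gives
$$
\rho_{(k,m)}(q) = \sum_{\nu \vdash k+m}\; \sum_{\lambda \vdash k,\, \mu \vdash m} \gamma\bigl(\Aut(M_\nu),\, \Gr^\nu_{\lambda,\mu}(q)\bigr),
$$
where $M_\nu$ is a fixed nilpotent $\mF_q[T]$-module of type $\nu$ and $\Gr^\nu_{\lambda,\mu}(q)$ is the finite set of submodules $N \subset M_\nu$ with $N \cong M_\lambda$ and $M_\nu/N \cong M_\mu$.

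The indexing set of triples $(\nu, \lambda, \mu)$ is finite and independent of $q$, so it suffices to prove polynomiality of each orbit count. Two ingredients are already polynomial in $q$: the cardinality $|\Gr^\nu_{\lambda,\mu}(q)|$ is the classical Hall polynomial $g^\nu_{\lambda\mu}(q) \in \mZ[q]$, and $|\Aut(M_\nu)|$ is polynomial in $q$ (its algebraic structure is a unipotent radical extended by the reductive quotient $\prod_i \GL_{m_i(\nu)}$, where $m_i(\nu)$ is the multiplicity of $i$ as a part of $\nu$). One natural attempt is to show that $\Aut(M_\nu)$ acts transitively on $\Gr^\nu_{\lambda,\mu}(q)$ whenever this set is nonempty, which would make each orbit count $0$ or $1$ and render $\rho_{(k,m)}(q)$ a constant. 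Direct computation for small cases such as $(k,m) = (1,1), (1,2), (2,2)$ supports this transitivity, but one expects it to fail for general $(k,m)$: the Hall polynomial $g^\nu_{\lambda\mu}(q)$ can have positive degree, forcing the submodule set to break up into several orbits with polynomial-order stabilisers in $\Aut(M_\nu)$.

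The general route is therefore to realise $\Gr^\nu_{\lambda,\mu}$ as a constructible subvariety of a Grassmannian, defined over $\mZ$ by rank conditions encoding the Jordan types, on which the algebraic group $\mathbf{G}_\nu$ with $\mathbf{G}_\nu(\mF_q) = \Aut(M_\nu)$ acts. If one can stratify this variety into finitely many $\mathbf{G}_\nu$-stable locally closed pieces, each a single $\mathbf{G}_\nu$-orbit with polynomial-in-$q$ stabiliser order, then Burnside's lemma yields polynomial orbit counts. An equivalent formulation uses the bijection between $\Aut(M_\nu)$-orbits on $\Gr^\nu_{\lambda,\mu}(q)$ and $\Aut(M_\lambda) \times \Aut(M_\mu)$-orbits on the middle-term-type-$\nu$ locus of $\Ext^1_{\mF_q[T]}(M_\mu, M_\lambda)$, reducing to orbits of a linear action of a group of polynomial order on a vector space cut out by Jordan-type conditions. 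The main obstacle is the combinatorics and geometry of these submodule varieties: controlling stabilisers of individual submodules under $\Aut(M_\nu)$ is sensitive to the fine structure of how the generators of $N$ sit inside $M_\nu$, and polynomiality of such stabiliser-indexed counts is precisely the kind of question that makes the more general Conjecture~\ref{gencon} (and in particular Conjecture~\ref{unicon}, i.e.~Higman's conjecture) so resistant to elementary methods.
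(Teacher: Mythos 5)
The statement you are asked to prove is labelled a conjecture, and the paper does not prove it: Theorems~\ref{pargen} and~\ref{rec} reduce $\rho_{(k,m)}(q)$, for fixed $m$, to finitely many base cases $\rho_{(j,m)}(q)$ with $j \le m(m-1)/2$, and Proposition~\ref{rhosmall} settles those for $m \le 6$, but the conjecture for general $m$ remains open. So there is no proof in the paper for your proposal to be compared against.

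Your module-theoretic reformulation is correct and standard (it underlies Murray's work \cite{murray2000}, which proves constancy for $\min(k,m)\le 5$): $P^{(k,m)}(q)$-orbits on $N^{(k,m)}(q)$ biject with isomorphism classes of pairs consisting of a nilpotent $\mF_q[T]$-module and a submodule, giving $\rho_{(k,m)}(q) = \sum_{\nu,\lambda,\mu} \gamma\bigl(\Aut(M_\nu), \Gr^\nu_{\lambda,\mu}(q)\bigr)$ as you write. But as you yourself point out, knowing that $|\Gr^\nu_{\lambda,\mu}(q)|$ is a Hall polynomial and that $|\Aut(M_\nu)|$ is polynomial in $q$ does not give polynomiality of the orbit count: an orbit-counting argument requires uniform control of stabiliser orders across $q$, and the appeal to a stratification of the submodule variety into $\mathbf{G}_\nu$-stable orbits with polynomial stabilisers is precisely a restatement of the thing to be proved, not an argument for it. The proposal therefore contains a genuine gap: it relocates the conjecture rather than resolving it, and your closing paragraph effectively concedes this. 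It is a reasonable reframing of the problem, and it is consistent with what is known (the small cases you check, and Murray's results), but it does not constitute a proof; nor does the paper supply one, so the honest conclusion is that the statement remains a conjecture.
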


It is not difficult to show that Conjecture \ref{unicon} is equivalent to the conjecture that the number of conjugacy classes of the group of upper unitriangular $n\times n$ matrices is a polynomial in $q$ with rational coefficients (see~\cite[Section 4.2]{thesis}). This last conjecture has been proved for $n\le 13$ by J.M. Arregi and A. Vera-L\'opez~\cite{pol1992, 
pol1995, pol2003}. Conjecture~\ref{maxcon} has been proved for 
$k\le 5$ (or, alternatively, for $m \le 5$); indeed, 
S.H. Murray~\cite{murray2000} proved that in those cases $\rho_{(k,m)}(q)$ 
does not depend on $q$.

A \emph{partition} is a (possibly, empty) non-increasing sequence of positive integers. 
If $\lambda=(\lambda_1,\dots,\lambda_r)$ is a partition, let 
$|\lambda|=\lambda_1+\cdots+\lambda_r$, and write $l(\lda)=r$. 
If $k,m\in \mN$, let 
$\Cal{P}_k$ be the set of all partitions $\lambda$ with $|\lda|=k$, let
$\Cal{P}^m$ be the set of partitions $\lambda$ such that $\lambda_i\le m$ 
for all $i$, and let  
$\Cal{P}_k^m= \Cal P_k \cap \Cal P^m$.
Let $p(k)=|\Cal{P}_k|$.

Let $m\in \mN$, and let 
$\lda=(\lda_1,\ldots,\lda_r)$ be a partition such that $\lda_1\le m$. 
Define
$$
\delta(m,\lambda)=(\lambda_r,\lambda_{r-1}-\lambda_r,
\lambda_{r-2}-\lambda_{r-1},\dots,\lambda_1-\lambda_2,m-\lda_1).
$$ 
Let $\nu^m_{\lda}(q)=\rho_{\delta(m,\lda)}(q)$. Thus,
$\nu^m_{\lda}(q)$ is the number of $P^{\mbf l}(q)$-orbits of $N^{\mbf l}(q)$ 
where $P^{\mbf l}(q)$ is the stabiliser of a   flag
$$
\mF_q^{\lda_r} \le \mF_q^{\lda_{r-1}} \le \cdots \le \mF_q^{\lda_1} \le \mF_q^m.
$$

We shall generalise the definition of $\nu_{\lda}^m (q)$ as follows.
Let $\overline{\mF}_q$ be the algebraic closure of the field $\mF_q$, 
and let $F$ be a subset of $\ol{\mF}_q$. Let $\Cal Y=\Cal Y_F$ be the    
family of all linear endomorphisms $T:U\ra U$ (where $U$ is an 
arbitrary finite dimensional vector space over $\mF_q$) 
such that all eigenvalues of $T$ 
over $\overline{\mF}_q$ are in $F$. 
We shall say that $\Cal Y$ is the \emph{class} 
of endomorphisms associated with $F$. 
We shall refer to elements of this class as \emph{$\Cal Y$-endomorphisms}.
Obviously, $\Cal Y$ is preserved by conjugation. 
Note that the family of all nilpotent endomorphisms and the 
family of all invertible endomorphisms are both classes. 
If $\Cal Y$ is a class and $m\in\mZ_{\ge 0}$, let $c(m, \Cal Y)$ 
be the number of $\GL_m (\mF_q)$-orbits on $\Cal Y\cap \M_m (\mF_q)$. 
We shall denote by $\Cal N$ the class of all nilpotent endomorphisms.
Note that 
$$
c(m,\Cal N)=p(m),
$$ 
the number of partitions of $m$. This follows
from the fact that nilpotent matrices in Jordan canonical form form a 
complete set of representatives of $\GL_m (q)$-orbits on $\N_m(q)$, the set
of nilpotent $m\times m$ matrices.

Let $\Cal Y$ be a class (of endomorphisms), and let $m\in \mN$. Let 
$\lda=(\lda_1,\ldots,\lda_r) \in \Cal P^m$ (that is, $\lda_1\le m$). 
Define 
$$
\kappa_{\lda}^m (\Cal Y)=
\gamma(P^{\delta (m,\lda)}(q),M^{\delta(m,\lda)}(q)\cap \Cal Y).
$$ 
One of the main results of this paper is as follows.
\begin{thm}\label{pargen} Let $k$ and $m$ be positive integers. 
Let $q$ be a prime power and $\Cal Y$ be a class of endomorphisms over $\mF_q$.
Then
$$\kappa_{(k)}^{k+m}(\Cal Y)=
\sum_{j=0}^{k} c(k-j, \Cal Y) \sum_{\lda\in \Cal P_j^m} \kappa_{\lda}^m (\Cal Y).
$$
In particular,
$$\rho_{(k,m)}(q)=
\sum_{j=0}^{k} p(k-j)\sum_{\lambda\in \Cal P_j^m} \nu_{\lda}^m (q).
$$
\end{thm}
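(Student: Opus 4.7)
My plan is to reinterpret both sides of the identity module-theoretically and then bijectify. A matrix in $\M_n(q) \cap \Cal Y$ is exactly an $\mF_q[t]$-module structure on $\mF_q^n$ whose eigenvalues all lie in $F$, and $\GL_n(q)$-conjugacy of such matrices is the same as isomorphism of the associated modules. Hence $c(k-j,\Cal Y)$ counts isomorphism classes of $\Cal Y$-modules of dimension $k-j$. Similarly, $\kappa_\lda^m(\Cal Y)$ counts isomorphism classes of pairs $(W,W_\bullet)$ where $W$ is an $m$-dimensional $\Cal Y$-module and $W_\bullet = (W_1 \supseteq \cdots \supseteq W_r)$ is a chain of submodules with $\dim W_i = \lda_i$ (an equality $\lda_i = \lda_{i+1}$ meaning $W_i = W_{i+1}$), while $\kappa_{(k)}^{k+m}(\Cal Y)$ is the number of isomorphism classes of pairs $(V,U)$ with $V$ a $(k+m)$-dimensional $\Cal Y$-module and $U$ a $k$-dimensional submodule. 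Under this dictionary the right-hand side becomes the number of isomorphism classes of triples $(X,W,W_\bullet)$ with $\dim W = m$ and $\dim X + \sum_i \dim W_i = k$.

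The task is therefore to produce a natural bijection between isomorphism classes of pairs $(V,U)$ and isomorphism classes of such triples $(X,W,W_\bullet)$. I would stratify both sides according to the isomorphism class of $W = V/U$, so that the theorem reduces to showing, for each $[W]$,
$$\sum_{[U]} \bigl|(\Aut U \times \Aut W)\backslash \Ext^1(W,U)\bigr| \;=\; \sum_{[X]} \bigl|\Aut(W)\backslash\{\text{chains in $W$ of total dimension $k - \dim X$}\}\bigr|,$$
with the left-hand side enumerating pairs $(V,U)$ with $V/U \cong W$ via the standard $\Ext^1$-description of unipotent-radical orbits. Given $(V,U)$, I would apply Krull--Schmidt to isolate the (up-to-isomorphism unique) maximal direct summand $X$ of $V$ contained in $U$; writing $V = X \oplus V'$ and $U = X \oplus U'$ yields a \emph{reduced} pair $(V',U')$ in which no direct summand of $V'$ lies in $U'$. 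In the reverse direction, starting from a chain $W_\bullet$, one builds a reduced pair by an explicit construction: $V'$ is a direct sum over the terms of the chain with a $t$-action whose off-diagonal entries reflect the inclusions $W_{i+1} \subseteq W_i$.

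The main obstacle is showing that reduced pairs $(V',U')$ with $V'/U' \cong W$ are classified by $\Aut(W)$-orbits of chains in $W$ of total dimension $\dim U'$. Going from a chain to a reduced pair is the straightforward direction. Extracting the canonical chain from a given reduced pair is subtler and is where I expect the genuine work to be concentrated: one would need to identify a correct filtration of $V'$ (a natural candidate is the relative filtration $U' \subseteq t^{-1}(U') \subseteq t^{-2}(U') \subseteq \cdots$, where $t^{-i}(U') = \{v \in V' : t^i v \in U'\}$, or a socle-type filtration of $V'$ tied to $U'$), project it into $W$, and verify both that the resulting chain has total dimension exactly $\dim U'$---the reducedness hypothesis being exactly what should force this---and that it is a complete invariant of the isomorphism class of the reduced pair. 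The ``in particular'' nilpotent case then follows immediately by taking $F = \{0\}$, in which case $c(k-j,\Cal N) = p(k-j)$ and $\kappa_\lda^m(\Cal N) = \nu_\lda^m(q)$.
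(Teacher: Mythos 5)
Your module-theoretic translation of both sides is correct, and the idea of stratifying by the isomorphism class of $W=V/U$ and splitting off the largest direct summand of $V$ inside $U$ via Krull--Schmidt is a reasonable reduction (though even there, you would need to verify that the isomorphism class of the reduced pair $(V',U')$ is independent of the choice of complement, which is not completely automatic). However, you have explicitly left open the central claim --- that reduced pairs $(V',U')$ with $V'/U'\cong W$ and $\dim U'=j$ are classified by $\Aut(W)$-orbits of chains in $W$ of total dimension $j$ --- and this is precisely where the proposal breaks down, not just where ``the genuine work is concentrated.'' The candidate filtration you offer, $U'\subseteq t^{-1}(U')\subseteq t^{-2}(U')\subseteq\cdots$, does not produce a chain of the right total dimension. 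Take $\Cal Y=\Cal N$, $W=\mF_q[t]/(t^2)$, $V'=\mF_q[t]/(t^4)$, $U'=t^2V'$: then $\dim U'=2$, but the images $t^{-i}(U')/U'$ in $W$ are $0\subsetneq tW\subsetneq W$, giving total dimension $3$, not $2$. Some other canonical chain would have to be found, and it is far from clear one exists.

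It is worth pointing out that the paper deliberately sidesteps the need for any explicit correspondence. Its key input (Proposition~\ref{link}, via Lemma~\ref{duality}) is an \emph{orbit-counting} identity: for a finite group $H$ acting linearly, the numbers of orbits on a space and on its dual agree, and this is used to replace $\gamma\bigl(H,\Ext^1(W,U)\bigr)$ by $\gamma\bigl(H,\Hom_{\mF_q[t]}(U,W)\bigr)$ with no bijection between the two sets. The chain $W_1\supseteq W_2\supseteq\cdots$ then arises iteratively from images of module homomorphisms $U\to W$, $\ker S_1\to W_1$, etc.\ (Lemma~\ref{indlem}), again as a counting argument, not a classification of pairs. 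In contrast, your route insists on a bijection, which is strictly stronger than what is needed and strictly harder than what the paper proves; the Remark following the theorem cites Murray's bijective proof in the invertible case as a separate and nontrivial piece of work. So while your framework is sound, the missing lemma is the whole theorem in disguise, and the proposed filtration is not a correct candidate for it.
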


\begin{remark} S.H. Murray~\cite{murray2006} has proved a similar result stated in terms of irreducible representations of parabolic subgroups. 
This result implies Theorem~\ref{pargen} in the case when $\Cal Y$ is the class of all invertible matrices.
The proof in~\cite{murray2006} is different from the one given here.
 Unlike the proof in this paper,
 the proof in~\cite{murray2006} establishes not just a numerical equality, 
but also an explicit correspondence between representations. 
It is an interesting question whether the 
method of~\cite{murray2006} can be extended to prove an analogue of the more 
general Theorem~\ref{gensq}. 
\end{remark}

We will deduce the following two results from Theorem \ref{pargen}.
\begin{thm}\label{impl} Let $m\in \mN$, and let $n=m(m-1)/2$. 
There exist integers 
$a_0,a_1,\dots,a_n$ such that, for all prime powers $q$,
$$\rho_{(1^m)}(q)=\sum_{j=0}^n a_j \rho_{(j,m)}(q).$$
Hence, Conjecture~\ref{maxcon} implies Conjecture~\ref{unicon}.
\end{thm}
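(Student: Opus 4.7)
The plan is first to identify $\rho_{(1^m)}(q) = \nu_{\lambda^*}^m(q)$, where $\lambda^* := (m-1, m-2, \ldots, 1)$ is the staircase partition (so $\delta(m, \lambda^*) = (1^m)$ and $|\lambda^*| = n$). Then apply Theorem~\ref{pargen} with this $m$ for each $k = 0, 1, \ldots, n$; writing $V_j(q) := \sum_{\lambda \in \Cal P_j^m} \nu_\lambda^m(q)$, this produces the system
$$\rho_{(k,m)}(q) = \sum_{j=0}^{k} p(k-j)\, V_j(q), \qquad k = 0, 1, \ldots, n.$$
Since $p(0) = 1$, the coefficient matrix is unipotent lower triangular over $\mathbb{Z}$, so inverting it expresses each $V_j(q)$ as an explicit integer linear combination of $\rho_{(0,m)}, \ldots, \rho_{(j,m)}$.

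Next, I exploit two combinatorial reductions. (i) If $\lambda \in \Cal P^m$ has $\lambda_i = \lambda_{i+1}$ for some $i$, then $\delta(m, \lambda)$ has a zero entry and $\nu_\lambda^m = \nu_{\lambda'}^m$, where $\lambda'$ is obtained by deleting one of the repeated parts. (ii) If $\lambda_1 = m$, then the last entry of $\delta(m, \lambda)$ vanishes and $\nu_\lambda^m = \nu_{(\lambda_2, \ldots, \lambda_r)}^m$. Iterating these, every $\nu_\lambda^m$ equals $\nu_\mu^m$ for a unique \emph{essential} $\mu$: a strict partition with $\mu_1 < m$, corresponding bijectively to a subset of $\{1, \ldots, m-1\}$. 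Crucially, $\lambda^*$ itself is essential, and it is the \emph{unique} essential partition of size $n$ (it uses all of $\{1, \ldots, m-1\}$ as its parts). Re-expressing $V_k = \sum_{\mu\text{ essential}} a_{k,\mu}\, \nu_\mu^m$ with nonnegative integer coefficients, we see that $\nu_{\lambda^*}^m$ appears only in $V_n$, with coefficient exactly $1$.

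Now show by strong induction on $|\mu|$ that every essential $\nu_\mu^m$ with $\mu \ne \lambda^*$ is an integer combination of the $V_j$'s (and hence of the $\rho_{(k,m)}$'s). The base case $\mu = \emptyset$ is $\nu_\emptyset^m = p(m) = V_0$. The inductive step uses the equation for $V_{|\mu|}$ together with the transpose duality $\nu_\mu^m = \nu_{\mu^\vee}^m$ --- where $\mu^\vee := (m - \mu_r, \ldots, m - \mu_1)$, obtained by conjugating $P^{\delta(m,\mu)}$ by the longest Weyl element --- to disentangle the several essential partitions of a given size. Consequently $\nu_{\lambda^*}^m(q) = V_n(q) - \sum_{\mu \ne \lambda^*} a_{n,\mu}\, \nu_\mu^m(q)$ emerges as a $\mathbb{Z}$-linear combination of $V_j$'s, hence of $\rho_{(k,m)}$'s, which is the desired identity. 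The implication Conjecture~\ref{maxcon}$\,\Rightarrow\,$Conjecture~\ref{unicon} follows at once, since integer combinations of rational polynomials are rational polynomials.

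The hard part will be the inductive step of the preceding paragraph: at a given size $k$, the single equation $V_k$ relates multiple essential partitions of size $k$, and one must verify that combining this equation with the transpose duality and the inductive hypothesis genuinely pins down each $\nu_\mu^m$ as an explicit integer combination of the $V_j$'s. Small cases ($m = 3, 4$) can be checked by direct hand calculation, but a careful combinatorial bookkeeping of how essential partitions distribute across the various $V_k$'s via reductions and duality is required to make the general argument go through.
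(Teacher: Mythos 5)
Your setup is sound and agrees with the paper's framework: the identification $\rho_{(1^m)}(q)=\nu_{\lambda^*}^m(q)$ with $\lambda^*=(m-1,\dots,1)$, the unipotent lower-triangular system $\rho_{(k,m)}(q)=\sum_{j=0}^{k}p(k-j)V_j(q)$ giving $V_j$ as $\mathbb Z$-combinations of the $\rho_{(i,m)}$, the two reductions (which are the paper's identities~\eqref{coll1} and~\eqref{coll2}), and the observation that each $V_k$ collapses to a nonnegative-integer combination of $\nu_\mu^m$ over essential $\mu$ (subsets $S\subseteq[1,m-1]$), with $\nu_{\lambda^*}^m$ first appearing, with coefficient $1$, in $V_n$. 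This is precisely the paper's $\phi_k(G,A)$ in the special case $G=\GL_m(q)$, $A=\N_m(q)$.

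The gap is in the proposed inductive step. You want to show that \emph{every} essential $\nu_\mu^m$ with $\mu\ne\lambda^*$ is a $\mathbb Z$-combination of $V_0,\dots,V_n$, using the $V_k$-equations together with the duality $\nu_S=\nu_{m-S}$. But this system is underdetermined for $m$ large. There are $2^{m-1}$ essential partitions; identifying dual pairs $S\leftrightarrow m-S$ leaves $(2^{m-1}+\#\{\text{self-dual }S\})/2$ genuinely distinct unknowns, while you only have the $n+1=m(m-1)/2+1$ equations $V_0,\dots,V_n$ (whose coefficient matrix has rank at most $n+1$). Already for $m=6$ this gives $20$ unknowns against $16$ equations, so the induction cannot close: at some size $k$ you will encounter essential partitions of size $k$ that the available relations simply do not disentangle. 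Your hand-checks at $m=3,4$ succeed only because those sizes happen to have $2^{m-1}\le n+1$ (resp.\ are rescued by duality), which is an artifact of small $m$.

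What actually closes the argument is a more subtle combinatorial fact that does \emph{not} require recovering the individual $\nu_\mu^m$. The paper passes from $\phi_k$ to $\psi_k=\phi_k-\phi_{k-m}$, whose coefficient vector is $(r(k,S))_{S\subseteq[1,m-1]}$, and then invokes the Appendix's Proposition~\ref{dj}: there exist integers $d_1,\dots,d_n$ (built from the coefficients of $\prod_{i=1}^{m-1}(1-X^i)$) with
$$\sum_{j=1}^{n}d_j\,r(j,S)=\begin{cases}1&\text{if }S=[1,m-1],\\0&\text{if }S\subsetneq[1,m-1].\end{cases}$$
Thus the single combination $\sum_j d_j\psi_j$ is \emph{exactly} $\xi_{[1,m-1]}=\nu_{\lambda^*}^m$, killing every other $\xi_S$ simultaneously rather than solving for them one at a time. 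Since each $\psi_j$ is itself a $\mathbb Z$-combination of the $\gamma(G^{(i)},A^{(i)})=\rho_{(i,m)}(q)$ (Corollary~\ref{exppsi}), the theorem follows. So the missing ingredient in your argument is precisely this generating-function identity --- the special vector $(d_j)$ picking out the corner $S=[1,m-1]$ --- and without something of that kind the induction cannot be carried through for $m\ge 6$.
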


If $\mbf l=(l_1,\ldots,l_s)$, write $\rho_{k,\mbf l}(q)$ for
$\rho_{(k,l_1,\ldots,l_s)}(q)$.

\begin{thm}\label{rec} Let $k, m\in \mN$, and let $n=m(m-1)/2$. 
There exist integers $a_{k0},\ldots,a_{kn}$ such that, for all
 tuples $\mbf l=(l_1,\ldots,l_s)$ of nonnegative integers with 
$l_1+\cdots+l_s=m$,
$$
\rho_{k,\mbf l} (q) = \sum_{j=0}^n a_{kj} \rho_{j,\mbf l}(q).
$$
\end{thm}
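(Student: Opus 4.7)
The approach is to run the argument that yields Theorem~\ref{impl} in a ``relative'' setting, carrying the refined flag $\mbf l$ on the top block throughout. The point of departure is Theorem~\ref{pargen} applied with $\Cal Y=\Cal N$, which produces the triangular system $\rho_{(k,m)}(q)=\sum_{j=0}^k p(k-j)\,T_j(q)$ with $T_j(q)=\sum_{\lda\in\Cal P_j^m}\nu_\lda^m(q)$. Because the matrix $[p(k-j)]$ is unit lower triangular, inversion over $\mZ$ is possible, and (as in Theorem~\ref{impl}) one extracts integer combinations of the $\rho_{(k,m)}(q)$'s expressing chosen targets such as $\rho_{(1^m)}(q)$. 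I would repeat this extraction with $\mbf l$ present throughout, so that the integer coefficients depend only on $k$ and $m$, not on $\mbf l$.

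Step 1: prove a relative version of Theorem~\ref{pargen}. For a composition $\mbf l=(l_1,\dots,l_s)$ with $l_1+\cdots+l_s=m$ and any $k$, I would establish an identity
$$
\rho_{k,\mbf l}(q)=\sum_{j=0}^k p(k-j)\,T_j^{\mbf l}(q),
$$
where each $T_j^{\mbf l}(q)$ is a sum of orbit counts $\nu_\lda^{\mbf l}(q)$, $\lda\in\Cal P_j^m$, for the parabolic obtained by inserting the $\lda$-subflag below the $\mbf l$-flag in $\mF_q^{k+m}$. The proof should parallel that of Theorem~\ref{pargen}: decompose a nilpotent in $N^{(k,\mbf l)}(q)$ according to its $k\times k$ diagonal block (whose $\GL_k(q)$-conjugacy is enumerated by $p(k-j)=c(k-j,\Cal N)$, independently of $\mbf l$) and according to its $m\times m$ diagonal block, now acted on by $P^{\mbf l}(q)$ rather than $\GL_m(q)$, with the off-diagonal $k\times m$ block treated as in the absolute case. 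The combinatorics behind the factor $p(k-j)$ is a purely ``bottom-block'' statement and is therefore insensitive to the refinement on top. This relative identity is plausibly a special case of the more general Theorem~\ref{gensq} foreshadowed in the introduction.

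Step 2: invert the system exactly as in Theorem~\ref{impl}. Since $[p(k-j)]$ is unit lower triangular and does not involve $\mbf l$, the same $\mZ$-linear manipulations that produce Theorem~\ref{impl} yield integers $a_{k0},\dots,a_{kn}$, depending only on $k$ and $m$, for which
$$
\rho_{k,\mbf l}(q)=\sum_{j=0}^n a_{kj}\,\rho_{j,\mbf l}(q)
$$
holds for every composition $\mbf l$ of $m$.

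The main obstacle is Step 1: formulating $\nu_\lda^{\mbf l}(q)$ correctly and checking that the proof of Theorem~\ref{pargen} survives the replacement $\GL_m(q)\rightsquigarrow P^{\mbf l}(q)$ on the top block. Once that relative formula is in place, Step 2 is essentially a mechanical replay of the inversion used in Theorem~\ref{impl}, with the $\mbf l$-independence of the resulting integer coefficients $a_{kj}$ falling out automatically from the $\mbf l$-independence of the matrix $[p(k-j)]$.
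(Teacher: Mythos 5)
Your Step 1 is essentially correct and matches the paper: the ``relative'' version of Theorem~\ref{pargen} you want is exactly Theorem~\ref{gensq} applied with $G = P^{\mbf l}(q)$ and $A = N^{\mbf l}(q)$, and the identification $\rho_{k,\mbf l}(q) = \gamma(G^{(k)},A^{(k)}_{\Cal N})$ is recorded as equation~\eqref{rhoquiver}. This part of the plan is sound.

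Step 2, however, has a genuine gap. You assert that because $[p(k-j)]$ is unit lower triangular and $\mbf l$-independent, ``the same $\mZ$-linear manipulations that produce Theorem~\ref{impl}'' yield the desired integers $a_{k0},\dots,a_{kn}$. But unit-lower-triangularity only lets you invert the system to express each $T_j^{\mbf l}(q)$ as an integer combination of $\rho_{0,\mbf l}(q),\dots,\rho_{j,\mbf l}(q)$; it says nothing about why, for $k>n$, the quantity $\rho_{k,\mbf l}(q)$ can be rewritten using only the first $n+1$ values. That bound $n=m(m-1)/2$ is the entire content of the theorem, and it does not ``fall out automatically.'' What the paper actually uses is a nontrivial combinatorial fact from the Appendix (Corollary~\ref{rrank}): the numbers $r(k,S)$, for $S\subseteq[1,m-1]$, satisfy a linear recurrence of depth $n$ with integer coefficients $c_j$ that are independent of $S$. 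To bring this to bear, one first collapses the partition sums $T_j^{\mbf l}$ using the identities $\xi_\lda=\xi_{\bar\lda}$ and $\xi_{S\cup\{m\}}=\xi_S$ (equations~\eqref{coll1}, \eqref{coll2}), then telescopes $\phi_k-\phi_{k-m}$ to obtain $\psi_k=\sum_{S\subseteq[1,m-1]}r(k,S)\xi_S$ (equation~\eqref{psi}), and only then applies the $r(k,S)$ recurrence together with the two directions of Corollary~\ref{exppsi}. None of this appears in your proposal. Moreover, Theorem~\ref{impl} is not the right template to ``replay'': its inversion uses a different Appendix result (Proposition~\ref{dj}, producing one fixed linear combination via the $d_j$'s), whereas Theorem~\ref{rec} relies on Proposition~\ref{expgen}, which rests on the recurrence via the $c_j$'s. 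You should state and use that recurrence explicitly; without it the existence of the bound $n$ and the $\mbf l$-independence of the coefficients is unsupported.
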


Using the methods developed in the proof of Theorem~\ref{pargen}, 
one may compute $\rho_{\mbf l}(q)$ for all tuples $\mbf l=(l_1,\ldots,l_s)$ 
with $l_1+\cdots+l_s\le 6$.
In particular, the following holds.

\begin{prop}\label{rhosmall} Let $\mbf l=(l_1,\ldots,l_s)$ be a tuple of nonnegative integers with $l_1+\cdots+l_s\le 6$. 
Then $\rho_{\mbf l}(q)$ is a polynomial in $q$ with positive 
integer coefficients. 
Hence (by Theorem~\ref{pargen}) $\rho_{(k,m)}(q)$ is polynomial 
in $q$ whenever $m\le 6$ and $k\in \mN$.
\end{prop}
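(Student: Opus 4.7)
The plan is to verify both assertions by applying the recursive machinery underlying the proof of Theorem~\ref{pargen} to each of the finitely many tuples $\mbf l = (l_1,\ldots,l_s)$ with $l_1+\cdots+l_s \le 6$, and to observe polynomiality and positivity of coefficients in each case.

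I would proceed by induction on $l_1+\cdots+l_s$, proving the stronger claim that $\gamma(P^{\mbf l}(q), M^{\mbf l}(q) \cap \Cal Y)$ is polynomial in $q$ for every class $\Cal Y$ of endomorphisms. The base case $s=1$ is the classical statement that $c(m,\Cal Y)$ is polynomial in $q$, via the rational canonical form. For the inductive step, given $\mbf l$ with $s\ge 2$, I apply the strategy behind Theorem~\ref{pargen}: split $\mbf l$ as $(l_1, \mbf l')$ with $\mbf l' = (l_2,\ldots,l_s)$, stratify the matrices in $M^{\mbf l}(q) \cap \Cal Y$ according to the $\GL_{l_1}(q)$-conjugacy class of the top-left $l_1 \times l_1$ diagonal block, and identify each stratum with the set of $P^{\mbf l'}(q)$-orbits on $M^{\mbf l'}(q) \cap \Cal Y'$ for a derived class $\Cal Y'$ (determined by the eigenvalues of the chosen top-left block relative to the constraints on $\Cal Y$). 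Since $\mbf l'$ has sum strictly less than that of $\mbf l$, the inductive hypothesis yields polynomiality in $q$ for each term in the resulting sum, and the conclusion follows.

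The second assertion of the proposition is then immediate from Theorem~\ref{pargen}: for $m\le 6$, each $\nu_\lda^m(q) = \rho_{\delta(m,\lda)}(q)$ is polynomial in $q$ by the first part of the proposition (the tuple $\delta(m,\lda)$ has sum $m\le 6$), so $\rho_{(k,m)}(q)$ is a $\mZ_{\ge 0}$-combination of polynomials in $q$, hence polynomial in $q$.

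The main obstacle is the positivity of integer coefficients in the first part. The recursive formula produces only a $\mZ$-linear (not a $\mZ_{\ge 0}$-linear) combination of polynomials in $q$, since factors such as $c(1,\text{invertibles}) = q-1$ enter with mixed-sign contributions to individual coefficients. Positivity of the final polynomial must therefore be verified directly from the explicit expression obtained in each of the finitely many cases $l_1+\cdots+l_s\le 6$. A conceptual explanation—for example, a canonical stratification of $N^{\mbf l}(q)$ into $P^{\mbf l}(q)$-stable cells each contributing a $q$-monomial orbit count—would be more satisfying, but a careful case-by-case calculation suffices to establish the claim.
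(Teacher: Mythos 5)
Your inductive step has a genuine gap. After stratifying by the $\GL_{l_1}(q)$-conjugacy class of the top-left block $A$, the residual problem is \emph{not} an orbit count of the form $\gamma(P^{\mbf l'}(q),\, M^{\mbf l'}(q)\cap\Cal Y')$. Writing a matrix in $M^{\mbf l}(q)$ as $\bigl(\begin{smallmatrix} A & B \\ 0 & D\end{smallmatrix}\bigr)$, the stabiliser of $A$ in $P^{\mbf l}(q)$ acts on the pair $(B,D)$, and the $B$-block cannot be absorbed or discarded: $g\cdot(B,D) = \bigl((g_{11}B - A g_{12} + g_{12}D)\,g_{22}^{-1},\; g_{22}D g_{22}^{-1}\bigr)$, so the orbit structure depends on the interplay between $A$, $D$ and $B$. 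When $\Cal Y=\Cal N$ this is unavoidable (all eigenvalues of $A$ and $D$ coincide, so no Sylvester-type vanishing rescues you). This is precisely the difficulty that Proposition~\ref{link} resolves via duality, replacing the $B$-space quotient $\Hom(U_b,U_a)/D$ by the intertwiner space $\Cal I(X_a,X_b)$; you never invoke this. Without it, your recursion does not close. A second problem: once the duality trick is applied, the reduced problems are orbit counts for flags (the $\xi_{\lda}$ of Section~\ref{indarg}), not single tuples $\mbf l'$, and after a few steps one leaves the class of flag-stabilisers altogether. The paper handles this by passing to preordered sets (Section~\ref{presets}), and even that enlargement is not quite closed: the computation for $\Delta_2$ in Example~\ref{Delta2ex} requires an ad~hoc direct argument because one of the flag representatives does not yield a preordered-set problem. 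Your proposal does not anticipate either enlargement. Finally, the base case assertion that $c(m,\Cal Y)$ is polynomial in $q$ for every class $\Cal Y$ is not justified; it depends on the family of classes as $q$ varies, though for $\Cal Y=\Cal N$ (the only case you actually need, since $\Cal Y'=\Cal N$ would persist in your recursion) it is the constant $p(m)$.

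The paper's actual route to Proposition~\ref{rhosmall} is the preordered-set recursion of Proposition~\ref{preord} and Corollary~\ref{preordcor}, applied case by case for $l_1+\cdots+l_s\le 6$, with the one exceptional preordered set $\Delta_2$ computed directly; the resulting table at the end of Section~\ref{dualrep} exhibits polynomiality and positivity by inspection. Your closing remark, that positivity cannot be expected to follow conceptually from the recursion and must be read off the final answers, is correct and agrees with the paper; the substantive issue is that the recursion you describe does not in fact terminate within the stated class of orbit-counting problems.
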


This paper is organised as follows. In Section~\ref{quivers} 
we describe a general framework of quiver representations 
and their endomorphisms, which is used to state and prove the results. 
In Section~\ref{rednil} we show, in particular, how the numbers 
$\gamma(P^{\mbf l}(q),P^{\mbf l}(q))$ and $\gamma(P^{\mbf l}(q),M^{\mbf l}(q))$ 
may be expressed in terms of $\rho_{\mbf l'}(q^d)$ where $d\in \mN$ and 
$\mbf l'$ is another tuple of nonnegative integers.  This justifies our focus 
on the numbers $\rho_{\mbf l}(q)$. 

Section~\ref{parprem} contains a few standard results used later. 
In Section~\ref{dualact} we prove results that serve as the main tools 
allowing us to reduce problems such as that of counting $\rho_{(k,m)}(q)$ to 
other problems in a smaller dimension. In Section~\ref{indarg} we use
those tools to prove a generalisation of Theorem~\ref{pargen} stated in terms
 of quiver representations (Theorem~\ref{gensq}). 

In Section~\ref{inv} we invert the formulae in Theorems~\ref{pargen} 
and~\ref{gensq} and deduce Theorems~\ref{impl} and~\ref{rec}. 
This relies on combinatorial results proved jointly with G.~Wellen in the 
Appendix. I am very grateful to George Wellen for his part in this work.

In Section~\ref{presets} we describe a method for computing $\rho_{\mbf l}(q)$ 
when $m=l_1+\cdots+l_s$ is small. For this purpose, we generalise our problem
 to that of counting conjugacy classes of groups associated with preordered 
sets. Finally, Section~\ref{dualrep} investigates the symmetry afforded by 
considering a dual quiver representation. In particular, we show that
$$
\rho_{(l_1,\ldots,l_s)}(q)=\rho_{(l_s,\ldots,l_1)}(q).
$$

\textbf{Acknowledgments.} Most of this paper is a part of my D.Phil. thesis. I am very grateful to my supervisor, Marcus du Sautoy, and to George Wellen for
his part in this work. I would also like to thank my thesis examiners, Dan Segal and Gerhard R\"ohrle, for spotting a number of errors and for helpful comments.

\vspace{0.8cm} 
\textit{Notation and definitions}
\nopagebreak

\begin{itemize}
\item $\gamma(G,X)$ is the number of $G$-orbits on a finite set $X$ where the 
action of a group $G$ on $X$ is understood;
\item $[k,n]=\{k,k+1,k+2,\ldots,n\}$ where $k \le n$ are integers;
\item $|X|$ is the cardinality of a set $X$;
\item $\Cal A$ is a \emph{partition of a set} $X$ if $\Cal A$ is a family 
of disjoint sets whose union is $X$; two elements $x$ and $y$ of $X$ are 
said to be $\Cal A$-\emph{equivalent} if there exists $A\in \Cal A$ such that
$x,y\in A$;
\item $\delta_{ij}=0$ if $i\ne j$, and $\delta_{ii}=1$;
\item $A\sqcup B$ is the disjoint union of sets $A$ and $B$ 
(formally defined as $A\times \{0\} \cup B\times \{1\}$);
\item $I_V=I$ is the identity element of $\GL(V)$;
\item $I_k$ is the identity $k\times k$ matrix over 
an appropriate field.
\item Suppose $U$ and $V$ are vector spaces, $X\in \End(U)$ and $Y\in \End(V)$;
then $\Cal I(X,Y)$ denotes the vector space 
of all linear maps $T:U\ra V$ such that $TX=YT$;  
\item $\N_n (K)$ is the set of all nilpotent matrices in $\M_{n,n}(K)$;
\item 
$\End(V;U_1,\ldots,U_k)=\{ f\in \End(V): f(U_i)\subseteq U_i \; \forall i \}$ 
where $U_i$ are subspaces of $V$;
\item $\scr P(V;U_1,\ldots,U_k):=\End(V;U_1,\ldots,U_k) \cap \GL(V)$;
\item By convention, the set of $0\times k$ matrices contains just one element
 (which is nilpotent if $k=0$), and the group $\GL_{0}(K)$ is trivial;
\item If $I$ and $J$ are finite sets, then $\M_{I,J}(K)$ is the set of 
all matrices over a field $K$ whose rows are indexed by the elements of $I$ and whose
columns are indexed by elements of $J$; we refer to these
 as \emph{$I\times J$ matrices}; 
note that, if $A\in \M_{I,J}(K)$ and $B\in \M_{J,J'}(K)$, then the 
product $AB\in \M_{I,J'}(K)$ is well defined;
\item $A^{t}$ is the transpose of a matrix $A$;
\item $\Tr (A)$ is the trace of a square matrix $A$; 
\item If a group $G$ acts on a set $X$, two elements of $X$ are said to be 
$G$-\emph{conjugate} if they are in the same $G$-orbit;
\item All \emph{rings} are understood to have an identity element;
\item If $R$ is a ring, then $R^{\op}$ is the ring with the same underlying
abelian group and with the multiplication $(r,s)\mapsto sr$;
\end{itemize}

\section{Quiver representations and automorphisms}\label{quivers}

We recall the standard definitions related to quivers (see~\cite{ARS}, for example). A \emph{quiver} is a
pair $(E_0, E_1)$ of finite sets together with maps $\sigma:E_1\ra E_0$ and
$\tau: E_1 \ra E_0$. Elements of $E_0$ may be thought of as nodes; then each
element $e\in E_1$ may be represented as an arrow from $\sigma(e)$ to $\tau(e)$.
Let $K$ be a field. A \emph{representation} of a quiver $(E_0,E_1)$ over $K$ 
is a 
pair $(\mbf U, \bs\alpha)$ such that
\begin{enumerate}[(i)]
\item $\mbf U=(U_{a})_{a\in E_0}$ is a tuple of vector spaces over $K$;
\item $\bs\alpha=(\alpha_e)_{e\in E_1}$ where 
$\alpha_e\in \Hom(U_{\sigma(e)},U_{\tau(e)})$.
\end{enumerate}
If $(\mbf U,\bs\alpha)$ is a representation of a quiver $(E_0,E_1)$, we shall 
refer to the quadruple $Q=(E_0,E_1,\mbf U,\bs\alpha)$ as a \emph{quiver representation}. A quiver representation may be thought of as a collection of vector
spaces together with linear maps between some of those spaces.

If $(\mbf U,\bs\alpha)$ and $(\mbf U',\bs\alpha')$ are two representations 
of a quiver $(E_0,E_1)$, a \emph{morphism} between those representations 
is a tuple $(X_a)_{a\in E_0}$ such that
\begin{enumerate}[(i)]
\item $X_a\in \Hom(U_a,U'_a)$ for all $a\in E_0$;
\item $\alpha'_e X_{\sigma(e)} = X_{\tau(e)} \alpha_e$ for all $e\in E_1$.
\end{enumerate}
This defines the category of representations of $(E_0,E_1)$.

If $Q=(E_0,E_1,\mbf U,\bs\alpha)$ is a quiver representation, write
$\End(Q)$ for the ring of all endomorphisms of $Q$ and $\Aut(Q)$ for the group
of all automorphisms of $Q$.  Let $N(Q)$ be the set of all nilpotent 
elements of $\End(Q)$. The group $\Aut(Q)$ acts naturally on the set 
$\End(Q)$ by conjugation: if 
$\mbf{X}=(X_a)\in \End(Q)$ and $\mbf{g}\in \Aut(Q)$, then 
$$\mbf{g}\circ \mbf{X} =   
(g_a X_a g_a^{-1})_{a \in E_0}.
$$

Assume that $K=\mF_q$, where $q$ is a prime power.
We shall be concerned with the number of orbits of this action and, more generally, with the number of orbits of actions of certain subgroups of $\Aut(Q)$ on certain subsets of $\End(Q)$, such as $N(Q)$ for example. 
Note that $\Aut(Q)$ and $N(Q)$ may be defined in terms of the ring structure
on $\End(Q)$, so they are preserved by ring isomorphisms.
Let
$$
\theta(Q) = \gamma(\Aut(Q), N(Q)).
$$

All the problems discussed in the introduction may be stated in terms of quiver representations.
If $\mbf l = (l_1,l_2,\ldots,l_s)$ is a tuple of 
nonnegative integers with $m=l_1+\cdots+l_s$, let $R_{\mbf l}=R_{\mbf l}(q)$ 
be the quiver representation
$$
\xymatrix{
\mF_q^{l_1}\ar@{^{(}->}[r] & \mF_q^{l_1+l_2}\ar@{^{(}->}[r] & 
\cdots \ar@{^{(}->}[r] &
\mF_q^{l_1+\cdots+l_{s-1}}\ar@{^{(}->}[r] & \mF_q^m
}
$$
where all the arrows represent injective linear maps. More formally,
$$
R_{\mbf l}=([1,s],[1,s-1],\mbf U,\bs\alpha)
$$ 
with $\sigma(i)=i$, 
$\tau(i)=i+1$ for all $i\in [1,s-1]$, $U_i=\mF_q^{l_1+\cdots+l_i}$ and
$\alpha_i$ injective. Obviously, these conditions define $R_{\mbf l}$ up to
isomorphism of representations.

We may choose a basis $\{b_1,\ldots,b_m\}$ of $U_s=\mF_q^m$ so that, for 
each $i$, the image of $U_i=\mF_q^{l_1+\cdots+l_i}$ under 
$\alpha_{s-1}\cdots\alpha_{i+1}\alpha_i$ is equal to the span of 
$\{b_1,\ldots,b_{l_1+\cdots+l_i} \}$. Let 
$J_{\mbf l}: \End(R_{\mbf l}) \ra \M_{m,m}(q)$ be the map which assigns to each
$\mbf X=(X_i)_{i\in [1,s]}\in \End(R_{\mbf l})$ the matrix of 
$X_s\in \End(\mF_q^m)$ with respect to the basis $\{b_1,\ldots,b_m\}$. 
Then the following result is obvious.

\begin{lem}\label{quivpar} 
The map $J_{\mbf l}$ is a ring isomorphism from $\End(R_{\mbf l})$ 
onto $M^{\mbf l}(q)$. 
Hence, $\rho_{\mbf l}(q)=\theta(R_{\mbf l})$. 
\end{lem}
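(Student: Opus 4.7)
The plan is to write down the explicit correspondence in both directions and check that it is compatible with the ring structure.

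First I would unpack what an endomorphism of $R_{\mbf l}$ actually is. By definition, $\mbf X=(X_i)_{i\in[1,s]}\in\End(R_{\mbf l})$ consists of maps $X_i\in\End(U_i)$ satisfying $\alpha_i X_i=X_{i+1}\alpha_i$ for $i\in[1,s-1]$. Because each $\alpha_i$ is injective, identifying $U_i$ with its image in $U_s$ under $\alpha_{s-1}\cdots\alpha_i$, the compatibility condition says exactly that $X_s$ leaves every subspace $\alpha_{s-1}\cdots\alpha_i(U_i)$ invariant and restricts on that subspace to (the conjugate of) $X_i$. Consequently $X_s$ determines the whole tuple uniquely, and the tuples in $\End(R_{\mbf l})$ are in bijection with the flag-preserving endomorphisms of $U_s=\mF_q^m$.

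Next I would translate this into the chosen basis. With $\{b_1,\ldots,b_m\}$ picked so that the image of $U_i$ is $\Span(b_1,\ldots,b_{l_1+\cdots+l_i})$, a linear map on $U_s$ preserves all these subspaces iff its matrix is block upper triangular with block sizes $l_1,\ldots,l_s$, i.e.\ lies in $M^{\mbf l}(q)$. So $J_{\mbf l}$ lands in $M^{\mbf l}(q)$, is injective by the observation above, and is surjective because any matrix in $M^{\mbf l}(q)$ yields a flag-preserving $X_s$ whose restrictions to the subspaces $\alpha_{s-1}\cdots\alpha_i(U_i)$ define the corresponding $X_i$'s. That $J_{\mbf l}$ is a ring homomorphism is immediate: composition, addition and scalar multiplication of tuples are componentwise, and on the $s$th component $J_{\mbf l}$ is literally ``take the matrix of $X_s$'', which is a ring map.

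For the final sentence I would note that any ring isomorphism sends the group of units to the group of units and the set of nilpotent elements to the set of nilpotent elements. Hence $J_{\mbf l}$ restricts to a group isomorphism $\Aut(R_{\mbf l})\xrightarrow{\sim} P^{\mbf l}(q)$ and to a bijection $N(R_{\mbf l})\xrightarrow{\sim} N^{\mbf l}(q)$ which intertwines the respective conjugation actions. The equality $\rho_{\mbf l}(q)=\theta(R_{\mbf l})$ of orbit counts follows. There is no real obstacle here; the only thing to be careful about is book-keeping, namely verifying that the compatibility condition of a quiver morphism really does force the tuple to be read off from $X_s$ (which is where injectivity of the $\alpha_i$ is used) and that the basis is chosen so that ``flag-preserving'' translates to ``block upper triangular of shape $\mbf l$''.
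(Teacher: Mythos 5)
Your proof is correct and is precisely the elaboration the paper has in mind; the paper itself declares Lemma~\ref{quivpar} ``obvious'' and supplies no argument, so there is no alternative proof to compare against. The key points you identify — that injectivity of the $\alpha_i$ forces $X_s$ to determine the whole tuple, that the compatibility conditions translate to flag-preservation, that the chosen basis makes ``flag-preserving'' mean ``block upper triangular of shape $\mbf l$'', and that a ring isomorphism carries units to units and nilpotents to nilpotents so that the conjugation actions and hence orbit counts agree — are exactly the book-keeping the authors implicitly leave to the reader.
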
  

Call a quiver representation $Q'=(E'_0,E'_1,\mbf U', \bs\alpha')$ 
an \emph{extension} of a quiver representation $Q=(E_0,E_1,\mbf U,\bs\alpha)$ 
if 
\begin{enumerate}[(i)]
\item $E'_0\supseteq E_0$;
\item $U'_a=U_a$ for all $a\in E_0$;
\item for every $\mbf X=(X_a)_{a\in E'_0}\in \End(Q')$, the tuple 
 $(X_a)_{a\in E_0}$ belongs to $\End(Q)$. 
\end{enumerate}

Let $Q'$ be an extension of $Q$. Define a map 
$\pi=\pi_Q^{Q'}:\End(Q') \ra \End(Q)$ by
$$(X_a)_{a\in E'_0} \mapsto (X_a)_{a\in E_0}.$$   
If $B\subseteq \End(Q)$, let $B^{Q'}=\pi^{-1}(B)$.
 If $G$ is a subgroup of $\Aut(Q)$, define
$G^{Q'}=\pi^{-1}(G)\cap \Aut(Q')$. (This will cause no ambiguity if a group is considered a distinct object from the set of its elements.)

Let $Q=(E_0,E_1,\mbf U, \bs\alpha)$ be a quiver representation, 
and let $e\in E$ with 
$\sigma(e)=a$, $\tau(e)=b$. Let $Y=\ker(\alpha_e)\le U_a$, 
$Z=\im(\alpha_e)\le U_b$.
Define an extension $K(Q,e)=(E'_0,E'_1,\mbf U',\bs\alpha')$ as follows:
\begin{enumerate}[(i)]
\item $E'_0=E_0 \sqcup \{ c \}$;
\item $U'_c= Y$ (and $U'_x=U_x$ for $x\in E_0$); 
\item $E'_1=E_1\sqcup \{ e' \}$ where $\sigma(e')=c$, $\tau(e')=a$;
\item $\alpha'_{f}=\alpha_f$ for all $f\in E_1$, and $\alpha'_{e'}$ is 
the inclusion map $Y\hra U_a$.  
\end{enumerate}
Define another extension $I(Q,e)=(E''_0,E''_1,\mbf U'',\bs\alpha'')$ 
as follows:
\begin{enumerate}[(i)]
\item $E''_0=E_0 \sqcup \{ d \}$;
\item $U''_d = Z$; 
\item $E''_1=(E_1\setminus \{ e \}) \sqcup \{ e^{\sharp}, e'' \}$,
where $\sigma(e'')=d$, 
$\tau(e'')=b$, $\sigma(e^{\sharp})=a$, $\tau(e^{\sharp})=d$;
\item  $\alpha''_{f}=\alpha_f$ for all $f\in E_1 \setminus \{ e \}$, 
 $\alpha''_{e''}$ is the inclusion map $Z\hra U_b$, 
and $\alpha''_{e^{\sharp}}$ is the map 
$U_a\ra Z$ given by $\alpha''_{e^{\sharp}}(v)=\alpha_e (v)$ $\forall v\in V$.
\end{enumerate}
Clearly, $K(Q,e)$ is an extension of $Q$.
Also, $Q'':=I(Q,e)$ is an extension of $Q$: 
if $\mbf X\in \End(Q'')$, then $\alpha_e X_a = X_b \alpha_e$ because 
$\alpha_e = \alpha''_{e''} \alpha''_{e^{\sharp}}$. 
\begin{lem}\label{KI} Let $Q=(E_0,E_1,\mbf U,\bs\alpha)$ be a quiver
 representation, and let $e\in E_1$. Let 
$Q'=K(Q,e)$ and $Q''=I(Q,e)$. Then the maps 
$\pi^{Q'}_Q : \End(Q') \ra \End(Q)$ 
 and $\pi^{Q''}_Q: \End(Q'') \ra \End(Q)$ are ring isomorphisms.
\end{lem}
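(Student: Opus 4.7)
The plan is to prove bijectivity of each $\pi$ directly and observe that it is a ring homomorphism by construction, since it is simply a restriction of the tuple to a subset of indices, and the ring operations on $\End(Q)$ and $\End(Q')$ (resp.\ $\End(Q'')$) are defined componentwise. So the real content is showing bijectivity, and for each of the two cases the key claim is that the ``extra'' component of an endomorphism is forced by the original data.

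For $Q' = K(Q,e)$, given $\mbf X = (X_a)_{a\in E_0}\in \End(Q)$, the condition at $e$ is $\alpha_e X_a = X_b \alpha_e$, which immediately gives $X_a(Y)\subseteq Y$ because $v\in \ker\alpha_e$ implies $\alpha_e X_a v = X_b \alpha_e v = 0$. Hence setting $X_c := X_a|_Y$ produces an endomorphism of $K(Q,e)$: the only new arrow is $e'$, whose compatibility requirement $X_a\alpha'_{e'} = \alpha'_{e'} X_c$ holds tautologically. Conversely, for any $\mbf X'\in \End(Q')$ the compatibility at $e'$ forces $X_c = X_a|_Y$, so $\pi^{Q'}_Q$ is a bijection.

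For $Q'' = I(Q,e)$, the arrow $e$ has been replaced by the pair $e^\sharp, e''$ through the intermediate space $Z = \im\alpha_e$. Given $\mbf X\in \End(Q)$, the identity $\alpha_e X_a = X_b \alpha_e$ says $X_b(Z)\subseteq Z$, so one can define $X_d := X_b|_Z$. Then the compatibility at $e''$ (inclusion $Z\hra U_b$) reads $X_b|_Z = X_d$, which holds; and the compatibility at $e^\sharp$ reads $X_d \alpha_e v = \alpha_e X_a v$, which is exactly the original relation at $e$ restricted to $Z$. Conversely, for any $\mbf X''\in \End(Q'')$, the relation at $e''$ forces $X_d = X_b|_Z$, and combining the relations at $e^\sharp$ and $e''$ recovers $\alpha_e X_a = X_b\alpha_e$, showing that $\pi^{Q''}_Q(\mbf X'')\in \End(Q)$ and that $X_d$ is determined by $X_b$. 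This gives a two-sided inverse to $\pi^{Q''}_Q$.

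There is no real obstacle here; everything is a direct unwinding of definitions. The only thing to be slightly careful about is the direction of the inclusion arrow $e^\sharp$ versus $e''$ and checking that the induced map $X_d$ (coming from $X_b$ via $e''$, or equivalently from $X_a$ via $e^\sharp$) is well-defined and agrees in both descriptions — which is exactly the content of the original relation at $e$. Since $\pi$ is clearly multiplicative and additive on tuples (componentwise operations on $\End$ coincide under the forgetful map), the bijectivity upgrades at once to a ring isomorphism in both cases.
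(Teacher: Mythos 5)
Your proof is correct and follows essentially the same route as the paper: in each case you observe that the morphism condition at the new arrow (whose underlying map is injective, resp.\ surjective onto $Z$) forces the extra component to be $X_a|_Y$, resp.\ $X_b|_Z$, and that these restrictions make sense because the relation $\alpha_e X_a = X_b\alpha_e$ implies $X_a(Y)\subseteq Y$ and $X_b(Z)\subseteq Z$. The paper compresses this by only stating that the preimage of each $\mbf X$ is a singleton; you additionally spell out that the induced tuple is a valid endomorphism and that $\pi^{Q''}_Q$ really lands in $\End(Q)$, which the paper handles in the remark preceding the lemma.
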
   
\begin{proof} As above, let $a=\sigma(e)$, $b=\tau(e)$, $Y=\ker(\alpha_e)$, 
$Z=\im(\alpha_e)$. Let 
$\mbf X=(X_i)_{i\in E_0}\in \End(Q)$. Since $\alpha_e X_a = X_b \alpha_e$, the map $X_a$ preserves $Y$ and $X_b$ preserves $Z$. Any element 
$\mbf X'\in \left(\pi_Q^{Q'} \right)^{-1}(\mbf X)$ satisfies 
$\alpha'_{e'} X'_c = X'_c \alpha'_{e'}$. Since $\alpha'_{e'}$ is injective, it follows that $\left(\pi_Q^{Q'}\right)^{-1}(\mbf X)=\{\mbf X'\}$, where
$X'_c = X_a |_Y$. Similarly, $\left(\pi_Q^{Q''}\right)^{-1} (\mbf X)=
\{ \mbf X'' \}$, 
where $X''_d=X_b|_Z$. Hence, $\pi^{Q'}_Q$ and $\pi^{Q''}_Q$ are bijections.
\end{proof}

\section{Reduction to nilpotent endomorphisms}\label{rednil}

In this section we show that, if $\Cal Y$ is a 
class of endomorphisms over $\mF_q$, the problem of counting 
orbits of $\Cal Y$-endomorphisms of a quiver representation may be reduced
 to that of counting orbits of nilpotent endomorphisms of various quiver
representations. (A \emph{$\Cal Y$-endomorphism} is an endomorphism $\mbf X$ 
such that $X_a\in \Cal Y$ for all $a$.) 
We use standard methods related to rational canonical forms. The results of this section are not used elsewhere in the paper, but provide some motivation for our later
focus on nilpotent endomorphisms.

Let $U$ be a vector space over a field $K$. 
Let $L$ be a field containing $K$. Suppose $U'$ is a vector space over $L$ that becomes $U$ if one restricts the scalars to $K$; that is, $U'=U$ as sets and 
multiplication by scalars from $K$ in $U$ is the same as in $U'$. We shall say
that $U'$ is an $L$-\emph{expansion} of $U$ and $U$ is the 
\emph{restriction} of $U'$ 
to $K$. Let $Q=(E_0,E_1,\mbf U,\bs\alpha)$ be a quiver representation over $K$.
Say that a quiver representation $Q'=(E_0,E_1,\mbf U',\bs\alpha)$ over 
$L$ is an $L$-\emph{expansion} of $Q$ 
(and $Q$ is the \emph{restriction} of $Q'$ to $K$) if, for each $a\in E_0$, the space $U'_a$ is an $L$-\emph{expansion} of $U_a$. (Then $\alpha_e$ is $L$-linear for each 
$e\in E_1$.)  

Let $\mbf X\in \End(Q)$. If $a\in E_0$ and $f$ is a monic irreducible polynomial
 over $\mF_q$, let
$$
U_{\mbf X,f,a} = \{ u\in U_a : f(X_a)^k u = 0 \text{ for some } k\in \mN\}.
$$
Then $U_{\mbf X,f,a}=0$ for all but finitely many $f$. For all monic irreducible polynomials $f\in \mF_q [T]$ and all $e\in E_1$, we have
$\alpha_e (U_{\mbf X,f,\sigma(e)}) \le U_{\mbf X,f,\tau(e)}$. 
Thus, $\mbf U_{\mbf X,f}:=(U_{X,f,a})_{a\in E_0}$ induces a subrepresentation 
$Q_{\mbf X,f}$ of $Q$.

\begin{lem}\label{rednil1} 
Let $Q$ be a quiver representation over $\mF_q$. Suppose 
$\mbf X\in\End(Q)$. Then 
$$ 
Q = \bigoplus_{f} Q_{\mbf X,f}
$$
where the sum is over all monic irreducible polynomials $f$ over $\mF_q$. 
Moreover, the isomorphism class of each component $Q_{\mbf X,f}$ is an invariant of the $\Aut(Q)$-orbit
of $\mbf X$.
\end{lem}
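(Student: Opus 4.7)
The plan is to reduce the claim vertex by vertex to the classical primary decomposition for a single linear endomorphism, and then verify that the pieces respect the quiver structure and behave functorially under $\Aut(Q)$-conjugation.

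For the decomposition, fix $a\in E_0$. The primary decomposition theorem applied to $X_a\in \End(U_a)$ yields $U_a=\bigoplus_f U_{\mbf X,f,a}$, where $f$ ranges over the monic irreducible factors of the minimal polynomial of $X_a$ (and $U_{\mbf X,f,a}=0$ for the other $f$). To see that this assembles into a direct sum decomposition of $Q$ itself, I would check the inclusion already asserted in the paragraph preceding the lemma: from $\alpha_e X_{\sigma(e)}=X_{\tau(e)}\alpha_e$ one obtains by induction $\alpha_e\, f(X_{\sigma(e)})^k = f(X_{\tau(e)})^k\, \alpha_e$ for every polynomial $f$ and every $k\ge 1$, so $\alpha_e(U_{\mbf X,f,\sigma(e)})\subseteq U_{\mbf X,f,\tau(e)}$. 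Hence the tuple $\mbf U_{\mbf X,f}=(U_{\mbf X,f,a})_{a\in E_0}$ together with the restrictions of the $\alpha_e$ forms a subrepresentation $Q_{\mbf X,f}$ of $Q$, and summing the vertex-wise decompositions gives $Q=\bigoplus_f Q_{\mbf X,f}$ as a decomposition of quiver representations.

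For the invariance statement, let $\mbf g\in\Aut(Q)$ and set $\mbf Y=\mbf g\circ\mbf X$, so $Y_a=g_a X_a g_a^{-1}$ for each $a$. Then $f(Y_a)^k = g_a f(X_a)^k g_a^{-1}$, so $u\in U_{\mbf Y,f,a}$ if and only if $g_a^{-1}u\in U_{\mbf X,f,a}$; that is, $g_a$ restricts to a linear isomorphism $U_{\mbf X,f,a}\to U_{\mbf Y,f,a}$. The automorphism identities $\alpha_e g_{\sigma(e)}=g_{\tau(e)}\alpha_e$ restrict to the corresponding subspaces, so the tuple $(g_a|_{U_{\mbf X,f,a}})_{a\in E_0}$ defines an isomorphism $Q_{\mbf X,f}\xrightarrow{\sim}Q_{\mbf Y,f}$ of quiver representations, as required.

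I do not anticipate a genuine obstacle here: the whole argument is linear algebra combined with bookkeeping, and the key technical input, namely $\alpha_e(U_{\mbf X,f,\sigma(e)})\subseteq U_{\mbf X,f,\tau(e)}$, has essentially already been stated just before the lemma. The only point that deserves a moment of care is to make sure the isomorphism in the second part is produced at the level of quiver representations rather than only at each vertex; this follows directly from the fact that $\mbf g$ itself is a morphism of representations, so its restriction to any subrepresentation of $Q$ (in particular $Q_{\mbf X,f}$) is automatically a morphism of quiver representations into $Q$, landing in $Q_{\mbf Y,f}$ by the eigenspace computation above.
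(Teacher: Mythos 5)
Your proof is correct and matches the paper's approach: the paper also derives the decomposition from the vertex-wise primary decomposition $U_a=\bigoplus_f U_{\mbf X,f,a}$ (citing the inclusion $\alpha_e(U_{\mbf X,f,\sigma(e)})\subseteq U_{\mbf X,f,\tau(e)}$ noted just before the lemma) and dismisses the invariance statement as clear. You have simply spelled out the details the paper leaves implicit, in particular the explicit isomorphism of quiver representations $(g_a|_{U_{\mbf X,f,a}})_a:Q_{\mbf X,f}\to Q_{\mbf Y,f}$.
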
  

\begin{proof} The first statement follows from the fact that
$U_{a}=\bigoplus_{f} U_{\mbf X,f,a}$ for each $a$. The second statement is 
clear.
\end{proof}

We now consider each quiver representation $Q_{\mbf X,f}$ separately. Fix a monic
irreducible polynomial $f=f(T)$ over $\mF_q$. Let $d$ be the degree of $f$.   
Suppose $\mbf X$ is an endomorphism of a quiver representation $Q$ over $\mF_q$
 such that $Q=Q_{\mbf X,f}$.   
Let $\mF_q [T]_{(f)}$ be the localisation of $\mF_q[T]$ at the ideal
$(f)$; it consists of all fractions $g/h$ such that
 $g,h\in\mF_q [T]$ and $h$ is not divisible by $f$. Then $\mbf X$ induces an
$\mF_q [T]_{(f)}$-module structure on each $U_a$: 
multiplication by $T$ is given
by the action of $X_a$. Moreover, each $\alpha_e$ becomes 
an $\mF_q [T]_{(f)}$-module homomorphism.
 
Let $S_{f}$ be the completion of the discrete valuation ring $\mF_q [T]_{(f)}$. 
That is, $S_{f}$ is the inverse limit of the rings 
$\mF_q [T]/(f^k)$, $k=1,2,\ldots$. 
Any finite $\mF_q [T]_{(f)}$-module is annihilated by $f^k$ 
for large enough $k$.
Thus, a finite $\mF_q [T]_{(f)}$-module may be 
thought of as a (finite) $S_{f}$-module (and vice versa). 

Now, by~\cite[\S 9, Proposition 3]{Bourbaki}, $S_f$ is isomorphic to 
$\mF_{q^d}[[Z]]$, the ring of formal power series over a variable $Z$. Indeed, let
$r$ be the element of $S_f$ whose projection onto $\mF_q [T]/(f^k)$ is  
$T^{q^{kd}}$ for each $k\in \mN$. 
Then $\mF_q [r]\subseteq S_f$ is a field isomorphic 
to $\mF_{q^d}$, and each element of $S_f$ may be represented as a power series
in $f$ with coefficients in $\mF_q[r]$ (see \emph{loc. cit.} for more detail). 

Hence, $Q$ gives rise to a finite $\mF_{q^d} [[Z]]$-module. 
This module corresponds to an $\mF_{q^d}$-expansion $Q'$ of $Q$ and a nilpotent endomorphism of $Q'$ (given by multiplication by $Z$). 

Conversely, an $\mF_{q^d}$-expansion $Q'$ of
$Q$ together with a nilpotent endomorphism of $Q'$ gives rise to an 
$\mF_{q^d}[[Z]]$-module structure on each $U_a$ in such a way that all
$\alpha_e$ are $\mF_{q^d}[[Z]]$-endomorphisms. Identifying $\mF_{q^d}[[Z]]$ with
$S_f$ as above, we get an endomorphism $\mbf X$ of $Q$ such that 
$Q_{\mbf X,f}=Q$. We have proved the following result.

\begin{lem}\label{rednil2} Let $q$ be a prime power, and let $f$ be a monic 
irreducible polynomial over $\mF_q$ of degree $d$. Suppose $Q$ is a quiver 
representation over $\mF_q$. Then the $\Aut(Q)$-orbits of endomorphisms
$\mbf X$ of $Q$ such that $Q_{\mbf X,f}=Q$ are in a one-to-one correspondence
with $\Aut(Q)$-orbits of pairs $(Q',\mbf Z)$ such that
$Q'$ is an $\mF_{q^d}$-expansion of $Q$ and $\mbf Z\in N(Q')$. 
\end{lem}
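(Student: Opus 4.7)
The plan is to leverage the construction sketched in the two paragraphs preceding the lemma, which already produces, at the level of individual endomorphisms, mutually inverse maps between $\{\mbf X\in\End(Q):Q_{\mbf X,f}=Q\}$ and the set of pairs $(Q',\mbf Z)$ in the statement. All that is missing is to specify how $\Aut(Q)$ acts on such pairs and to verify that the maps are $\Aut(Q)$-equivariant, so that they descend to the orbit spaces.

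Given $\mbf g=(g_a)\in\Aut(Q)$ and a pair $(Q',\mbf Z)$ with $Q'=(E_0,E_1,\mbf U',\bs\alpha)$, I would define $\mbf g\cdot(Q',\mbf Z):=(Q'',\mbf g\circ\mbf Z)$, where $Q''=(E_0,E_1,\mbf U'',\bs\alpha)$ has $U''_a=U_a$ as abelian groups but with scalar multiplication transported along $g_a$, that is, $\lambda\cdot_{U''_a}u:=g_a\bigl(\lambda\cdot_{U'_a}g_a^{-1}(u)\bigr)$ for $\lambda\in\mF_{q^d}$. Since $\mbf g$ is an $\mF_q$-linear endomorphism of $Q$ and $\mF_q\subseteq\mF_{q^d}$, the $\mF_q$-scalar multiplication is unchanged, and each $\alpha_e$ remains $\mF_{q^d}$-linear with respect to the new structures, so $Q''$ is a bona fide $\mF_{q^d}$-expansion of $Q$; by construction $g_a\colon U'_a\to U''_a$ is $\mF_{q^d}$-linear, so $\mbf g\circ\mbf Z\in N(Q'')$.

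For equivariance, I would start from $\mbf X$ and its image $(Q',\mbf Z)$, then examine $\mbf g\circ\mbf X=(g_aX_ag_a^{-1})$. The induced $\mF_q[T]_{(f)}$-action on $U_a$ via $T\mapsto g_aX_ag_a^{-1}$ is precisely the conjugate by $g_a$ of the original action coming from $X_a$. This conjugacy survives the inverse limit defining $S_f$ and the identification $S_f\cong\mF_{q^d}[[Z]]$, because the distinguished element $r\in S_f$ (and hence the copy of $\mF_{q^d}$ inside $S_f$) is intrinsic to $q$ and $f$ and does not depend on $\mbf X$. Therefore the $\mF_{q^d}$-expansion of $Q$ built from $\mbf g\circ\mbf X$ is exactly $Q''$, and the nilpotent endomorphism extracted from $\mbf g\circ\mbf X$ is exactly $\mbf g\circ\mbf Z$, as required.

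The only subtle point is the last one: one must confirm that the Bourbaki identification $S_f\cong\mF_{q^d}[[Z]]$ is canonical enough to commute with conjugation by $\mbf g$. Once this is checked (by unwinding the definition of $r$ as the inverse limit of $T^{q^{kd}}$, which is manifestly independent of any module structure), the bijection on endomorphisms is $\Aut(Q)$-equivariant and descends to the claimed bijection on orbits.
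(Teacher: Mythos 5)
Your proposal is correct and follows essentially the same route as the paper: the paragraphs preceding the lemma already construct the mutually inverse bijections at the level of individual endomorphisms via the identification $S_f\cong\mF_{q^d}[[Z]]$, and the paper simply closes with ``We have proved the following result,'' leaving the descent to orbits implicit. You have usefully spelled out the implicit step — the definition of the $\Aut(Q)$-action on pairs $(Q',\mbf Z)$ and the equivariance check, hinging on the fact that the element $r\in S_f$ (hence the embedded copy of $\mF_{q^d}$) is defined intrinsically from $q$ and $f$ alone — but the underlying argument is the same.
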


Let $\Cal Y$ be a class of linear endomorphisms over $\mF_q$, as defined in 
Section~\ref{parintro}. Let 
$$
\End_{\Cal Y}(Q) = \{ \mbf X\in \End(Q): X_a\in \Cal Y \text{ for all }
a\in E_0  \}. 
$$  
Let $F$ be a subset of $\ol{\mF}_q$ such that $\Cal Y=\Cal Y_F$. 
Call a polynomial 
$f\in \mF_q[T]$ a \emph{$\Cal Y$-polynomial} if all the roots of $f$ 
(over $\ol{\mF}_q$) are in $F$. Let $k_{\Cal Y,d}(q)$ be the number of monic
irreducible $\Cal Y$-polynomials of degree $d$.     

Consider again an arbitrary quiver representation 
$Q=(E_0,E_1,\mbf U,\bs\alpha)$ over $\mF_q$.
Lemmata~\ref{rednil1} and~\ref{rednil2} allow us to express
 $\gamma(\Aut(Q),\End_{\Cal Y}(Q))$ in terms of 
$\theta(Q'_0)$ where $Q_0$ varies among direct summands of $Q$ 
and $Q'_0$ is an expansion of $Q_0$.

Let $\mbf X\in \End_{\Cal Y}(Q)$. By Lemma~\ref{rednil1}, there exists
 a finite set $\{f_i\}_{i\in J}$ of irreducible $\Cal Y$-polynomials such that
$Q=\bigoplus_{j\in J} Q_j$ where $Q_j=Q_{\mbf X,f_j}$. Here, $J$ is a finite indexing set. Let $\epsilon(j)=\deg f_j$.
By Lemma~\ref{rednil2}, each $Q_j$ together with the restriction of 
$\mbf X$ to $Q_j$ corresponds to an $\mF_{q^{\epsilon(j)}}$-expansion 
$Q'_j$ of $Q_j$ 
together with a nilpotent endomorphism $\mbf Z^{(j)}\in N(Q'_{j})$. 
Up to conjugation by $\Aut(Q'_j)$, the endomorphism
$\mbf Z^{(j)}$ may be chosen in $\theta(Q'_j)$ ways (by definition). Let 
$\Cal A$ be the partition of $J$ that consists of the sets
$$
\{ j\in J: Q'_j\simeq Q'_i \} 
$$
where $i$ varies among the elements of $J$. In particular, 
$\epsilon(i)=\epsilon(j)$ whenever $i$ and $j$ are $\Cal A$-equivalent.   

\begin{prop}\label{rednil3} 
Let $Q$ be a quiver representation over $\mF_q$. Then
\begin{eqnarray*}
\gamma(\Aut(Q),\End_{\Cal Y}(Q)) & = & 
\sum_{(Q_i)_{i\in J}} \sum_{\Cal A}
 \frac{1}{\prod_{A\in \Cal A} |A|!}
\sum_{\epsilon} 
\prod_{d=1}^{\infty}
\frac{k_{\Cal Y,d}(q)!}{(k_{\Cal Y,d}(q)-|\epsilon^{-1}(d)|)!} \\
& \times & \sum_{(Q'_A)_{A\in \Cal A}} \prod_{A\in\Cal A} \theta(Q'_A)^{|A|}.
\end{eqnarray*}
Here, the first sum is over all decompositions $Q=\bigoplus_{i\in J} Q_i$ 
of $Q$ as a direct sum of representations; two such decompositions are 
considered to be the same if one may be obtained from the other by permuting the indices $i$ and replacing each $Q_i$ with an isomorphic representation 
(we assume that $J=[1,|J|]$). 
The second sum is over all 
partitions $\Cal A$ of $J$ such that $Q_i\simeq Q_j$ whenever $i$ and $j$ are
$\Cal A$-equivalent. 
The third sum is over all maps 
$\epsilon: J\ra \mN$ such that $\epsilon(i)=\epsilon(j)$ whenever $i$ and 
$j$ are $\Cal A$-equivalent. 
The last sum is over all isomorphism classes of 
tuples $(Q'_A)_{A\in \Cal A}$
 where $Q'_A$ is an $\mF_{q^{\epsilon(i)}}$-expansion of 
$Q_i$ ($i$ is an arbitrary element of $A$) and the quiver representations 
$Q'_{A}$ ($A\in \Cal A$) are pairwise not isomorphic.  
\end{prop}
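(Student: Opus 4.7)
The plan is to combine Lemmata~\ref{rednil1} and~\ref{rednil2} and translate the resulting parametrisation of orbits into the sums appearing in the statement. By Lemma~\ref{rednil1}, every $\mbf X\in\End_{\Cal Y}(Q)$ canonically determines a finite set of distinct monic irreducible $\Cal Y$-polynomials $\{f_i\}_{i\in J}$ (those $f$ for which $Q_{\mbf X,f}\ne 0$) together with a decomposition $Q=\bigoplus_{i\in J} Q_{\mbf X,f_i}$. Two endomorphisms $\mbf X$ and $\mbf Y$ are $\Aut(Q)$-conjugate if and only if one has an index bijection identifying their polynomial sets and, for each corresponding $f$, an isomorphism of canonical summands conjugating the restricted endomorphisms; the nontrivial direction uses that any $\mbf g\in \Aut(Q)$ must permute the canonical subrepresentations $Q_{\mbf X,f}$ accordingly, as these are uniquely determined by $\mbf X$. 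Applying Lemma~\ref{rednil2} to each summand converts the $\Aut(Q_{\mbf X,f_i})$-orbit of $\mbf X|_{Q_{\mbf X,f_i}}$ into an $\Aut(Q_{\mbf X,f_i})$-orbit of pairs $(Q'_i,\mbf Z^{(i)})$, where $Q'_i$ is an $\mF_{q^{\epsilon(i)}}$-expansion of $Q_{\mbf X,f_i}$ with $\epsilon(i)=\deg f_i$, and $\mbf Z^{(i)}\in N(Q'_i)$.

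Consequently, each $\Aut(Q)$-orbit on $\End_{\Cal Y}(Q)$ corresponds to the following data: (a) an isomorphism class of decomposition $Q=\bigoplus_{i\in J} Q_i$; (b) a degree map $\epsilon\colon J\to\mN$; (c) distinct $\Cal Y$-polynomials $f_i$ of degree $\epsilon(i)$; (d) isomorphism classes of $\mF_{q^{\epsilon(i)}}$-expansions $Q'_i$ of $Q_i$; (e) an orbit in $N(Q'_i)/\Aut(Q'_i)$ for each $i$. The equivalence on such data inducing the same $\Aut(Q)$-orbit is generated by permutations of $J$ that preserve $(Q_i,Q'_i)$ up to isomorphism (together with the corresponding permutations of (c) and (e)); any such permutation is realised by an automorphism of $Q$ that swaps the corresponding direct summands via chosen isomorphisms. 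Letting $\Cal A$ denote the partition of $J$ into equivalence classes for the relation $(Q_i,Q'_i)\simeq(Q_j,Q'_j)$, one sees that $\Cal A$ refines the partition by $Q_i$-iso-class (second sum), $\epsilon$ is constant on each part (third sum), and the representatives $Q'_A$ are pairwise non-isomorphic across parts (last sum).

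For fixed data $((Q_i),\Cal A,\epsilon,(Q'_A))$, the number of ordered polynomial assignments is $\prod_d k_{\Cal Y,d}(q)!/(k_{\Cal Y,d}(q)-|\epsilon^{-1}(d)|)!$, the number of orbit choices is $\prod_A \theta(Q'_A)^{|A|}$, and the intra-part relabelling symmetries have order $\prod_A |A|!$. Dividing by this symmetry count and summing over all such data yields the claimed formula. The main delicate step is verifying that the effective relabelling stabiliser is exactly $\prod_A |A|!$: one must check that internal automorphisms of a single $Q'_A$ contribute no additional equivalences (these are already accounted for in the factor $\theta(Q'_A)$) and that cross-part permutations of $\Cal A$ do not arise (since $Q'_A \not\simeq Q'_B$ for $A\ne B$ by construction), so that the only symmetries identifying distinct ordered tuples with the same orbit are precisely the intra-part permutations.
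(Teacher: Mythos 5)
Your proposal is correct and takes essentially the same approach as the paper: both combine Lemmata~\ref{rednil1} and~\ref{rednil2} to parametrise $\Aut(Q)$-orbits by (decomposition, degree map, distinct polynomials, expansion classes, nilpotent-orbit choices) and then divide by the intra-block symmetry $\prod_{A\in\Cal A}|A|!$. You are somewhat more explicit than the paper about why the relabelling stabiliser is exactly $\prod_A|A|!$, but the underlying counting argument is identical.
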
 
\begin{proof} Suppose $\Cal A$, $\epsilon$, $Q'_A=Q'_i$ ($i\in A$), as above,
 are fixed. There are 
$$
\prod_{d=1}^{\infty}\frac{k_{\Cal Y,d}(q)!}{(k_{\Cal Y,d}(q)-|\epsilon^{-1}(d)|)!}
$$  
ways to assign a monic irreducible $\Cal Y$-polynomial $f_i$ of 
degree $\epsilon(i)$ to each $i\in J$ so that
 all those polynomials are distinct. (Note that all but finitely many
 terms in the product are equal to $1$, so the product is well defined.) 
There are 
$$
 \prod_{A\in \Cal A} \theta(Q'_A)^{|A|}
$$
ways to choose, for each $i\in J$, 
an $\Aut(Q'_i)$-orbit in $N(Q'_i)$.
By Lemmata~\ref{rednil1} and~\ref{rednil2}, 
these assignments determine an $\Aut(Q)$-orbit in $\End_{\Cal Y}(Q)$, and all
orbits occur in this way. 
However, a permutation of the indices within a subset $A\in \Cal A$ 
yields the same orbit. Thus, each orbit giving rise to this particular $\Cal A$ is obtained by   
$
\prod_{A\in \Cal A} (|A|!)
$
such assignments. Hence, we must divide by this number to obtain the number 
of orbits.
\end{proof}

Now let $\mbf l=(l_1,\ldots,l_s)$ be a tuple of nonnegative integers, and consider the quiver $R_{\mbf l}(q)$.
If $\mbf l'=(l'_1,\ldots,l'_s)$ is another such $s$-tuple, let 
$\mbf l+\mbf l'=(l_1+l'_1,\ldots,l_s+l'_s)$; write $\mbf l\le \mbf l'$ if 
$l_i\le l'_i$ for all $i$. Also, if $b\in \mQ$, let 
$b\mbf l = (b l_1,\ldots, b l_s)$. It is easy to see that all decompositions
 of $R_{\mbf l}$ into direct sums of subrepresentations are of the form
$$
R_{\mbf l} = \bigoplus_{i=1}^n R_{\mbf l^i}
$$ 
where $\mbf l = \mbf l^1+\cdots +\mbf l^n$. 
Let $d\in \mN$. If not all $l_i$ 
are divisible by $d$, then there is no $\mF_{q^d}$-expansion of $R_{\mbf l}(q)$.
If all $l_i$ are divisible by $d$, then $R_{\mbf l}(q)$ has a unique
(up to the action of $\Aut(R_{\mbf l}(q))$) $\mF_{q^d}$-expansion, 
namely $R_{\mbf l/d}(q^d)$. 

Consider the set of tuples 
$J=(\mbf l^1,\ldots,\mbf l^n,d_1,\ldots,d_n)$ such that 
each $\mbf l^i$ is an $s$-tuple of nonnegative integers, not all equal to zero,
 $d_i\in \mN$ for each $i$  and 
$$
\sum_{i=0}^n d_i \mbf l^i = \mbf l.
$$ 
Let $\scr D(\mbf l)$ be a complete set of representatives of the action of 
the symmetric group $S_n$ on this set, where the action is by permuting the 
indices $1,\ldots,n$. Less formally, $\scr D(\mbf l)$ is in a one-to-one 
correspondence with ways to decompose $R_{\mbf l}(q)$ as a direct sum of 
quivers $R_{\mbf l'}(q)$ and to expand each 
$R_{\mbf l'}(q)$ to $R_{\mbf l'/d}(q^d)$ for some $d$.  

By Lemma~\ref{quivpar}, the numbers $\gamma(P^{\mbf l}(q),P^{\mbf l}(q))$ and 
$\gamma(P^{\mbf l}(q),M^{\mbf l}(q))$ may both be expressed as 
$\gamma(\Aut(R_{\mbf l}(q)),\End_{\Cal Y}(R_{\mbf l}(q)))$ where $\Cal Y$ is 
the class of all invertible endomorphisms or the class of all endomorphisms, 
respectively. If $\Cal Y$ is one of those classes, 
then $k_{\Cal Y,d}(q)$ is polynomial in $q$. We deduce the following from
Proposition~\ref{rednil3}.  

\begin{cor}\label{rednil4} 
Let $\mbf l=(l_1,\ldots,l_s)$ be a tuple of nonnegative integers. Then there
exist tuples $(a_J)_{J\in \scr D(\mbf l)}$ and 
$(b_{J})_{J\in \scr D(\mbf l)}$ 
where $a_{J}(T)$ and $b_{J}(T)$ are 
polynomials with rational coefficients such that, for all prime powers $q$, 
\begin{eqnarray*}
\gamma(P^{\mbf l}(q),P^{\mbf l}(q))& = & 
\sum_{J\in \scr D(\mbf l)} a_J (q) \prod_{i=1}^n \rho_{\mbf l^i}(q^{d_i})
\quad
\text{and } \\
\gamma(P^{\mbf l}(q),M^{\mbf l}(q)) & = & 
\sum_{J\in \scr D(\mbf l)} b_J (q) \prod_{i=1}^n \rho_{\mbf l^i}(q^{d_i})
\end{eqnarray*}
In each case, the first sum is over all elements 
$J=(\mbf l^1,\ldots,\mbf l^n,d_1,\ldots,d_n)$ of $\scr D(\mbf l)$.
\end{cor}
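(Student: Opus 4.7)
The plan is to specialise Proposition~\ref{rednil3} to the quiver representation $Q=R_{\mbf l}(q)$ and then repackage the resulting multiple summation as a single sum indexed by $\scr D(\mbf l)$.

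First, by Lemma~\ref{quivpar} the ring isomorphism $J_{\mbf l}$ carries $\End(R_{\mbf l}(q))$ onto $M^{\mbf l}(q)$ and restricts to an isomorphism $\Aut(R_{\mbf l}(q))\cong P^{\mbf l}(q)$. Hence
\[
\gamma(P^{\mbf l}(q),P^{\mbf l}(q))=\gamma(\Aut(R_{\mbf l}(q)),\End_{\Cal Y_1}(R_{\mbf l}(q))),
\]
where $\Cal Y_1$ is the class of invertible endomorphisms, and the analogous equality holds for $M^{\mbf l}(q)$ with $\Cal Y_2$ the class of all endomorphisms. By M\"obius inversion, the total number of monic irreducible polynomials of degree $d$ over $\mF_q$ equals $\frac{1}{d}\sum_{e\mid d}\mu(d/e)q^e$, and $k_{\Cal Y_1,d}(q)$ is obtained from this by subtracting $1$ in degree $d=1$ (to remove the polynomial $T$). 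In particular $k_{\Cal Y_i,d}(q)$ is a polynomial in $q$ with rational coefficients, and therefore so is each falling factorial $k_{\Cal Y,d}(q)!/(k_{\Cal Y,d}(q)-|\epsilon^{-1}(d)|)!$ appearing in Proposition~\ref{rednil3}.

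Next, apply Proposition~\ref{rednil3} to $Q=R_{\mbf l}(q)$. By the observation recorded just before the corollary, every direct-sum decomposition of $R_{\mbf l}(q)$ as a quiver representation has the form $\bigoplus_{i=1}^{n} R_{\mbf m^i}(q)$ with $\sum_i\mbf m^i=\mbf l$, and $R_{\mbf m^i}(q)$ admits an $\mF_{q^d}$-expansion precisely when $d$ divides every entry of $\mbf m^i$, in which case the expansion is uniquely (up to isomorphism) $R_{\mbf m^i/d}(q^d)$. Lemma~\ref{quivpar} then gives $\theta(R_{\mbf m^i/d}(q^d))=\rho_{\mbf m^i/d}(q^d)$. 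Substituting these facts into the right-hand side of Proposition~\ref{rednil3} turns each term into a product of a rational number, a polynomial factor in $q$, and a product of values $\rho_{\mbf l'}(q^{d'})$.

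Finally, the quadruple summation of Proposition~\ref{rednil3} (over a decomposition, a partition $\Cal A$, a map $\epsilon$, and an expansion class) must be collected into a sum indexed by $\scr D(\mbf l)$. Given such data, assign to each index $i$ in the decomposition the pair $(\mbf l^i,d_i):=(\mbf m^i/\epsilon(i),\epsilon(i))$; the resulting tuple, read modulo $S_n$, is an element of $\scr D(\mbf l)$. Gathering the contributions with a fixed image $J\in\scr D(\mbf l)$, and absorbing the combinatorial prefactor $1/\prod_{A\in\Cal A}|A|!$ together with the polynomial falling factorials into a single coefficient, yields the polynomials $a_J(q)$ and $b_J(q)$. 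The main obstacle is exactly this last bookkeeping step: a given $J\in\scr D(\mbf l)$ may be the image of many different $(\Cal A,\epsilon)$-data (the partition $\Cal A$ is only constrained to refine the iso-class partition of the summands and to have $\epsilon$ constant on its blocks), and one must confirm, after dividing by the various factorials and by $n!/\prod_{(\mbf l',d')}m_{(\mbf l',d')}!$ coming from the $S_n$-action defining $\scr D(\mbf l)$, that the coefficient of $\prod_i\rho_{\mbf l^i}(q^{d_i})$ really does assemble into a rational polynomial in $q$.
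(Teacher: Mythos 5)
Your approach is the paper's: the paper offers no written proof of Corollary~\ref{rednil4} and merely "deduces" it from Proposition~\ref{rednil3}, and you have filled in essentially the intended details — specialise to $Q=R_{\mbf l}(q)$, use Lemma~\ref{quivpar} and the two classes $\Cal Y$, observe that $k_{\Cal Y,d}(q)$ is polynomial in $q$, and collect terms according to $\scr D(\mbf l)$.

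Your closing worry, however, is not a real obstacle, and the gesture at "dividing by $n!/\prod m_{(\mbf l',d')}!$" is spurious. The first sum in Proposition~\ref{rednil3} is already over decompositions taken up to permutation, and $\scr D(\mbf l)$ is by definition a set of orbit representatives; no further normalisation enters. For $Q=R_{\mbf l}(q)$ each summand has a unique $\mF_{q^d}$-expansion (when one exists), so the requirement in the last sum that the $Q'_A$ be pairwise non-isomorphic forces $\Cal A$ to be precisely the partition of the index set into blocks on which the pair (iso-class of $Q_i$, $\epsilon(i)$) is constant. Thus once the decomposition and $\epsilon$ are fixed, $\Cal A$ is determined, and the preimage of a given $J\in\scr D(\mbf l)$ under your assignment $((\mbf m^i),\epsilon)\mapsto J$ is a finite set. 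Each element of that preimage contributes $\bigl(\prod_{A}|A|!\bigr)^{-1}$ times a product of falling factorials of the polynomials $k_{\Cal Y,d}(q)$, which is a polynomial in $q$ with rational coefficients; summing finitely many such terms is again such a polynomial, so $a_J$ and $b_J$ exist with no further verification required.
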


\section{Preliminary results}\label{parprem}

In this section we state several 
standard and straightforward results. We prove the last of these results; 
the first two are easy exercises. 

\begin{lem}\label{normal2} Let $G$ be a group acting on a set $Y$. Let $N$ be a normal subgroup of $G$, and let $S$ be the set of $N$-orbits on $Y$. The action of $G$ on $Y$ induces an action of $G/N$ on $S$ via $gN\circ Ny = Ngy$ 
(for all $g\in G$, $y\in Y$). Moreover, the $G$-orbits on $Y$ are 
in a one-to-one correspondence with the $G/N$-orbits on $S$.
\end{lem}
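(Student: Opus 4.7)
The plan is to verify that the proposed operation is a well-defined action of $G/N$ on $S$, and then to describe the bijection between $G$-orbits on $Y$ and $G/N$-orbits on $S$ explicitly.

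First I would check that $gN \circ Ny := Ngy$ does not depend on the choice of coset representatives. If $g_1 N = g_2 N$, then $g_2 = g_1 n$ for some $n \in N$, so $N g_2 y = N g_1 n y$; by normality, $g_1 n = n' g_1$ for some $n' \in N$, giving $N g_1 n y = N n' g_1 y = N g_1 y$. Independence from the representative of $Ny$ follows from the fact that $N g y = N g (n y)$ for any $n \in N$: indeed $g n = n'' g$ for some $n'' \in N$ (normality again), so $N g n y = N n'' g y = N g y$. Checking that $1_G N$ acts trivially and that $(g_1 N)(g_2 N) \circ Ny = (g_1 N) \circ ((g_2 N) \circ Ny)$ is then immediate from the definitions and from the fact that $G$ acts on $Y$.

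Next I would construct the bijection. Given a $G$-orbit $\mathcal{O} \subseteq Y$, observe that $\mathcal{O}$ is a union of $N$-orbits, since $N \le G$. Define $\Phi(\mathcal{O}) = \{ N y : y \in \mathcal{O} \} \subseteq S$. For $y_1, y_2 \in \mathcal{O}$ there is $g \in G$ with $y_2 = g y_1$, and then $(gN) \circ (N y_1) = N g y_1 = N y_2$, so $\Phi(\mathcal{O})$ is contained in a single $G/N$-orbit; conversely, any two $N$-orbits in $\Phi(\mathcal{O})$ are connected by the action of $G/N$ by the same calculation. Hence $\Phi(\mathcal{O})$ is exactly one $G/N$-orbit on $S$.

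For injectivity, suppose $\Phi(\mathcal{O}_1) = \Phi(\mathcal{O}_2)$ with $y_i \in \mathcal{O}_i$; then $N y_1 = N y_2'$ for some $y_2' \in \mathcal{O}_2$, so $y_1 = n y_2'$ for some $n \in N \le G$, putting $y_1$ and $y_2'$ in the same $G$-orbit and giving $\mathcal{O}_1 = \mathcal{O}_2$. For surjectivity, given a $G/N$-orbit $\mathcal{T}$ on $S$, pick $Ny \in \mathcal{T}$ and take $\mathcal{O}$ to be the $G$-orbit of $y$; then $\Phi(\mathcal{O}) = \mathcal{T}$ by the description of the $G/N$-action. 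There is no real obstacle here — the only point that requires any care is using the normality of $N$ twice to make sense of the action, and I would make that the central verification.
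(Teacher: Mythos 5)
Your proof is correct and complete. The paper states this lemma but declines to prove it, explicitly calling it (together with Lemma~\ref{actprod}) an easy exercise, so there is no argument of the authors' to compare against; your two uses of normality to establish well-definedness of the induced $G/N$-action, and your explicit map $\Phi$ from $G$-orbits on $Y$ to $G/N$-orbits on $S$ with separate injectivity and surjectivity checks, are exactly what is needed.
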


\begin{lem}\label{actprod} Let $G$ be a group acting on finite sets 
$X$ and $Y$. Suppose $S$ is a subset of $X\times Y$ preserved by $G$. 
Let $R$ be a complete set of representatives of $G$-orbits on $X$ (so each $G$-orbit on $X$ contains exactly one element of $R$). Then 
$$\gamma(G,S)=\sum_{x\in R} \gamma(\Stab_G (x), \{y\in Y: (x,y)\in S \}).$$
\end{lem}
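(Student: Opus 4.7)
The plan is to partition $S$ according to which $G$-orbit the first coordinate lies in, then count $G$-orbits piece by piece using the orbit-stabiliser correspondence.

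First I would introduce the projection $\pi : S \to X$, $(x,y) \mapsto x$, and note that it is $G$-equivariant since $S$ is $G$-stable. For each $x \in R$, let $S_x = \pi^{-1}(Gx) = \{(x',y)\in S : x' \in Gx\}$. These subsets are each $G$-invariant and partition $S$, so it suffices to prove that $\gamma(G, S_x) = \gamma(\Stab_G(x), \{y \in Y : (x,y) \in S\})$ for every $x\in R$; summing over $R$ then yields the claim.

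The key step is the bijection between $G$-orbits on $S_x$ and $\Stab_G(x)$-orbits on $F_x := \{y\in Y : (x,y)\in S\}$. I would define a map $\Phi$ sending a $\Stab_G(x)$-orbit $\Stab_G(x)\cdot y \subseteq F_x$ to the $G$-orbit $G\cdot(x,y) \subseteq S_x$. For well-definedness, if $y' = hy$ with $h\in \Stab_G(x)$, then $(x,y') = h\cdot(x,y)$, so both pairs lie in the same $G$-orbit. For surjectivity, any $(x',y')\in S_x$ has $x' = gx$ for some $g\in G$, so $g^{-1}\cdot(x',y') = (x, g^{-1}y')$, exhibiting a representative of the form $(x, y)$ with $y\in F_x$. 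For injectivity, suppose $G\cdot(x,y_1) = G\cdot(x,y_2)$; then there exists $g \in G$ with $(x, y_2) = g\cdot(x, y_1) = (gx, gy_1)$. Comparing first coordinates gives $g\in \Stab_G(x)$, and the second coordinates then give $y_2 = gy_1$, so $y_1$ and $y_2$ are $\Stab_G(x)$-equivalent.

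There is no real obstacle here; the only thing to be mildly careful about is that the map $\Phi$ is defined on orbits (not elements), which requires the well-definedness check above, and that one correctly identifies the fibre $F_x$ as a $\Stab_G(x)$-stable subset of $Y$ (which is immediate from $S$ being $G$-stable).
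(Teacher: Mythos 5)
Your proof is correct. The paper itself leaves Lemma~\ref{actprod} unproved (it is described as an easy exercise, with only Lemma~\ref{dimI} of the three preliminary lemmata given a proof), so there is no paper proof to compare against; your argument — decomposing $S$ into the $G$-invariant pieces $S_x = \pi^{-1}(Gx)$ and then establishing the orbit bijection $\Stab_G(x)\cdot y \leftrightarrow G\cdot(x,y)$ via the well-definedness, surjectivity, and injectivity checks you carry out — is exactly the standard route and is complete.
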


Recall that if $V$ and $W$ are vector spaces and 
$X\in \End(V)$, $Y\in \End(W)$, then 
$$
\Cal I(X,Y)=\{ T\in \Hom(V,W): TX=YT \}.
$$

\begin{lem}\label{dimI} Let $V$ and $W$ be finite dimensional vector spaces over a 
field $K$. Suppose $X\in \End(V)$ and $Y\in \End(W)$. Then 
$$
\dim \Cal I(X,Y) = \dim \Cal I(Y,X).
$$ 
\end{lem}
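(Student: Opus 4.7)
The plan is to reinterpret $\Cal I(X,Y)$ as a module $\Hom$ space. Endow $V$ and $W$ with the structures of $K[t]$-modules $V_X$ and $W_Y$, where $t$ acts as $X$ on $V$ and as $Y$ on $W$. A $K$-linear map $T\colon V\ra W$ is $K[t]$-linear iff $T(Xv)=Y(Tv)$ for all $v$, i.e.\ iff $TX=YT$; hence $\Cal I(X,Y)=\Hom_{K[t]}(V_X,W_Y)$ as $K$-vector spaces, and the claim reduces to showing that $\dim_K \Hom_{K[t]}(V_X,W_Y)=\dim_K \Hom_{K[t]}(W_Y,V_X)$.

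Since $V$ and $W$ are finite dimensional, $V_X$ and $W_Y$ are finitely generated torsion modules over the PID $K[t]$. By the structure theorem we may write $V_X\cong \bigoplus_i K[t]/(f_i)$ and $W_Y\cong \bigoplus_j K[t]/(g_j)$ for monic polynomials $f_i,g_j\in K[t]$. Using additivity of $\Hom$ in both variables, it suffices to prove that $\dim_K \Hom_{K[t]}(K[t]/(f),K[t]/(g))$ is symmetric in $f$ and $g$.

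The last step is an elementary computation. Any $K[t]$-module homomorphism $K[t]/(f)\ra K[t]/(g)$ is determined by the image $h+(g)$ of $1$, subject to the constraint $g\mid fh$. Setting $d=\gcd(f,g)$ and writing $f=df'$, $g=dg'$ with $\gcd(f',g')=1$, this constraint becomes $g'\mid h$, so the admissible elements $h+(g)$ form a subspace of $K[t]/(g)$ of $K$-dimension $\deg g-\deg g' = \deg d$. Since $\deg\gcd(f,g)$ is manifestly symmetric in $f$ and $g$, summing over pairs $(i,j)$ yields the result. There is no serious obstacle: once the module-theoretic reformulation is made, the argument is essentially a one-liner, and no delicate structural input beyond the classification of finitely generated modules over a PID is needed.
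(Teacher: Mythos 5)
Your proof is correct but takes a genuinely different route from the paper. The paper dualises: it observes that $T \mapsto T^*$ gives a linear bijection $\Cal I(Y,X) \ra \Cal I(X^*, Y^*)$, and then uses the fact that $X$ and $X^*$ (likewise $Y$ and $Y^*$) have the same rational canonical form, so $\dim \Cal I(X^*, Y^*) = \dim \Cal I(X, Y)$; chaining these gives the claim in two lines. Your argument instead identifies $\Cal I(X,Y)$ with $\Hom_{K[t]}(V_X, W_Y)$, decomposes $V_X$ and $W_Y$ into cyclic modules via the structure theorem for finitely generated modules over a PID, and reduces to the explicit symmetric formula $\dim_K \Hom_{K[t]}\bigl(K[t]/(f), K[t]/(g)\bigr) = \deg\gcd(f,g)$. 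Both arguments ultimately rest on the same structural input (the paper's ``same rational canonical form'' for a matrix and its transpose is itself a consequence of the PID classification), but they package it differently: the paper's duality trick is shorter and avoids any explicit computation, whereas your version is more hands-on and yields the extra information $\dim \Cal I(X,Y) = \sum_{i,j} \deg\gcd(f_i, g_j)$ as a by-product.
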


\begin{proof} Let $V^*$ and $W^*$ be the dual spaces to $V$ and $W$. 
Let $X^*\in \End(V^*)$ and $Y^*\in \End(W^*)$ be the maps dual to $X$ and $Y$:
$(X^* f)(v)= f(X(v))$ for all $f\in V^*$, $v\in V$. Then 
$\dim \Cal I(Y,X)= \dim\Cal I(X^*,Y^*)$: a linear bijection between 
$\Cal I(Y,X)$ 
and $\Cal I(X^*,Y^*)$ is given by assigning to each map its dual.  
Also, $X^*$ and $X$ have the same rational canonical form, as do
 $Y$ and $Y^*$. Thus,
$$
\begin{array}{cr}
\quad\qquad\qquad \dim\Cal I(X,Y)= \dim\Cal I(X^*,Y^*)= \dim\Cal I(Y,X). 
\qquad\qquad\quad  &  \qed
\end{array} 
$$
\renewcommand{\qed}{}\end{proof}

\section{Action on the dual space}\label{dualact}

Let $V$ be a finite dimensional vector space over a finite field $\mF_q$.  
If $w\in V^*$ and $v\in V$, 
we shall write $(w,v)$ for $w(v)$. 
Suppose a group $G$ acts on $V$ by linear maps, 
so a representation of $G$ on $V$ is given: 
$$(g,v) \mapsto gv.$$ 
This action gives rise to a representation of $G$ on $V^*$ in the usual way: 
$$((gw,v)=(w,g^{-1}v),$$
where $g\in G$, $w\in V^*$, $v\in V$. We shall rely on the following simple 
result, which is a weak version of Brauer's Theorem (\cite[11.9]{CRI}). 

\begin{lem} \label{duality} The number of $G$-orbits on $V$ is equal to the number of $G$-orbits on $V^*$.
\end{lem}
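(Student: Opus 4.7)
The natural approach is Burnside's (orbit-counting) lemma: since both actions are on finite sets (as $V$ and $V^*$ are finite, being finite-dimensional over $\mF_q$), the number of $G$-orbits on $V$ equals $\frac{1}{|G|}\sum_{g\in G} |\Fix_V(g)|$, and similarly for $V^*$. Thus it suffices to prove that $|\Fix_V(g)| = |\Fix_{V^*}(g)|$ for every $g\in G$.

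Fix $g\in G$. The first plan step is to identify the fixed subspace. On $V$ we simply have $\Fix_V(g) = \ker(g - I_V)$, so $\dim \Fix_V(g) = \dim V - \rank(g - I_V)$. On $V^*$, using the defining relation $(gw, v) = (w, g^{-1}v)$, an element $w\in V^*$ is $g$-fixed iff $(w, v) = (w, g^{-1}v)$ for every $v\in V$, i.e.\ iff $w$ vanishes on the image of $I_V - g^{-1}$ (equivalently, on the image of $g - I_V$, since $g$ is invertible). Hence $\Fix_{V^*}(g)$ is the annihilator in $V^*$ of $\im(g - I_V)$, which has dimension $\dim V - \rank(g - I_V)$.

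So the two fixed subspaces are $\mF_q$-vector spaces of equal dimension, and therefore of equal cardinality $q^{\dim V - \rank(g - I_V)}$. Summing over $g\in G$ and dividing by $|G|$ yields the lemma.

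The only subtlety is keeping the conventions straight between the action $g\cdot w = w\circ g^{-1}$ and the dual map of $g - I_V$; this is where one has to be careful, but it boils down to the standard fact that a linear map and its transpose have equal rank. No deep input beyond rank-nullity and Burnside is needed, which is why the result is only a weak version of the Brauer permutation lemma.
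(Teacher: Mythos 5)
Your proof is correct and follows essentially the same route as the paper: both apply the orbit-counting (Burnside) formula and reduce to showing $|\Fix_V(g)| = |\Fix_{V^*}(g)|$ via the rank of $g - I_V$. The paper phrases this step with matrices ($\rank(A-I)=\rank((A^t)^{-1}-I)$) while you phrase it with annihilators, but this is a cosmetic difference, not a different argument.
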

\begin{proof} By a well-known orbit-counting formula, 
$$\gamma(G,V)=\frac{1}{|G|}\sum_{g\in G} \Fix_V (g),$$
where $\Fix_V (g)$ is the number of elements of $V$ fixed by $g$. 
The same holds for $V^*$. 
Therefore, it suffices to show that $\Fix_V (g)=\Fix_{V^*} (g)$ 
for all $g\in G$. 

Let $g\in G$. Fix a basis of $V$, and let $A$ be the matrix of the action of $g$ with respect to this basis. Then $(A^t)^{-1}$ is the matrix of the action of 
$g$ on $V^*$ with respect to the dual basis. Since 
$\rank(A-I)=\rank(A^t-I)=\rank((A^t)^{-1}-I)$, 
$$
\qquad\qquad \Fix_V (g)=q^{n-\rank(A-I)}=q^{n-\rank((A^t)^{-1}-I)}=\Fix_{V^*} (g). 
\qquad \qquad \qed
$$
\renewcommand{\qed}{}\end{proof}

Let $Q=(E_0,E_1,\mbf U ,\bs\alpha)$ be a quiver representation. 
Let $a,b\in E_0$. 
Define an extension 
$\Omega(Q,a,b)=Q'=(E'_0, E'_1,\mbf U',\bs\alpha')$ of $Q$ as follows:
\begin{enumerate}[(i)]
\item $E'_0 = E_0 \sqcup \{ c \}$; 
\item $\dim U'_c = \dim U_a + \dim U_b$;
\item $E'_1=E_1\sqcup \{e_1,e_2\}$ where $\sigma(e_1)=a$, $\tau(e_1)=c=\sigma(e_2)$,
 $\tau(e_2)=b$; 
\item $\alpha'_e=\alpha_e$ for all $e\in E_1$;
\item $\alpha_{e_1}$ is injective, $\alpha_{e_2}$ is surjective, and 
$\im(\alpha_{e_1})=\ker(\alpha_{e_2})$, 
so the sequence
$$
0\lra U_a\stackrel{\alpha_{e_1}}{\lra} U'_c \stackrel{\alpha_{e_2}}{\lra} 
U_b \lra 0
$$
is exact.
\end{enumerate}  
Write $\pi$ for $\pi^{Q'}_Q:\End (Q')\ra \End(Q)$. Note that $\Aut(Q)$ acts 
on $\Hom(X_a,X_b)$ in a natural way: $\mbf g\circ R=g_b R g_a^{-1}$ 
($\mbf g\in \Aut(Q)$, $R\in \Hom(X_a,X_b)$). 
\begin{prop}\label{link}  If $\mbf X$ is an endomorphism of $Q$ and $H$ is
a subgroup of $\Aut(Q)$ that fixes $\mbf X$, then
$$\gamma(H^{Q'},\pi^{-1}(\mbf X)) = \gamma(H,\Cal{I}(X_a,X_b)).$$
\end{prop}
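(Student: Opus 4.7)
The plan is to make $\pi^{-1}(\mbf X)$ and its $H^{Q'}$-action completely explicit, peel off a normal subgroup that acts by translations, and then compare the resulting $H$-set with $\Cal I(X_a,X_b)$ via a duality argument.

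First, I would choose a vector-space splitting of the short exact sequence
$0\to U_a\stackrel{\alpha_{e_1}}{\to} U'_c \stackrel{\alpha_{e_2}}{\to} U_b \to 0$, identifying $U'_c\cong U_a\oplus U_b$. The conditions $X'_c\alpha_{e_1}=\alpha_{e_1}X_a$ and $\alpha_{e_2}X'_c=X_b\alpha_{e_2}$ then force
$$
X'_c=\begin{pmatrix} X_a & R\\ 0 & X_b \end{pmatrix}
$$
with $R\in\Hom(U_b,U_a)$ unconstrained, which identifies $\pi^{-1}(\mbf X)$ with $\Hom(U_b,U_a)$ as a set. Similarly, an element of $H^{Q'}$ is a pair $(\mbf g,T)$ with $\mbf g\in H$ and $g'_c=\begin{pmatrix} g_a & T\\ 0 & g_b\end{pmatrix}$, and a short conjugation computation, using that $H$ fixes $\mbf X$, gives
$$
(\mbf g,T)\cdot R = g_a R g_b^{-1} + (TX_b-X_aT)g_b^{-1}.
$$

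Next, I would observe that $N=\{(1,T):T\in\Hom(U_b,U_a)\}$ is the kernel of the forgetful map $H^{Q'}\to H$, hence a normal subgroup of $H^{Q'}$ acting on $\pi^{-1}(\mbf X)$ by the translations $R\mapsto R+\phi(T)$, where $\phi(T):=TX_b-X_aT$. By Lemma~\ref{normal2}, $\gamma(H^{Q'},\pi^{-1}(\mbf X))$ therefore equals the number of $H$-orbits on $\Hom(U_b,U_a)/\im\phi$, with $H$ acting by $\overline R\mapsto\overline{g_aRg_b^{-1}}$.

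The final ingredient is duality. The $H$-invariant, nondegenerate trace pairing $(R,S)\mapsto \Tr(RS)$ on $\Hom(U_b,U_a)\times\Hom(U_a,U_b)$ identifies the second factor with the dual of the first as an $H$-module. The elementary identity
$$
\Tr(\phi(T)S)=\Tr\!\big(T(X_bS-SX_a)\big)
$$
then shows that the annihilator of $\im\phi$ is exactly $\Cal I(X_a,X_b)$, so $\Hom(U_b,U_a)/\im\phi\cong \Cal I(X_a,X_b)^{*}$ as $H$-modules. Applying Lemma~\ref{duality} replaces the orbit count on the dual by one on $\Cal I(X_a,X_b)$ itself, completing the proof. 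The one subtle point is that $\Cal I(X_a,X_b)$ appears rather than $\Cal I(X_b,X_a)$: this comes from the transposition induced by the trace pairing, which is precisely why the duality lemma was proved earlier.
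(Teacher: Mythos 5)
Your proof is correct and follows the same skeleton as the paper's: identify $\pi^{-1}(\mbf X)$ with $\Hom(U_b,U_a)$ via a splitting, peel off the abelian kernel $N$ acting by translations by $\im\phi$, apply Lemma~\ref{normal2} to reduce to the $H$-action on $\Hom(U_b,U_a)/\im\phi$, identify the dual of this quotient with the annihilator of $\im\phi$ via the trace pairing, and finish with Lemma~\ref{duality}. The one place where you genuinely improve on the paper is the identification of the annihilator. The paper proves only the inclusion $\Cal I(X_a,X_b)\subseteq D^\perp$ and then forces equality by a dimension count, $\dim D^\perp = km - \dim D = \dim\Cal I(X_b,X_a) = \dim\Cal I(X_a,X_b)$, where the last step invokes Lemma~\ref{dimI} (the dual/rational-canonical-form argument that $\dim\Cal I(X,Y)=\dim\Cal I(Y,X)$). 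You instead observe that $\Tr(\phi(T)S)=\Tr\bigl(T(X_bS-SX_a)\bigr)$ holds for all $T$, so nondegeneracy of the trace pairing on $\Hom(U_b,U_a)\times\Hom(U_a,U_b)$ gives $D^\perp=\Cal I(X_a,X_b)$ as an \emph{equivalence}, not merely an inclusion. This makes Lemma~\ref{dimI} unnecessary in this proof, shortens the argument, and explains intrinsically why it is $\Cal I(X_a,X_b)$ rather than $\Cal I(X_b,X_a)$ that appears.
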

\begin{proof} Let $k=\dim U_a$, $m=\dim U_b$.  
Let $\beta=\alpha_{e_1}$.
Let $V=\beta(U_a)\le U'_c$. 
Choose a complement $W$ of $V$ in $U'_c$, so that 
$U'_c=V\oplus W$. Then
$\epsilon:=\alpha_{e_2}|_{W}$ is a bijective linear map from $W$ onto $U_b$. 

For any $R\in\Hom(U_b,U_a)$, let $\phi (R)$ be the unique element of
$\pi^{-1}(\mbf X)$ such that, for $v\in V$, $w\in W$,
$$
\phi(R)_c (v+w)= \beta X_a \beta^{-1} \cdot v + 
\epsilon^{-1} X_b \epsilon \cdot w 
+ \beta R \epsilon \cdot w.$$ 
Less formally, 
if one chooses bases for $U_a$ and $U_b$, 
uses $\beta$ and $\epsilon^{-1}$ to get bases of $V$ and $W$,
combines these to get a basis
 of $U'_c$ and represents linear operators as matrices using these bases, then
$$
\phi(R)_c=\begin{pmatrix} 
X_a & R \\
0 & X_b 
\end{pmatrix}.
$$ 
Clearly, $\phi$ is a bijection from $\Hom(U_b,U_a)$ onto $\pi^{-1}(\mbf X)$.
For $T\in\Hom(U_b,U_a)$, define 
$\psi(T)\in H^{Q'}$ as follows: $\psi(T)_d=I_{U_d}$ for $d\ne c$, and 
$$
\psi(T)_c (v+w) = v + w + \beta T \epsilon \cdot w 
\quad \forall v\in V, w\in W.
$$
As a matrix, 
$$
\psi(T)_c = \begin{pmatrix} I_k & T \\
0 & I_m 
\end{pmatrix}.
$$  
Consider the surjective group homomorphism 
$\pi':=\pi|_{H^{Q'}}: H^{Q'}\ra H$. Let $A=\ker\pi'$. 
Then $\psi:\Hom(U_b,U_a)\ra A$ is an isomorphism of abelian groups. 
A direct calculation (e.g. using matrices) 
produces a formula for the action of $A$ on 
$\pi^{-1}(\mbf X)$:
$$
\psi(T) \phi(R) \psi(T)^{-1} = \phi(R + T X_b - X_a T).
$$
Let 
$$D=\{T X_b - X_a T: \: T\in \Hom(U_b,U_a) \}.$$
Then the map $\phi^{-1}: \pi^{-1}(\mbf X)\ra \Hom(U_b,U_a)$ establishes a one-to-one correspondence between $A$-orbits on $\pi^{-1}(\mbf X)$ 
and cosets of $D$ in $\Hom(U_b,U_a)$. 
Hence, the maps $\pi'$ and $\phi$ establish an isomorphism between the action
of $H^{Q'}/A$ on the set of $A$-orbits in $\pi^{-1}(\mbf X)$ and the natural 
action of $H$ on $\Hom(U_b,U_a)/D$.
It follows, by Lemma \ref{normal2} (applied to the normal subgroup $A$ of $H^{Q'}$), that
$$\gamma (H^{Q'},\pi^{-1} (\mbf X)) = \gamma(H, \Hom(U_b,U_a)/D).$$ 

The dual space to $\Hom(U_b,U_a)$ may be identified with $\Hom(U_a,U_b)$, where
$$
(S,R)=\tr(SR) \qquad \forall R\in \Hom(U_b,U_a), \; S\in \Hom(U_a,U_b),
$$
and the dual action of $H$ on the space $\Hom(U_a,U_b)$ is induced by 
the natural action of 
$\GL(U_a)\times \GL(U_b)$ on this space.
Hence the dual space of $\Hom(U_b,U_a)/D$ is 
$$D^{\perp}=\{S\in \Hom(U_a,U_b): \tr(RS)=0 \text{ for all } R\in D \}.$$

Any $S\in \Cal{I}(X_a,X_b)$ belongs to $D^{\perp}$. Indeed, for all 
$T\in\Hom(U_b,U_a)$, 
\begin{eqnarray*}
\tr(S(T X_b - X_a T))& = & \tr(S T X_b)-\tr(S X_a T)=\tr(X_b ST)-\tr(S X_a T) \\
& = & \tr((X_b S - S X_a)T)=0.
\end{eqnarray*}
Moreover,
$$
\dim D^{\perp}=km-\dim D=\dim \Cal I(X_b,X_a) = \dim \Cal I(X_a,X_b).
$$
(The last equality holds by Lemma~\ref{dimI}.) It follows that
$D^{\perp}=\Cal I(X_a,X_b)$.
By Lemma~\ref{duality}, 
$$\gamma(H,\Hom(U_b,U_a)/D)=\gamma(H,D^{\perp})=\gamma(H,\Cal{I}(X_a,X_b)),
$$
and the result follows.
\end{proof}

Proposition~\ref{link} will allow us to replace the quiver $Q'=\Omega(Q,a,b)$ 
with quivers obtained from $Q$ by adding an arrow from $U_a$ to $U_b$ when counting $\Aut(Q')$-orbits. A more usable form of this proposition is given by the corollary below.  

Let $Q=(E_0,E_1,\mbf U,\bs\alpha)$, $a,b\in E_0$, $Q'=\Omega(Q,a,b)$, 
$\pi=\pi^{Q'}_Q$ be as above. Let $G$ be a subgroup of $\Aut(Q)$. 
Let $\Xi=\Xi(Q,G,a,b)$ be a complete set of representatives of 
$G$-orbits on $\Hom(U_a,U_b)$ (the choice of $\Xi$ does not matter).  
For each $S\in\Xi$, 
let $Q^S$ be the quiver 
$(E_0, E_1 \sqcup \{ f \}, \mbf U, \alpha^S)$  where 
$\sigma(f)=a$, $\tau(f)=b$, $\alpha^S_e=\alpha_e$ for $e\in E_1$, 
and $\alpha^S_f=S$. That is, $Q^S$ is obtained from $Q$ by adding the 
linear map $S:U_a\ra U_b$.   
\begin{cor}\label{genlink} 
Let $B$ be a subset of $\End(Q)$ preserved by a subgroup $G$ of $\Aut(Q)$. Then
$$
\gamma(G^{Q'},\pi^{-1} (B)) = 
\sum_{S\in\Xi} \gamma (G\cap \Aut(Q^S), B\cap \End(Q^S)). 
$$ 
\end{cor}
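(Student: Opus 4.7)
The plan is to introduce the auxiliary $G$-set
$$
T = \{(\mbf X, S) \in B \times \Hom(U_a, U_b) : SX_a = X_b S\}
$$
(with the product action, using $\mbf g \circ S = g_b S g_a^{-1}$ on the second factor) and to show that both sides of the claimed identity equal $\gamma(G, T)$ by two applications of Lemma~\ref{actprod} to the single set $T$.

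For the right-hand side, I would first unpack the definition of $Q^S$. One has
$$
\Aut(Q^S) = \{\mbf g \in \Aut(Q) : g_b S = S g_a\} = \Stab_{\Aut(Q)}(S),
$$
and $\End(Q^S) = \{\mbf X \in \End(Q) : X_b S = S X_a\}$. Hence $G \cap \Aut(Q^S) = \Stab_G(S)$ and $B \cap \End(Q^S) = \{\mbf X \in B : (\mbf X, S) \in T\}$. Applying Lemma~\ref{actprod} to $T \subseteq \Hom(U_a, U_b) \times B$ with representatives taken in $\Xi$ gives
$$
\gamma(G, T) = \sum_{S \in \Xi} \gamma(\Stab_G(S), B \cap \End(Q^S)) = \sum_{S \in \Xi} \gamma(G \cap \Aut(Q^S), B \cap \End(Q^S)).
$$

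For the left-hand side, I would use that $\pi$ is equivariant for the surjection $\pi|_{G^{Q'}} : G^{Q'} \to G$, and that $\mbf g \cdot \pi^{-1}(\mbf X) = \pi^{-1}(\pi(\mbf g) \circ \mbf X)$, so the setwise stabilizer in $G^{Q'}$ of the fiber $\pi^{-1}(\mbf X)$ equals $\Stab_G(\mbf X)^{Q'}$. A standard fiberwise orbit decomposition (or Lemma~\ref{actprod} applied to the graph $\{(\mbf X, \mbf Y) \in B \times \pi^{-1}(B) : \pi(\mbf Y) = \mbf X\}$) then yields, for any set $R_B$ of $G$-orbit representatives on $B$,
$$
\gamma(G^{Q'}, \pi^{-1}(B)) = \sum_{\mbf X \in R_B} \gamma(\Stab_G(\mbf X)^{Q'}, \pi^{-1}(\mbf X)).
$$
By Proposition~\ref{link} with $H = \Stab_G(\mbf X)$, each summand is $\gamma(\Stab_G(\mbf X), \Cal I(X_a, X_b))$. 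A second application of Lemma~\ref{actprod} to $T$, this time with representatives in $R_B$, identifies the total with $\gamma(G, T)$. Equating the two expressions for $\gamma(G, T)$ gives the corollary.

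The main obstacle is the bookkeeping on the left-hand side: verifying that $\Stab_G(\mbf X)^{Q'}$ really is the setwise stabilizer of the fiber $\pi^{-1}(\mbf X)$ in $G^{Q'}$, and that the $G^{Q'}$-orbits on $\pi^{-1}(B)$ decompose fiber by fiber over the $G$-orbits on $B$. After that, the proof reduces to Proposition~\ref{link} (to convert fiber orbit counts into orbit counts on $\Cal I(X_a, X_b)$) together with two applications of Lemma~\ref{actprod} to the common set $T$.
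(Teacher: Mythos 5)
Your proof is correct and is essentially the paper's own argument: your set $T$ is the paper's set $Z$ (with the factors swapped), and you use the same chain of three ingredients — Lemma~\ref{actprod} applied to the graph of $\pi|_{\pi^{-1}(B)}$, Proposition~\ref{link} fiberwise, and Lemma~\ref{actprod} applied to $T=Z$ in two ways — merely rephrased as "both sides equal $\gamma(G,T)$."
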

\begin{proof} Let $Y$ be a complete set of representatives of $G$-orbits on 
$B$.
Let 
$$
Z=\{ (S,\mbf X)\in \Hom(U_a,U_b)\times B : S X_a = X_b S \}.
$$ 
Then 
\begin{eqnarray*}
\gamma(G^{Q'},\pi^{-1} (B) ) & = & 
\sum_{\mbf X\in Y} \gamma (\Stab_G (\mbf X)^{Q'}, \pi^{-1}(\mbf X)) \\
 & = & \sum_{\mbf X\in Y} \gamma (\Stab_G (\mbf X), \Cal{I} (X_a,X_b)) \\
 & = & \gamma (G, Z) = 
\sum_{S\in\Xi} \gamma (G\cap \Aut(Q^S), B\cap \End(Q^S)). 
\end{eqnarray*}
The first equality holds by Lemma \ref{actprod}, applied to the set
$$\{ (\mbf X, \mbf X')\in B\times\End(Q') : \pi(\mbf X')=\mbf X \}.$$
The second one follows from Proposition \ref{link}. 
The third and fourth equalities follow from Lemma \ref{actprod}, 
applied to the set $Z$ in two different ways. 
\end{proof}

The rest of this section is devoted to an informal sketch of a proof of  Theorem~\ref{pargen} in the 
special case when $\Cal Y$ is the class of nilpotent matrices. 
A formal proof of a more general statement is 
in the next section. Let $Q$ be the quiver
representation consisting of vector spaces $U_a=\mF_q^k$ and $U_b=\mF_q^m$ without any arrows. Let
$Q'=\Omega(Q,a,b)$, so $Q'$ is the quiver representation
$$
\xymatrix{
\mF_q^k\ar@{^{(}->}[r] & \mF_q^{k+m}\ar@{->>}[r] & \mF_q^m 
}
$$
where the first map is injective, the second one is surjective, and the sequence is 
exact at $\mF_q^{k+m}$.
It is not difficult to see that $\rho_{(k,m)}(q)=\theta(Q')$. The orbit of an element 
$S_1\in \Hom(\mF_q^k,\mF_q^m)$
with respect to the action of $\Aut(Q)=\GL_k (q)\times \GL_m (q)$ is determined by $\rank (S_1)$. 
Thus, by Corollary~\ref{genlink} (applied to $G=\Aut(Q)$ and $B=N(Q)$), $\rho_{(k,m)}(q)=\theta(Q')$ may be expressed as a sum, with one summand for each possible value of $\rank(S_1)$. If $S_1=0$, we get the summand $\theta(Q)=p(k)p(m)$.
Otherwise, the summand corresponding to $S_1$ is $\theta(Q_1)$ where $Q_1$ is the quiver representation
$$
\mF_q^k \stackrel{S_1}{\lra} \mF_q^m. 
$$
Let $\lda_1=\rank S_1$. Then $\ker S_1$ and $\im S_1$ may be identified with $\mF_q^{k-\lda_1}$ and 
$\mF_q^{\lda_1}$ respectively.
It follows from Lemma~\ref{KI} that $\theta(Q_1)=\theta(\ol{Q}_1)$ where 
$\ol{Q}_1$ is the quiver representation
$$
\xymatrix{
\mF_q^{k-\lda_1} \ar@{^{(}->}[r] & \mF_q^k \ar@{->>}[r]^{S_1} &
\mF_q^{\lda_1}\ar@{^{(}->}[r] & \mF_q^m  
}
$$
with the sequence exact at $\mF_q^k$. We apply Corollary~\ref{genlink} again:
$\theta(\ol{Q}_1)$ may be expressed as a sum of terms corresponding to maps 
$S_2\in \Hom(\mF_q^{k-\lda_1},\mF_q^{\lda_1})$ of varying ranks. (Again, we have one map of each 
possible rank.) Thus, we branch into cases corresponding to the possible values of $\lda_2:=\rank S_2$. 

We continue this argument inductively, considering maps 
$S_3,S_4,\ldots$ of ranks $\lda_i:=\rank S_i$ 
until $S_{s+1}=0$ for some $s$. 
(Here, $S_{i+1} : \mF_q^{k-\lda_1-\cdots-\lda_i} \ra \mF_q^{\lda_i}$.) Ultimately, we express 
$\theta(Q')=\rho_{(k,m)}(q)$ as a sum of terms indexed by partitions $\lda=(\lda_1,\ldots,\lda_s)$. 
The term corresponding to $\lda$ is equal to $\theta(Q_s)$ where $Q_s$ is the quiver representation
$$
\xymatrix{
\mF_q^{k-|\lda|} & \mF_q^{\lda_s} \ar@{^{(}->}[r] & \mF_q^{\lda_{s-1}}\ar@{^{(}->}[r] 
& \cdots \ar@{^{(}->}[r] & \mF_q^{\lda_1} \ar@{^{(}->}[r] & \mF_q^m. 
}
$$  
It follows from Lemma~\ref{quivpar} that $\theta(Q_s)=p(k-|\lda|) \nu_{\lda}^m (q)$. Putting $j=|\lda|$,
we obtain
$$
\rho_{(k,m)}(q)= \sum_{j=0}^k p(k-j) \sum_{\lda\in \Cal P^m_j} \nu_{\lda}^m (q),
$$ 
as stated in Theorem~\ref{pargen}.

\section{An inductive argument}\label{indarg} 

Fix a positive integer $m$. Let $W$ be an $m$-dimensional vector space over $\mF_q$. 
Let $G$ be a subgroup of $\GL(W)$, and let $A\subseteq \End(W)$ be a 
set preserved by $G$. 

If $\lambda=(\lambda_1,\ldots,\lambda_s)\in \Cal P^m$, 
let $\Cal F_{\lambda}$ be the set of flags 
$\mbf W = (W_1,W_2,\ldots,W_s)$ such that
$$
W\ge W_1 \ge W_2 \ge \cdots \ge W_s > 0
$$
and $\dim W_i = \lda_i$.
If $\mbf W=(W_1,\ldots,W_s)\in \Cal F_{\lda}$ and
$\mbf W'=(W_1,\ldots,W_{s+1}) \in \Cal F_{\lda'}$ for an appropriate 
$\lda'=(\lda_1,\ldots,\lda_{s+1})$ (that is, the first $s$ subspaces are the same for $\mbf W$ and $\mbf W'$), we shall write $\mbf W' \succ \mbf W$.  
For each $\lambda\in \Cal P^m$, choose a complete set $F_{\lambda}$ 
of representatives of $G$-orbits on $\Cal F_{\lambda}$. 
We may assume that these choices satisfy the following property: 
if $\mbf W\in \Cal F_{\lda}$, $\mbf W' \in  F_{\lda'}$ and 
$\mbf W'\succ \mbf W$, then $\mbf W \in F_{\lda'}$. 
Let $\lda\in \Cal P^m$. Let
$$
T_{\lda}(A)= \{ (\mbf W,R)\in \Cal F_{\lambda}\times A : \;
R(W_i)\subseteq W_i \text{ for all } i \}, \text{ and let }
$$
$$\xi_{\lambda}(G,A)=\gamma(G,T_{\lda}(A)).$$   
Let $\lda=(\lda_1,\ldots,\lda_s)\in \Cal P^m$, and 
let $\mbf W=(W_1,\ldots,W_s)\in\Cal F_{\lda}$. Let $Q_{\lda}(\mbf W)$ 
be the quiver representation 
$$
\xymatrix{
W_s\ar@{^{(}->}[r] &  W_{s-1}\ar@{^{(}->}[r] &  \cdots\ar@{^{(}->}[r] & 
 W_1\ar@{^{(}->}[r] &  W
}
$$
where the arrows are the inclusion maps (obtained by restricting $I_W$).
If $s=0$ (so $\lda=()$), we get the quiver representation 
$Q_{()}$ consisting just of the space
 $W$ without any arrows. 

Let $\Cal Y$ be a class of linear operators over $\mF_q$. 
We shall assume that $A\subseteq \Cal Y$.
Let 
$Q$ be an extension of $Q_{()}$. 
Let 
$\pi=\pi^{Q}_{Q_{()}}: \End(Q)\ra \End(Q_{()})$ be the natural projection map. 
Recall that 
\begin{eqnarray*}
G^Q & = & \pi^{-1} (G) \cap \Aut(Q), \qquad \text{and let} \\
A^Q_{\Cal Y} & = & \pi^{-1} (A)\cap \End_{\Cal Y}(Q).
\end{eqnarray*}

By Lemma~\ref{quivpar}, for any $\mbf W\in \Cal F_{\lda}$, 
$\Stab_G (\mbf W)$-orbits on 
$\{R\in A: R(W_i)\subseteq W_i \;\forall i\}$
are in a one-to-one correspondence with $G^{Q_{\lda}(\mbf W)}$-orbits on 
$A^{Q_{\lda}(\mbf W)}$.
It follows, by Lemma~\ref{actprod}, that
\begin{equation}\label{xi}
\xi_{\lda} (G,A)= 
\sum_{\mbf W\in F_{\lda}} \gamma(G^{Q_{\lda}(\mbf W)},A^{Q_{\lda}(\mbf W)}).
\end{equation}

Let $\mbf W\in \Cal F_{\lda}$ and $k\in \mZ_{\ge 0}$. 
Let $D^{k}_{\lda}(\mbf W)$ be the quiver representation
$$
\xymatrix{
V & & W_s\ar@{^{(}->}[r] &  W_{s-1} \ar@{^{(}->}[r] & \cdots 
\ar@{^{(}->}[r] &  W_1 \ar@{^{(}->}[r] & W,
}
$$
where $\dim V=k$ and the maps are as in $Q_{\lda}(\mbf W)$ (i.e. injective).
Let $Q_{\lda}^k (\mbf W)$ be the quiver representation 
$\Omega (D_{\lda}^k (\mbf W), a, b)$ 
(as defined in the previous section) where $a$ and $b$ correspond to $V$ and
 $W_s$ respectively. Then $Q_{\lda}^k (\mbf W)$ 
may be depicted as 
$$
\xymatrix{
V \ar@{^{(}->}[r] & Z \ar@{->>}[r] &  
W_s \ar@{^{(}->}[r] & W_{s-1} \ar@{^{(}->}[r] &  \cdots 
\ar@{^{(}->}[r] &  W_1 \ar@{^{(}->}[r] & W
}
$$ 
(Note that $\dim Z=k+\lda_s$ and the sequence is exact at $Z$.)
Consider again the case $\lda=()$. (Then $\mbf W$ is the empty sequence and is omitted
 from the notation.) 
The corresponding quiver representation 
$D_{()}^k$ does not have any arrows and consists 
just of the vector spaces $V$ ($\dim V=k$) and $W$. 
The quiver representation $Q_{()}^k$ is 
\begin{equation}\label{Qk}
\xymatrix{
V \ar@{^{(}->}[r] & Z \ar@{->>}[r] &  W,
}  
\end{equation}
where the sequence is exact at $Z$.
Write $D^k$ for $D_{()}^k$ and $Q^k$ for $Q_{()}^k$. Let $G^{(k)}=G^{Q^k}$ and 
$A^{(k)}_{\Cal Y}=A^{Q^k}_{\Cal Y}$. Recall that $c(k,\Cal Y)$ is the number
of $\GL(V)$-orbits on $\End_{\Cal Y} (V)$. We shall
prove the following result using Corollary~\ref{genlink}.
\begin{lem}\label{indlem}
 Let $\lda=(\lda_1,\ldots,\lda_s)\in \Cal P^m$ and $k\in \mZ_{\ge 0}$. 
Let $\mbf W \in F_{\lda}$. Then  
\begin{eqnarray*}
\gamma\left(G^{Q_{\lda}^k (\mbf W)}_{\vph Y}, A^{Q_{\lda}^k (\mbf W)}_{\Cal Y}\right)  & = & 
c(k,\Cal Y)
\gamma\!\left(G^{Q_{\lda}(\mbf W)}_{\vph Y}, A^{Q_{\lda}(\mbf W)}_{\Cal Y} \right) \\
& + &  \sum_{\lda_{s+1}=1}^{\min(\lda_s,k)} 
\sum_{\substack{\mbf W'\in F_{\lda'} \\ \mbf W' \succ \mbf W}}
\gamma\left(G^{Q_{\lda}^{k-\lda_{s+1}}(\mbf W')}_{\vph Y}, 
A_{\Cal Y}^{Q_{\lda}^{k-\lda_{s+1}} (\mbf W')} \right),
\end{eqnarray*}
where $\lda'=(\lda_1,\ldots,\lda_s,\lda_{s+1})$.
\end{lem}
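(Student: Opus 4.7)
My plan is to apply Corollary~\ref{genlink} to the extension $Q^k_\lda(\mbf W) = \Omega(D^k_\lda(\mbf W), a, b)$, where $a, b$ are the nodes corresponding to $V$ and $W_s$. A preliminary observation is needed: letting $\pi$ denote the projection $\End(Q^k_\lda(\mbf W)) \to \End(D^k_\lda(\mbf W))$, for any $\mbf X \in \End(Q^k_\lda(\mbf W))$ a choice of splitting of the exact sequence $0 \to V \to Z \to W_s \to 0$ represents $X_Z$ as a block upper triangular matrix with diagonal blocks $X_V$ and $X_{W_s}$; its eigenvalues over $\ol{\mF}_q$ are thus precisely those of $X_V$ together with those of $X_{W_s}$. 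Since $\Cal Y=\Cal Y_F$ is defined by an eigenvalue condition, $X_Z \in \Cal Y$ iff $X_V, X_{W_s} \in \Cal Y$, giving $A^{Q^k_\lda(\mbf W)}_{\Cal Y} = \pi^{-1}(A^{D^k_\lda(\mbf W)}_{\Cal Y})$. Corollary~\ref{genlink} therefore expresses the left-hand side of the lemma as a sum indexed by a set $\Xi$ of $G^{D^k_\lda(\mbf W)}$-orbit representatives on $\Hom(V, W_s)$.

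The next step is to parameterise $\Xi$. Since $V$ is an isolated node of $D^k_\lda(\mbf W)$, the group $G^{D^k_\lda(\mbf W)}$ splits as $\GL(V) \times G^{Q_\lda(\mbf W)}$, with the second factor acting on $W_s$ via its restriction map; the image of this restriction in $\GL(W_s)$ coincides with the image of $\Stab_G(\mbf W)$. The $\GL(V)$-action being transitive on injections (modulo kernel) means that orbits are classified by $r := \rank(S) \in [0, \min(k, \lda_s)]$ together with, for $r \ge 1$, the $\Stab_G(\mbf W)$-orbit of $\im(S) \le W_s$. By the compatibility assumed of $F_{\lda'}$, these latter orbits are in bijection with elements $\mbf W' \in F_{\lda'}$ satisfying $\mbf W' \succ \mbf W$, where $\lda' = (\lda_1, \ldots, \lda_s, r)$. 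I would thus take $\Xi$ to consist of the zero map together with, for each admissible $\lda_{s+1}$ and $\mbf W'$, a specific $S$ factoring as $V \twoheadrightarrow W_{s+1} \hookrightarrow W_s$.

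It remains to evaluate each summand. For $S = 0$ the added arrow imposes no constraint, so $(D^k_\lda(\mbf W))^0$ agrees with $D^k_\lda(\mbf W)$; the product structure $G^{D^k_\lda(\mbf W)} = \GL(V) \times G^{Q_\lda(\mbf W)}$ and the isolation of $V$ factor this contribution as $c(k, \Cal Y) \cdot \gamma(G^{Q_\lda(\mbf W)}, A^{Q_\lda(\mbf W)}_{\Cal Y})$. For nonzero $S$ with $\im(S) = W_{s+1}$, I would apply Lemma~\ref{KI} twice: first the $I$-construction to factor $S$ through $W_{s+1}$ by inserting $W_{s+1}$ as a new node, and then the $K$-construction on the resulting surjection $V \twoheadrightarrow W_{s+1}$ to insert its kernel $V'$ of dimension $k - \lda_{s+1}$. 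Each construction yields a ring isomorphism on endomorphisms; the resulting quiver representation will match $Q^{k-\lda_{s+1}}_{\lda'}(\mbf W')$, so the corresponding summand equals $\gamma(G^{Q^{k-\lda_{s+1}}_{\lda'}(\mbf W')}, A^{Q^{k-\lda_{s+1}}_{\lda'}(\mbf W')}_{\Cal Y})$. Summation over $\Xi$ then yields the claimed identity.

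The main obstacle will be the last step: I must check that the ring isomorphisms supplied by the two applications of Lemma~\ref{KI} restrict to bijections identifying $G^{D^k_\lda(\mbf W)} \cap \Aut((D^k_\lda(\mbf W))^S)$ with $G^{Q^{k-\lda_{s+1}}_{\lda'}(\mbf W')}$ and the corresponding $\Cal Y$-endomorphism subsets pulled back from $A$, so that orbit counts genuinely transport. This amounts to tracing through how the constraint $g_W \in G$ and the condition $X_W \in A$ are preserved by the canonical inclusions and surjections attached by the $I$ and $K$ operations.
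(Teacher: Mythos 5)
Your proposal follows essentially the same route as the paper's proof: apply Corollary~\ref{genlink} to $\Omega(D^k_\lda(\mbf W), a, b)$, exploit the product decomposition $G^{D^k_\lda(\mbf W)} = \GL(V) \times G^{Q_\lda(\mbf W)}$ to parameterise $\Xi$ by rank and $\Stab_G(\mbf W)$-orbit of image, handle $S = 0$ via the disconnected-union factorisation, and handle $S \ne 0$ via Lemma~\ref{KI} applied twice to insert the image and kernel of $S$ (the paper composes $I \circ K$ at the arrow $e$, you compose $K \circ I$; both yield the same quiver $V' \hookrightarrow V \twoheadrightarrow W_{s+1} \hookrightarrow W_s \hookrightarrow \cdots$). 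Your preliminary observation that $A^{Q^k_\lda(\mbf W)}_{\Cal Y}$ is the full preimage of $A^{D^k_\lda(\mbf W)}_{\Cal Y}$ is exactly the parenthetical the paper makes after \eqref{indlem1}, and the "main obstacle" you flag — that the ring isomorphisms of Lemma~\ref{KI} carry the subgroup $G^{L^\alpha}$ to $G^O$ and the $\Cal Y$-endomorphism sets to each other — is indeed the point, but it is routine (restrictions and quotients of a $\Cal Y$-endomorphism stay in $\Cal Y$, and the projections to $\End(W)$ commute with the identifications), so the paper states it without elaboration.
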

\begin{proof} 
Let $\Phi$ be the set of subspaces $W_{s+1}$ of $W_s$ such that 
$$
\mbf W':=(W_1,\ldots,W_s,W_{s+1})\in F_{(\lda_1,\ldots,\lda_s,\lda_{s+1})}
$$
for some $\lda_{s+1}\in [1,k]$ (of course, then $\dim W_{s+1}=\lda_{s+1}$). 
Then $\Phi$ is a complete set of representatives of $\Stab_G (\mbf W)$ 
on non-zero subspaces of $W_s$ of dimension at most $k$. 
For each $W_{s+1}\in \Phi$, choose
a linear map $\alpha_{W_{s+1}}:V \ra W_s$ with image $W_{s+1}$.

Let $H=G^{D_{\lda}^k (\mbf W)}$. Then $H=\GL(V)\times G^{Q_{\lda}(\mbf W)}$. Thus,
any two elements of $\Hom(V,W_s)$ are 
$H$-conjugate if and only if their images in $W_s$ are 
$\Stab_G (\mbf W)$-conjugate. Let $\Xi$ be the set consisting of the zero map 
$V\ra W_s$ and all the maps $\alpha_{W_{s+1}}$ where $W_{s+1}$ runs through $\Phi$. 
Then $\Xi$ is a complete set of representatives of $H$-orbits on $\Hom(V,W_s)$. 
 For each $\alpha\in\Xi$, let $L^{\alpha}$ be the quiver
obtained from $D_{\lda}^k (\mbf W)$ by adding the map $\alpha:V\ra W_s$, that is,
$$
\xymatrix{
V \ar[r]^{\alpha} &  W_s \ar@{^{(}->}[r] &  W_{s-1} 
\ar@{^{(}->}[r] & \cdots \ar@{^{(}->}[r] & W_1 \ar@{^{(}->}[r] & W.
}
$$  
By Corollary~\ref{genlink}, applied to the group
$G^{D_{\lda}^k (\mbf W)}\le \Aut (D_{\lda}^k (\mbf W))$ and to the set
$A_{\Cal Y}^{D_{\lda}^k (\mbf W)} \subseteq \End(D_{\lda}^k (\mbf W))$, 
\begin{equation}\label{indlem1}
\gamma\left(G^{Q_{\lda}^k (\mbf W)}_{\vph Y}, 
A^{Q_{\lda}^k (\mbf W)}_{\Cal Y}\right) = 
\sum_{\alpha\in\Xi} \gamma\left(G^{L^{\alpha}}_{\vph Y}, A^{L^{\alpha}}_{\Cal Y}
\right).
\end{equation}
(Here we use the fact that, if $\mbf X\in \End(Q_{\lda}^k (\mbf W))$
and the actions of $\mbf X$ on $V$ and $W_s$ are 
$\Cal Y$-endomorphisms, then so is the action of $\mbf X$ on $Z$.) 

If $\alpha=0$, then, obviously,
$$\gamma\left(G^{L^{\alpha}}_{\vph Y}, A^{L^{\alpha}}_{\Cal Y} \right) =
\gamma\left(G^{D_{\lda}^k (\mbf W)}_{\vph Y}, A^{D_{\lda}^k (\mbf W)}_{\Cal Y} 
\right). 
$$
Since the quiver representation $D_{\lda}^k (\mbf W)$ is the disconnected union of 
$Q_{\lda}(\mbf W)$ and the quiver representation that consists of the space $V$, we have  
\begin{equation}\label{indlem2}
\gamma \!\left(G^{L^{0}}_{\vph Y}, A^{L^{0}}_{\Cal Y}\right) =
c(k,\Cal Y) \gamma \!\left(G^{Q_{\lda}(\mbf W)}_{\vph Y}, 
A^{Q_{\lda}(\mbf W)}_{\Cal Y} \right).
\end{equation}
Now consider the case $\alpha\ne 0$. Then $\alpha=\alpha_{W_{s+1}}$ for some 
$W_{s+1}\in F_{\lda'}$. 
Let $O$ be the quiver representation $I(K(L^{\alpha},e),e)$ where
$e$ is the arrow from $V$ to $W_s$ in $L^{\alpha}$. Then $O$  may be depicted as
$$
\xymatrix{
V' \ar@{^{(}->}[r] & V \ar@{->>}[r] & W_{s+1} \ar@{^{(}->}[r] & 
W_s \ar@{^{(}->}[r] & \cdots \ar@{^{(}->}[r] & W_1 \ar@{^{(}->}[r] & W,
}
$$
where $V'=\ker\alpha$ and the map $V \thra W_{s+1}$ is induced by $\alpha$. 
By Lemma \ref{KI}, the $G^{L^{\alpha}}$-orbits on
$A^{L^{\alpha}}_{\Cal Y}$ are in a one-to-one correspondence with the 
$G^O$-orbits on $A^O_{\Cal Y}$. However, renaming $V$ as $Z$ and $V'$ as $V$, 
we may identify $O$ 
with $Q_{\lda'}^{k-\lda_{s+1}}(\mbf W)$ where 
$\lda'=(\lda_1,\ldots,\lda_s,\lda_{s+1})$. Thus, 
\begin{equation}\label{indlem3}
\gamma\left(G^{L^{\alpha}}_{\vph Y}, A^{L^{\alpha}}_{\Cal Y} \right)=
\gamma\left(G^{Q_{\lda'}^{k-\lda_{s+1}}(\mbf W)}_{\vph Y},
A^{Q_{\lda'}^{k-\lda_{s+1}}(\mbf W)}_{\Cal Y} \right).
\end{equation}
The result now follows from~\eqref{indlem1}, \eqref{indlem2} 
and~\eqref{indlem3}.
\end{proof}

The main result of this paper may be stated in a very general form as follows.
\begin{thm}\label{gensq} 
Let $\Cal Y$ be a class of linear operators over $\mF_q$. 
Let $k\in \mN$. Suppose that $G$ is a subgroup of $\GL(W)$ and that 
$A\subseteq \End(W)\cap \Cal Y$ is preserved by $G$. Then
$$\gamma(G^{(k)}_{\vph Y}, A^{(k)}_{\Cal Y}) =
\sum_{j=0}^k c(k-j,\Cal Y) \sum_{\lda\in\Cal P_j^m} \xi_{\lda} (G,A).
$$
\end{thm}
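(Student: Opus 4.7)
The plan is to prove, by strong induction on $k$, the following generalisation: for every $\lda \in \Cal P^m$, every $k \in \mZ_{\ge 0}$, and every $\mbf W \in F_{\lda}$,
\begin{equation*}
\gamma\bigl(G^{Q_{\lda}^k(\mbf W)}, A_{\Cal Y}^{Q_{\lda}^k(\mbf W)}\bigr) = \sum_{j=0}^{k} c(k-j,\Cal Y) \sum_{\substack{\mu \in \Cal P^m,\ \mu \supseteq \lda \\ |\mu|-|\lda|=j}} \sum_{\substack{\mbf W' \in F_{\mu} \\ \mbf W' \succeq \mbf W}} \gamma\bigl(G^{Q_{\mu}(\mbf W')}, A_{\Cal Y}^{Q_{\mu}(\mbf W')}\bigr), \tag{$\ast$}
\end{equation*}
where $\mu \supseteq \lda$ means that $\mu_i = \lda_i$ for each $i \le \ell(\lda)$ (so $\mu$ extends $\lda$ by appending further parts) and $\mbf W' \succeq \mbf W$ means that $\mbf W'$ is obtained from $\mbf W$ by appending further subspaces at the end. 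Specialising $(\ast)$ to $\lda = ()$ (so that $\mbf W$ is the empty flag, $Q_{()}^k = Q^k$, and $F_{()}=\{()\}$), the inner sum collapses to $\sum_{\mbf W' \in F_{\mu}} \gamma(G^{Q_{\mu}(\mbf W')}, A_{\Cal Y}^{Q_{\mu}(\mbf W')})$, which equals $\xi_{\mu}(G,A)$ by \eqref{xi}, so that Theorem~\ref{gensq} follows.

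For the base case $k = 0$, the space $V$ in $Q_{\lda}^0(\mbf W)$ has dimension zero, so the exact sequence $0 \to V \to Z \to W_s \to 0$ (interpreting $W_0 = W$ when $\lda = ()$) forces $Z \to W_s$ to be an isomorphism. Hence $Q_{\lda}^0(\mbf W)$ and $Q_{\lda}(\mbf W)$ have canonically identified endomorphism rings and automorphism groups, and both sides of $(\ast)$ reduce to $\gamma(G^{Q_{\lda}(\mbf W)}, A_{\Cal Y}^{Q_{\lda}(\mbf W)})$ (only the term $j = 0$, $\mu = \lda$, $\mbf W' = \mbf W$ survives on the right, and $c(0,\Cal Y) = 1$). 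For the inductive step $k \ge 1$, apply Lemma~\ref{indlem}. Its leading summand $c(k,\Cal Y)\,\gamma(G^{Q_{\lda}(\mbf W)}, A_{\Cal Y}^{Q_{\lda}(\mbf W)})$ matches exactly the $j = 0$ contribution to the right hand side of $(\ast)$. Each remaining summand $\gamma(G^{Q_{\lda'}^{k-\lda_{s+1}}(\mbf W_1)}, A_{\Cal Y}^{Q_{\lda'}^{k-\lda_{s+1}}(\mbf W_1)})$, indexed by $\lda_{s+1} \in [1,\min(\lda_s,k)]$ and $\mbf W_1 \in F_{\lda'}$ with $\mbf W_1 \succ \mbf W$ (where $\lda' = (\lda_1,\ldots,\lda_s,\lda_{s+1})$), is expanded by the inductive hypothesis, which is available since $k - \lda_{s+1} < k$. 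Re-indexing by $j := \lda_{s+1} + j'$, where $j'$ is the summation variable supplied by the hypothesis, produces a sum over triples $(j,\mu,\mbf W')$ with $j \ge 1$, $\mu \supseteq \lda$, $|\mu| - |\lda| = j$, and $\mbf W' \in F_{\mu}$ with $\mbf W' \succeq \mbf W$, which is precisely the $j \ge 1$ portion of the right hand side of $(\ast)$.

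The one point that requires care is that each such pair $(\mu,\mbf W')$ is produced exactly once. Given $(\mu,\mbf W')$, the auxiliary data $\lda_{s+1} = \mu_{s+1}$ and $\mbf W_1 = (W'_1,\ldots,W'_{s+1})$ (the truncation of $\mbf W'$ after $s+1$ subspaces) are uniquely determined. The compatibility property imposed on the representative sets $\{F_{(-)}\}$, namely that every initial segment of a flag in $F_{\mu}$ belongs to the corresponding representative set, guarantees that this $\mbf W_1$ does lie in $F_{\lda'}$, so the summand indexed by $(\lda_{s+1},\mbf W_1)$ really is present in the expansion provided by Lemma~\ref{indlem}. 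This bookkeeping is the only non-routine ingredient; once it is verified, combining the $j = 0$ and $j \ge 1$ contributions yields $(\ast)$, which closes the induction and proves the theorem.
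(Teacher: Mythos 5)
Your proof is correct, and it rests on exactly the same key ingredient as the paper's argument: Lemma~\ref{indlem}, together with the compatibility assumption that truncating a flag in $F_{\mu}$ lands in the corresponding $F_{\lda'}$. The organisation is different, however. The paper introduces aggregate quantities $a_s$ and $b_s$ indexed by the \emph{length} $s$ of a partition, applies Lemma~\ref{indlem} once to establish the telescoping relation $b_s = a_s + b_{s+1}$, observes that $a_s=b_s=0$ for $s>k$, and reads off the theorem as $b_0 = a_0+\cdots+a_k$. You instead prove the pointwise identity $(\ast)$ --- an explicit expansion of $\gamma\bigl(G^{Q_{\lda}^k(\mbf W)},A^{Q_{\lda}^k(\mbf W)}_{\Cal Y}\bigr)$ for each fixed $\lda$ and $\mbf W$ --- by strong induction on $k$, and specialise to $\lda=()$. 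The two are really dual repackagings of the same unfolding of Lemma~\ref{indlem}: the paper's telescope aggregates over all flags at a given length, while your induction peels off one level of the flag at a time. The telescoping version is more compact, since it sidesteps the re-indexing check you spell out in your last paragraph; your version has the compensating advantage of stating the more refined local formula $(\ast)$ explicitly, which the paper only produces implicitly. One further point worth noting: the printed Lemma~\ref{indlem} has $Q_{\lda}^{k-\lda_{s+1}}(\mbf W')$ on the right where $Q_{\lda'}^{k-\lda_{s+1}}(\mbf W')$ is intended (the proof of the lemma makes this clear), and you have correctly read it with the corrected subscript.
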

\begin{proof} Recall that 
$l(\lda)$ is the length (the number of parts) of a partition $\lambda$.
If $s$ is a nonnegative integer, let 
\begin{eqnarray*}
a_s & = & \sum_{j=0}^k \sum_{\substack{\lda\in\Cal P_j^m \\ l(\lda)=s }}
c(k-j,\Cal Y) \xi_{\lda} (G,A), \\
b_s & = & \sum_{j=0}^k \sum_{\substack{\lda\in\Cal P_{k-j}^m \\ l(\lda)=s}}
\sum_{\mbf W\in F_{\lda}} 
\gamma(G^{Q_{\lda}^j (\mbf W)}, A^{Q_{\lda}^j (\mbf W)}_{\Cal Y}).  
\end{eqnarray*}
Since $A\subseteq \Cal Y$, we have  
$A_{\Cal Y}^{Q_{\lda}(\mbf W)}=A^{Q_{\lda}(\mbf W)}_{\vph Y}$.
Thus, by~\eqref{xi}, 
\begin{equation}\label{xi2}
\xi_{\lda}(G,A)=\sum_{\mbf W\in F_{\lda}}
\gamma\left(G^{Q_{\lda}(\mbf W)}_{\vph Y}, A^{Q_{\lda}(\mbf W)}_{\Cal Y}\right).
\end{equation}

Observe that 
$$b_0=\gamma\left(G^{(k)}_{\vph Y}, A^{(k)}_{\Cal Y} \right).$$
Indeed, the only non-zero summand of $b_0$ corresponds to the case $j=k$, 
$\lda=()$. 
Applying Lemma \ref{indlem} and rearranging the sums, we infer that, 
for any $s$,
\begin{eqnarray}
b_s & = & \sum_{j=0}^k \sum_{\substack{\lda\in\Cal P_{k-j}^m \\ l(\lda)=s}}
\sum_{\mbf W\in F_{\lda}} 
\gamma\left(G^{Q_{\lda}^j (\mbf W)}_{\vph Y}, A^{Q_{\lda}^j (\mbf W)}_{\Cal Y}\right) \nonumber\\
& = & \sum_{j=0}^k \sum_{\substack{\lda\in\Cal P_{k-j}^m \\ l(\lda)=s}}
\sum_{\mbf W\in F_{\lda}} 
c(j,\Cal Y)\gamma\!\left(G^{Q_{\lda}(\mbf W)}_{\vph Y}, A^{Q_{\lda}(\mbf W)}_{\Cal Y}\right) \nonumber\\
& + & \sum_{i=0}^k \sum_{\substack{\lda'\in\Cal P_{k-i}^m \\ l(\lda')=s+1}}
\sum_{\mbf W'\in F_{\lda'}} 
\gamma\left(G^{Q_{\lda'}^i (\mbf W)}_{\vph Y}, A^{Q_{\lda'}^i (\mbf W)}_{\Cal Y}\right) \nonumber\\
& = & a_s+b_{s+1}. \label{gensq1}
\end{eqnarray} 
(The last equality follows from~\eqref{xi2}.)
Note that, if $s>k$, then $a_s=b_s=0$ because any partition $\lda$ with 
$l(\lda)=s$ satisfies $|\lda|\ge s>k$. Hence, by~\eqref{gensq1},
$$b_0=a_0+a_1+\cdots+a_k,$$ 
and the result follows. 
\end{proof} 

We now deduce Theorem~\ref{pargen}. Let $k,m \in \mN$, and let $W$ be a vector space
over $\mF_q$ of dimension $m$. Consider the quiver representation
$Q^{(k)}=(E_0,E_1,\mbf U,\bs\alpha)$ given by \eqref{Qk}: say, $E_0=\{a,b,c\}$,
$U_a=V$, $U_b=Z$, $U_c=W$ ($\dim V=k$). Put $G=\GL(W)$ and $A=\Cal Y\cap \End(W)$.  
Then, for any partition $\lda\in \Cal P^m$, the set $F_{\lda}$ consists of just one flag, so 
\begin{equation}\label{pargenpr1}
\xi_{\lda}(G,A)=\kappa_{\lda}^m (\Cal Y).
\end{equation}
Moreover, $\mbf X=(X_a,X_b,X_c)\mapsto X_b$ induces a one-to-one correspondence between
the $G^{(k)}$-orbits in $A^{(k)}$ and the $\scr P(Z;V)$-orbits in 
$\Cal Y\cap \End(Z;V)$. 
(Here we identify $V$ with its image under the injective 
map $V\ra Z$.) Hence, 
\begin{equation}\label{pargenpr2}
\gamma\left(G^{(k)}_{\vph Y},A^{(k)}_{\Cal Y}\right)=\kappa_{(k)}^{k+m} (\Cal Y).
\end{equation}
Theorem~\ref{pargen} follows immediately from~\eqref{pargenpr1}, \eqref{pargenpr2} and Theorem~\ref{gensq}.

Let $\lda$ be a partition. It can be represented as
$$\lambda=(\underbrace{s_1,s_1,\ldots,s_1}_{u_1},
\underbrace{s_2,\ldots,s_2}_{u_2}, \ldots, \underbrace {s_l,\ldots,s_l}_{u_l})
$$
where $s_1>s_2>\ldots>s_l$. Let $\bar{\lambda}$ be the set $\{s_1,\ldots,s_l\}$.
If $S\subset \mN$ is a finite set and $k\in \mN$, let $r(k,S)$ be the number of
partitions $\lda$ such that $\bar{\lda}=S$. That is, $r(k,S)$ is the number of 
partitions $\lda$ such that $\lda_i\in S$ for all $i$ and, for each $s\in S$, there
exists $i$ such that $\lda_i=s$.

If $S=\{s_1,\ldots,s_l\}$ is a set and $s_1>\cdots>s_l$, 
we shall identify the set $S$ with the partition
$(s_1,s_2,\ldots,s_l)$. 
(So the notation $\xi_S (G,A)$ makes sense, for example.) 

Let $W$, $G\le \GL(W)$ and $A\subseteq \End(W)\cap \Cal Y$ be as above, with $m=\dim W$. 
It is clear that, if $\lambda\in\Cal P^m$, then
\begin{equation}\label{coll1}
\xi_{\lda}(G,A)=\xi_{\bar\lda} (G,A).
\end{equation}
(Indeed, duplicating subspaces  
in a flag $\mbf W$ does not add anything new to the structure.)
Moreover, if a $S=S'\sqcup \{m\}$ is a subset of $[1,m]$ containing $m$, then
\begin{equation}\label{coll2}
\xi_S (G,A)=\xi_{S'} (G,A).
\end{equation} 
These identities allow us to simplify the expression in Theorem~\ref{gensq}.
\begin{cor}\label{sqcor1} Under the hypotheses of Theorem~\ref{gensq},
\begin{equation*}
\gamma(G^{(k)}, A^{(k)}_{\Cal Y})  = 
\sum_{j=0}^k c(k-j,\Cal Y)
\sum_{S\subseteq [1,m-1]} (r(j,S)+r(j,S\cup \{m\})) \xi_S (G,A). 
\end{equation*}
\end{cor}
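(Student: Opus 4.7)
The plan is to start from Theorem~\ref{gensq} and use the collapsing identities~\eqref{coll1} and~\eqref{coll2} to reorganise the inner sum over partitions into a sum over subsets of $[1,m-1]$.

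First, I would rewrite the inner sum $\sum_{\lda\in \Cal P_j^m}\xi_{\lda}(G,A)$ by grouping partitions according to their underlying set $\bar\lda$. By~\eqref{coll1}, $\xi_\lda(G,A)$ depends only on $\bar\lda$, and by definition of $r(j,S)$ the number of $\lda\in\Cal P_j^m$ with $\bar\lda=S$ is exactly $r(j,S)$ (where $S$ ranges over subsets of $[1,m]$, noting that $r(j,S)=0$ unless $S\subseteq[1,\min(j,m)]$). Hence
$$
\sum_{\lda\in\Cal P_j^m}\xi_\lda(G,A)=\sum_{S\subseteq[1,m]} r(j,S)\,\xi_S(G,A).
$$

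Next I would split this sum according to whether $m\in S$. The subsets $S\subseteq[1,m]$ with $m\notin S$ are exactly the subsets $S\subseteq[1,m-1]$, contributing $r(j,S)\,\xi_S(G,A)$. For each $S'\subseteq[1,m-1]$, the subset $S=S'\cup\{m\}$ contains $m$, and by~\eqref{coll2} we have $\xi_{S'\cup\{m\}}(G,A)=\xi_{S'}(G,A)$; this contributes $r(j,S'\cup\{m\})\,\xi_{S'}(G,A)$. Adding the two contributions and relabelling $S'$ as $S$ gives
$$
\sum_{\lda\in\Cal P_j^m}\xi_\lda(G,A)=\sum_{S\subseteq[1,m-1]}\bigl(r(j,S)+r(j,S\cup\{m\})\bigr)\xi_S(G,A).
$$

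Substituting this into the formula of Theorem~\ref{gensq} yields the claim. There is no real obstacle here: the argument is just bookkeeping, and the only point to check is that~\eqref{coll2} applies even when $S'=\emptyset$ (so that $S=\{m\}$ represents the trivial flag $W\supseteq W$, which carries no extra information over the empty flag, giving $\xi_{\{m\}}(G,A)=\xi_\emptyset(G,A)$).
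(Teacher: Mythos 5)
Your proof is correct and is exactly the argument the paper intends (the paper omits the details, simply remarking that identities~\eqref{coll1} and~\eqref{coll2} "allow us to simplify" the formula of Theorem~\ref{gensq}). Grouping the partitions in $\Cal P_j^m$ by the underlying set $\bar\lda$, counting them via $r(j,S)$, and then folding the $m\in S$ subsets onto $[1,m-1]$ via~\eqref{coll2} is precisely the intended reduction, and your side remark about $S'=\emptyset$ correctly handles the only boundary case.
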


\begin{remark} When calculating the last sum, we only need to consider those
 $S\subseteq [1,m-1]$ for which $\sum_{s\in S} s\le k$: 
if $\sum_{s\in S} s>k$, then 
$r(j,S)=r(j,S\cup \{m\})=0$ for all $j\in [0,k]$. 
\end{remark}

Similarly, Theorem \ref{pargen} implies the following.
\begin{cor}\label{sqcor2} Let $k,m\in \mN$. Then
\begin{equation*}
 \kappa_{(k)}^{k+m} (\Cal Y)  = 
\sum_{j=0}^k c(k-j,\Cal Y)
 \sum_{S\subseteq [1,m-1]} (r(k,S)+r(k,S\cup \{m\})) \kappa_S^{m} (\Cal Y). 
\end{equation*}
\end{cor}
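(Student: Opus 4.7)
The plan is to derive Corollary~\ref{sqcor2} from Theorem~\ref{pargen} by exactly the same repackaging that turns Theorem~\ref{gensq} into Corollary~\ref{sqcor1}. Starting from
$$\kappa_{(k)}^{k+m}(\Cal Y) = \sum_{j=0}^{k} c(k-j,\Cal Y) \sum_{\lda \in \Cal P_j^m} \kappa_{\lda}^m(\Cal Y),$$
I would first establish the $\kappa$-analogues of \eqref{coll1} and \eqref{coll2}: namely, $\kappa_{\lda}^m(\Cal Y) = \kappa_{\bar{\lda}}^m(\Cal Y)$ for every $\lda \in \Cal P^m$, and $\kappa_{S \cup \{m\}}^m(\Cal Y) = \kappa_S^m(\Cal Y)$ for every $S \subseteq [1,m-1]$.

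Both identities are direct from the definition $\kappa_{\lda}^m(\Cal Y) = \gamma(P^{\delta(m,\lda)}(q), M^{\delta(m,\lda)}(q) \cap \Cal Y)$. Repeated consecutive parts in $\lda$ produce zero entries in the tuple $\delta(m,\lda)$; by the conventions in Section~\ref{parintro}, a $0$ entry in $\mbf l$ contributes trivially to both $P^{\mbf l}(q)$ and $M^{\mbf l}(q)$, so deleting it leaves the orbit count unchanged. This gives the first identity. For the second, $m \in S$ corresponds to the top subspace of the associated flag being all of $\mF_q^m$, equivalently, to the leftmost entry $m - \lda_1$ of $\delta(m,\lda)$ being $0$; the same convention yields $\kappa_{S \cup \{m\}}^m(\Cal Y) = \kappa_S^m(\Cal Y)$.

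Given these identities I would group the inner sum by the distinct-parts set: partitions $\lda \in \Cal P_j^m$ with $\bar{\lda} = S$ all contribute the same value $\kappa_S^m(\Cal Y)$, and there are exactly $r(j,S)$ of them. Thus
$$\sum_{\lda \in \Cal P_j^m} \kappa_{\lda}^m(\Cal Y) = \sum_{S \subseteq [1,m]} r(j,S)\, \kappa_S^m(\Cal Y).$$
Splitting the subsets $S \subseteq [1,m]$ into those contained in $[1,m-1]$ and those of the form $S' \cup \{m\}$ with $S' \subseteq [1,m-1]$, and applying the second identity to rewrite $\kappa_{S' \cup \{m\}}^m(\Cal Y) = \kappa_{S'}^m(\Cal Y)$, collects terms into the coefficient $r(j,S) + r(j, S \cup \{m\})$ in front of each $\kappa_S^m(\Cal Y)$, $S \subseteq [1,m-1]$. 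Substituting back yields the claimed identity.

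There is no real obstacle here: the whole proof is a mechanical reorganisation of Theorem~\ref{pargen} via two combinatorial conventions on the tuple $\delta(m,\lda)$. The only mildly delicate point is bookkeeping the boundary case $\lda_1 = m$ (handled by the $m \in S$ identity) so that no partition is double-counted or lost when restricting the outer index set from $S \subseteq [1,m]$ to $S \subseteq [1,m-1]$.
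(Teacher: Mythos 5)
Your argument is correct and is essentially the paper's (implicit) proof: the paper derives Corollary~\ref{sqcor2} from Theorem~\ref{pargen} by the same collapsing of the inner sum over partitions into a sum over distinct-parts sets that turns Theorem~\ref{gensq} into Corollary~\ref{sqcor1}, and your two $\kappa$-identities are exactly the $\kappa$-analogues of \eqref{coll1} and \eqref{coll2}, justified by the same zero-block convention. One harmless slip: $m-\lda_1$ is the \emph{last} (rightmost) coordinate of $\delta(m,\lda)$, not the leftmost.

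There is, however, a discrepancy you glossed over. Your grouping gives
\[
\sum_{\lda\in\Cal P_j^m}\kappa_{\lda}^m(\Cal Y)
=\sum_{S\subseteq[1,m-1]}\bigl(r(j,S)+r(j,S\cup\{m\})\bigr)\kappa_S^m(\Cal Y),
\]
so after substituting into Theorem~\ref{pargen} the inner coefficient is $r(j,S)+r(j,S\cup\{m\})$, indexed by the summation variable $j$. The printed statement of Corollary~\ref{sqcor2} instead has $r(k,S)+r(k,S\cup\{m\})$, with the fixed outer $k$. These are not the same. The correct index is $j$, as confirmed by comparison with Corollary~\ref{sqcor1} (which has $r(j,S)$) and by the way \eqref{usephi} is derived from it in Section~\ref{inv}; the $k$ in the printed Corollary~\ref{sqcor2} is a typo. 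So your computation proves the intended (corrected) statement, but you should not assert that it "yields the claimed identity" verbatim — you should note that what you obtain is the $r(j,\cdot)$ version and that the statement as printed needs this correction.
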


\section{Inverting the formula}\label{inv}

From now on, we shall assume that $\Cal Y$ is  
the class $\Cal N$ of all nilpotent endomorphisms, 
so $c(j,\Cal Y)=p(j)$ for all $j$. Let $W$ be an $m$-dimensional vector 
space over $\mF_q$, where $m\in \mN$ is fixed throughout the section. Let $G$ be a subgroup of $\GL(W)$ 
preserving a subset $A$ of $N(W)$ (as in Section~\ref{indarg}). If
$k\in \mZ_{\ge 0}$, define
\begin{eqnarray*}
\phi_k (G,A) & = & \sum_{S\subseteq [1,m]} r(k,S) \xi_S (G,A) \quad\text{and } \\
\psi_k (G,A) & = & \phi_k (G,A)-\phi_{k-m}(G,A)
\end{eqnarray*}
where, by convention, $\phi_k (G,A)=0$ if $k<0$.
If $S\subseteq [1,m-1]$, then 
$$
r(k,S \cup \{m\})=r(k-m,S) + r(k-m,S\cup \{m\})
$$ 
for all $k\in \mN$.
Also, by~\eqref{coll2}, $\xi_S (G,A)= \xi_{S\cup \{m\}} (G,A)$. Hence,
\begin{eqnarray*}
\phi_{k-m}(G,A)  & = & 
\sum_{S\subseteq [1,m-1]} r(k-m,S) \xi_S (G,A) \\
& + & \sum_{S\subseteq [1,m-1]} r(k-m,S \cup \{m\})\xi_{S \cup \{m\}} (G,A) \\
& = & \sum_{S\subseteq [1,m-1]} r(k, S\cup \{m\}) \xi_{S\cup\{m\}}(G,A).
\end{eqnarray*}
It follows that 
\begin{equation}\label{psi}
\psi_k (G,A) = \sum_{S\subseteq [1,m-1]} r(k,S) \xi_S (G,A).
\end{equation}

We now use Corollary~\ref{sqcor1} show that the numbers 
$\phi_k(G,A)$ and
$\psi_k (G,A)$ may be expressed (independently of $G$ and $A$) in terms of $\gamma(G^{(r)}_{\vph N},A^{(r)}_{\Cal N})$, $r\in [0,k]$.
\begin{lem}\label{expphi} 
Let $k\in \mN$. There exist integers $a_0,a_1,\ldots,a_k$  
that depend only on $k$ and $m$ (but not on $G$ or $A$) such that 
$$
\phi_k (G,A) = \sum_{j=0}^k a_j \gamma(G^{(j)}_{\vph N},A^{(j)}_{\Cal N}).
$$
\end{lem}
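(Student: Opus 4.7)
The plan is to show that the families $\{\phi_k(G,A)\}_{k\geq 0}$ and $\{\gamma(G^{(k)}, A^{(k)}_{\Cal N})\}_{k\geq 0}$ are related by a lower-triangular $\mathbb{Z}$-linear convolution whose kernel depends only on $k$, and then invert it. First I would rewrite the definition of $\phi_k(G,A)$ in a form compatible with Corollary~\ref{sqcor1}. Splitting the sum over $S\subseteq [1,m]$ according to whether $m\in S$, and applying \eqref{coll2} to the pieces with $m\in S$, one gets
$$
\phi_k(G, A) = \sum_{S \subseteq [1, m-1]} \bigl(r(k, S) + r(k, S \cup \{m\})\bigr)\, \xi_S(G, A).
$$

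Inserting this into Corollary~\ref{sqcor1} with $\Cal Y=\Cal N$ (so that $c(j,\Cal N)=p(j)$), the double sum on the right-hand side collapses to
$$
\gamma(G^{(k)}, A^{(k)}_{\Cal N}) = \sum_{j=0}^{k} p(k-j)\, \phi_j(G, A).
$$
This is a lower-triangular system in the $\phi_j$'s with unit diagonal, since $p(0)=1$, so it is invertible over $\mathbb{Z}$, and the inverse matrix depends only on the values $p(0),p(1),\ldots,p(k)$ (hence only on $k$, not on $m$, $G$ or $A$).

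I would then carry out the inversion either by induction on $k$ — solving $\phi_k(G,A) = \gamma(G^{(k)}, A^{(k)}_{\Cal N}) - \sum_{j=0}^{k-1} p(k-j)\phi_j(G,A)$ recursively — or, equivalently and more explicitly, at the level of generating series: the multiplicative inverse of $\sum_{n\geq 0} p(n) x^n = \prod_{n\geq 1}(1-x^n)^{-1}$ is $\prod_{n\geq 1}(1-x^n) = \sum_{n\geq 0}\mu(n) x^n$, whose coefficients $\mu(n)$ are integers (in fact in $\{-1,0,1\}$, by Euler's pentagonal number theorem). Either way one obtains
$$
\phi_k(G,A) = \sum_{j=0}^{k} a_j\, \gamma(G^{(j)}, A^{(j)}_{\Cal N}), \qquad a_j = \mu(k-j) \in \mathbb{Z},
$$
as required.

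There is no serious obstacle: the only point that needs attention is the first bookkeeping step, namely verifying that after using \eqref{coll2} the coefficient $r(j,S) + r(j,S\cup\{m\})$ appearing in Corollary~\ref{sqcor1} coincides with the coefficient of $\xi_S(G,A)$ in $\phi_j(G,A)$. Once that identification is made, the statement is simply the formal inversion of a unitriangular integer convolution.
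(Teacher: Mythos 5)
Your proposal is correct and follows essentially the same route as the paper: both derive the convolution identity $\gamma(G^{(k)},A^{(k)}_{\Cal N}) = \sum_{j=0}^k p(k-j)\,\phi_j(G,A)$ from Corollary~\ref{sqcor1} and invert the resulting unitriangular system by induction on $k$. The explicit identification $a_j=\mu(k-j)$ via the Euler product is a nice extra, but the paper stops at the inductive inversion, which already suffices.
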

 
\begin{proof} We prove the lemma by induction on $k$. 
Observe that 
$$
\phi_0 (G,A) = \xi_{\varnothing}(G,A) = \gamma(G^{(0)}_{\vph N}, A^{(0)}_{\Cal N})
= \gamma(G,A).
$$
By Corollary~\ref{sqcor1},
\begin{eqnarray}
\gamma(G^{(k)}_{\vph N},A^{(k)}_{\Cal N}) & = & \sum_{j=0}^k p(k-j) \phi_j (G,A), \qquad 
\text{so } 
\label{usephi}\\
\phi_k (G,A) & = & \gamma(G^{(k)}_{\vph N},A^{(k)}_{\Cal N})-p(k)\gamma(G,A)- 
\sum_{j=1}^{k-1} p(k-j) \phi_j (G,A).\nonumber
\end{eqnarray}
The result for $\phi_k (G,A)$ now follows by the inductive hypothesis. 
\end{proof}

\begin{cor}\label{exppsi}
Let $k\in \mN$. There exist integers $a_0,a_1,\ldots,a_k$  
that depend only on $k$ and $m$ (but not on $G$ or $A$) such that 
$$
\psi_k (G,A) = \sum_{j=0}^k a_j \gamma(G^{(j)}_{\vph N},A^{(j)}_{\Cal N}).
$$
Also, there exist integers $a'_0,a'_1,\ldots,a'_k$ depending only on $k$ and $m$
such that
$$
\gamma(G^{(k)}_{\vph N},A^{(k)}_{\Cal N}) = \sum_{j=0}^k a'_j \psi_j (G,A).
$$
\end{cor}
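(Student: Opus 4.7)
The first assertion follows immediately from Lemma~\ref{expphi}. By definition,
$$
\psi_k(G,A) = \phi_k(G,A) - \phi_{k-m}(G,A),
$$
where we interpret $\phi_{k-m}(G,A)$ as $0$ when $k < m$. Applying Lemma~\ref{expphi} to each of $\phi_k(G,A)$ and $\phi_{k-m}(G,A)$ expresses each as an integer linear combination of the numbers $\gamma(G^{(j)}_{\vph N},A^{(j)}_{\Cal N})$ with coefficients depending only on $k$ and $m$. Subtracting the two expressions yields the required presentation of $\psi_k(G,A)$.

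For the second assertion, my plan is to start from the identity
$$
\gamma(G^{(k)}_{\vph N},A^{(k)}_{\Cal N}) = \sum_{j=0}^{k} p(k-j)\,\phi_j(G,A),
$$
which is precisely \eqref{usephi} established in the proof of Lemma~\ref{expphi}, and then express each $\phi_j(G,A)$ as an integer combination of $\psi_0(G,A),\ldots,\psi_j(G,A)$. The defining relation $\phi_j(G,A) = \psi_j(G,A) + \phi_{j-m}(G,A)$ can be iterated (and terminates because $\phi_i(G,A)=0$ for $i<0$), yielding
$$
\phi_j(G,A) = \sum_{\substack{i\ge 0 \\ j-im \ge 0}} \psi_{j-im}(G,A),
$$
a finite sum with all coefficients equal to $1$. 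Substituting this into the preceding display and interchanging the order of summation produces
$$
\gamma(G^{(k)}_{\vph N},A^{(k)}_{\Cal N}) = \sum_{l=0}^{k} \Bigl(\,\sum_{\substack{i\ge 0\\ l+im\le k}} p(k-l-im)\Bigr)\,\psi_l(G,A),
$$
so the coefficients $a'_l := \sum_{i\ge 0,\,l+im\le k} p(k-l-im)$ are nonnegative integers that depend only on $k$ and $m$, as required.

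Viewed more abstractly, the system of equations $\psi_j = \phi_j - \phi_{j-m}$ is triangular in $(\phi_0,\phi_1,\ldots)$ with all diagonal coefficients equal to $1$, so it can be inverted over $\mZ$; combined with \eqref{usephi} (also over $\mZ$), this inverts the passage from $\gamma(G^{(j)}_{\vph N},A^{(j)}_{\Cal N})$ to $\psi_j(G,A)$ without introducing denominators. I do not anticipate any genuine obstacle: the argument is purely formal once Lemma~\ref{expphi} and \eqref{usephi} are in hand. The only point worth checking carefully is that all coefficients that appear remain \emph{integers} rather than merely rationals, which is what the triangular-with-unit-diagonal structure guarantees.
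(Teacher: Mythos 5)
Your proof is correct and follows the paper's own argument exactly: the first assertion by subtracting the two applications of Lemma~\ref{expphi}, and the second by combining \eqref{usephi} with the telescoping identity $\phi_j(G,A)=\sum_{i\ge 0}\psi_{j-im}(G,A)$. The explicit formula you give for the coefficients $a'_l$ is a nice small addition but does not change the approach.
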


\begin{proof} The first statement follows from Lemma~\ref{expphi}. 
The second statement follows from~\eqref{usephi} and the 
identity 
$$
\phi_j (G,A)= \sum_{i=0}^{\infty} \psi_{j-im}(G,A),
$$
which is a consequence of the definition of $\psi_k (G,A)$. 
\end{proof} 

Let $n=m(m-1)/2$.
It is proved in the Appendix that there exist (explicitly defined) 
integers $c_1,\ldots,c_n$, depending only on $m$, such that 
\begin{equation}\label{cn}
r(k,S)=-\sum_{j=k-n}^{k-1} c_{k-j} r(j,S).
\end{equation}
for all $k>n$ and all $S\subseteq [1,m-1]$.

\begin{prop}\label{expgen} Let $n=m(m-1)/2$. Let $k>n$. Then there exist
integers $a_{k0}, a_{k1},\ldots,a_{kn}$ depending only on $k$ and $m$ (but
 not on $G$ or $A$) such that
$$
\gamma(G^{(k)}_{\vph N},A^{(k)}_{\Cal N}) =
\sum_{j=0}^n a_{kj} \gamma(G^{(j)}_{\vph N},A^{(j)}_{\Cal N}).
$$
\end{prop}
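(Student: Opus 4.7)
The plan is to transfer the recurrence~\eqref{cn} from the coefficients $r(k,S)$ to the invariants $\psi_k(G,A)$, and then convert back to $\gamma(G^{(k)},A^{(k)}_{\Cal N})$ via Corollary~\ref{exppsi}. This works because the identity~\eqref{psi} expresses $\psi_k(G,A)$ as a linear combination of the quantities $\xi_S(G,A)$, $S\subseteq [1,m-1]$, with coefficients $r(k,S)$ that themselves depend only on $k$ and $m$.

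First I would observe that, since the quantities $\xi_S(G,A)$ are independent of $k$, multiplying~\eqref{cn} by $\xi_S(G,A)$ and summing over $S\subseteq [1,m-1]$ yields, for each $k>n$, the recurrence
$$
\psi_k(G,A)=-\sum_{j=k-n}^{k-1} c_{k-j}\, \psi_j(G,A),
$$
with integer coefficients $c_1,\ldots,c_n$ depending only on $m$. Iterating this recurrence downward, one obtains, for every $k>n$, integers $e_{k0},\ldots,e_{kn}$, depending only on $k$ and $m$, such that
$$
\psi_k(G,A)=\sum_{i=0}^{n} e_{ki}\,\psi_i(G,A).
$$

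Next I would invoke Corollary~\ref{exppsi}: there exist integers $a'_0,\ldots,a'_k$ depending only on $k$ and $m$ such that
$$
\gamma(G^{(k)}_{\vph N},A^{(k)}_{\Cal N})=\sum_{j=0}^{k} a'_j\, \psi_j(G,A).
$$
Splitting this sum at $n$ and substituting the previous display for each $j>n$, I get integers $b_0,\ldots,b_n$ (depending only on $k$ and $m$) with
$$
\gamma(G^{(k)}_{\vph N},A^{(k)}_{\Cal N})=\sum_{i=0}^{n} b_i\, \psi_i(G,A).
$$
Finally, applying the first statement of Corollary~\ref{exppsi} to each $\psi_i(G,A)$, $i\le n$, expresses each $\psi_i(G,A)$ as an integer linear combination of $\gamma(G^{(j)}_{\vph N},A^{(j)}_{\Cal N})$ for $j\le i\le n$. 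Substituting and collecting yields integers $a_{k0},\ldots,a_{kn}$ depending only on $k$ and $m$ such that
$$
\gamma(G^{(k)}_{\vph N},A^{(k)}_{\Cal N})=\sum_{j=0}^{n} a_{kj}\,\gamma(G^{(j)}_{\vph N},A^{(j)}_{\Cal N}),
$$
as required.

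The substantive content of the argument is the recurrence~\eqref{cn}, whose verification is deferred to the Appendix; once it is available, the steps above are formal manipulations. The only place where care is needed is to ensure that the coefficients remain integers at each stage, which is immediate from the integrality of $c_1,\ldots,c_n$ in~\eqref{cn} and of the coefficients produced by Corollary~\ref{exppsi}.
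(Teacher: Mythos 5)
Your proof follows the same route as the paper's: derive the recurrence for $\psi_k(G,A)$ from~\eqref{psi} and~\eqref{cn}, iterate it down to express $\psi_k(G,A)$ as an integer combination of $\psi_0(G,A),\ldots,\psi_n(G,A)$, and then pass back and forth between $\psi_j$ and $\gamma(G^{(j)}_{\vph N},A^{(j)}_{\Cal N})$ via Corollary~\ref{exppsi}. The paper leaves the final conversion implicit ("By Corollary~\ref{exppsi}, the result follows"); you have simply spelled out those bookkeeping steps, which is fine.
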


\begin{proof}  
By~\eqref{psi} and~\eqref{cn}, for all $k>n$,
$$
\psi_k (G,A)= - \sum_{j=k-n}^{k-1} c_{k-j} \psi_j (G,A).
$$
An induction on $k$ shows that, for $k>n$, there exist
integers $a'_{k1},\ldots,a'_{kn}$ such that 
$$
\psi_k (G,A) = \sum_{j=1}^n a'_{kj} \psi_j (G,A).
$$ 
By Corollary~\ref{exppsi}, the result follows. 
\end{proof}

\begin{remark} The matrix formed by the numbers $r(k,S)$ as $k$ runs 
through $\mN$ and $S$ runs through non-empty subsets of $[1,m-1]$ is of rank
$m(m-1)/2=n$, as shown in the Appendix. Thus, Proposition~\ref{expgen}
is the best result that may be obtained via the method described.
\end{remark}

We now show that Theorem~\ref{rec} is a particular case of this result.
Let $\mbf l=(l_1,\ldots,l_s)$ be a tuple of nonnegative integers with
$l_1+\cdots+l_s=m$. As before, let $W=\mF_q^m$. 
Put $G=P^{\mbf l}(q)\le \GL(W)$ and $A=N^{\mbf l}(q)\subseteq \End(W)$. Then 
$A$ is preserved by $G$. The quiver representation $Q^k$ may be depicted as 
$$
\xymatrix{  
U_a \ar@{^{(}->}[r] & U_b \ar@{->>}[r] &  W=U_c
}
$$
where the sequence is exact.
Then the map $\End(Q^k)\ra \End(U_b)$, $\mbf X\mapsto X_b$ establishes an
isomorphism between the $G^{(k)}$-action on $A^{(k)}_{\Cal N}$ and the 
$P^{k,\mbf l}(q)$-action on $N^{k,\mbf l}(q)$. (Here we identify $P^{k,\mbf l}(q)$
 with the subgroup of $\GL(U_b)$ consisting of those maps $g$ that preserve
the image of $U_a$ and act as an element of $G$ on $W$.) Thus,
\begin{equation}\label{rhoquiver}
\rho_{k,\mbf l}(q) = \gamma(G^{(k)}_{\vph N},A^{(k)}_{\Cal N}).
\end{equation}
Theorem~\ref{rec} now follows from Proposition~\ref{expgen}.

With $n=m(m-1)/2$, as before, 
it is shown in the Appendix that there exist (explicitly defined) 
integers $d_1,\ldots,d_n$ such that 
\begin{equation}\label{djeq}
\sum_{j=1}^n d_j r(j,S) =  
\begin{cases}
1 & \text{if } S=[1,m-1], \\ 
0 & \text{if } S \subsetneq [1,m-1].
\end{cases}
\end{equation}

\begin{prop}\label{maxredgen} Let $m\in \mN$, and let $W=\mF_q^m$.
Let $n=m(m-1)/2$. Then there exist integers
$a_{0},a_1,\ldots,a_n$ depending only on $m$ such that, for any 
$G\le \GL(W)$ and $A\subseteq \N(W)$ with $G$ preserving $A$, 
$$   
\xi_{[1,m-1]}(G,A)=\sum_{j=0}^n a_j \gamma(G^{(j)}_{\vph N},A^{(j)}_{\Cal N}).
$$
\end{prop}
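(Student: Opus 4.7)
The plan is to combine the two identities \eqref{psi} and \eqref{djeq} to isolate $\xi_{[1,m-1]}(G,A)$ as an explicit integer linear combination of the numbers $\psi_k(G,A)$ for $k\in[1,n]$, and then to invoke Corollary~\ref{exppsi} to convert each $\psi_k(G,A)$ into an integer combination of the $\gamma(G^{(j)}_{\vph N},A^{(j)}_{\Cal N})$ with $j\le k\le n$.

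More precisely, first I would form the combination
$$
\sum_{k=1}^{n} d_k \psi_k(G,A) \;=\; \sum_{k=1}^{n} d_k \sum_{S\subseteq[1,m-1]} r(k,S)\,\xi_S(G,A)
\;=\; \sum_{S\subseteq[1,m-1]} \Bigl(\sum_{k=1}^{n} d_k r(k,S)\Bigr)\,\xi_S(G,A),
$$
using \eqref{psi} and swapping the order of summation. By \eqref{djeq}, the inner coefficient is $1$ when $S=[1,m-1]$ and $0$ otherwise, so the right-hand side collapses to $\xi_{[1,m-1]}(G,A)$. Thus
$$
\xi_{[1,m-1]}(G,A) \;=\; \sum_{k=1}^{n} d_k\,\psi_k(G,A),
$$
with coefficients depending only on $m$.

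Next, by Corollary~\ref{exppsi}, for each $k\in[1,n]$ there exist integers $a_{k,0},\ldots,a_{k,k}$ (depending only on $k$ and $m$) such that
$$
\psi_k(G,A) \;=\; \sum_{j=0}^{k} a_{k,j}\,\gamma(G^{(j)}_{\vph N},A^{(j)}_{\Cal N}).
$$
Substituting these into the displayed expression for $\xi_{[1,m-1]}(G,A)$, regrouping by $j$, and setting
$$
a_j \;=\; \sum_{k=\max(j,1)}^{n} d_k\,a_{k,j} \qquad (0\le j\le n),
$$
yields the required formula with integer coefficients $a_j$ depending only on $m$. Since every step produces integer combinations and the coefficients $d_k$ and $a_{k,j}$ are independent of $G$ and $A$, so are the final $a_j$.

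There is no real obstacle here: the proposition is essentially a bookkeeping consequence of combining \eqref{psi}, \eqref{djeq}, and Corollary~\ref{exppsi}. The substantive content lies outside this proof, namely in the Appendix where the existence of the integers $d_1,\ldots,d_n$ satisfying \eqref{djeq} is established (which in turn rests on the rank computation for the matrix $(r(k,S))$ that underlies the remark following Proposition~\ref{expgen}).
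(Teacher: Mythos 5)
Your argument is exactly the paper's proof, just spelled out in more detail: combine \eqref{psi} and \eqref{djeq} to get $\xi_{[1,m-1]}(G,A)=\sum_{j=1}^n d_j\psi_j(G,A)$, then apply Corollary~\ref{exppsi}. The bookkeeping in the final regrouping step is correct and the conclusion follows.
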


\begin{proof} By~\eqref{djeq} and~\eqref{psi},
$$
\sum_{j=1}^n d_j \psi_j (G,A) = \xi_{[1,m-1]}(G,A).
$$
The result now follows from Corollary~\ref{exppsi}.
\end{proof}

In order to deduce Theorem~\ref{impl}, we put $G=\GL_m (q)$ and $A=\N_m (q)$. 
Then, by~\eqref{rhoquiver}, $\rho_{(j,m)}(q)=\gamma(G^{(j)},A^{(j)})$. 
Also, $\xi_{[1,m-1]}(G,A)=\rho_{(1^m)}(q)$. Thus, Theorem~\ref{impl} follows from
Proposition~\ref{maxredgen}.

\section{Groups associated to preordered sets}\label{presets}

Let $C$ be a finite set. A binary relation $\preq$ on $C$ is a 
\emph{preorder} if 
\begin{enumerate}[(i)] 
\item $x\preq x$ for all $x\in C$; and 
\item if $x\preq y$ and $y\preq z$, then $x\preq z$, for all $x,y,z\in C$.
\end{enumerate}
A \emph{preordered set} is a finite set together with a preorder on it. 
Two elements $x,y\in C$ are said to be \emph{comparable} if either $x\preq y$
 or $y\preq x$. 
If $(C,\preq)$ is a preordered set, one can define an equivalence relation on
$C$ as follows: $x$ is equivalent to $y$ if and only if $x\preq y$ and 
$y\preq x$. The \emph{clots} of $C$ are, by definition, 
the equivalence classes with respect to
 this relation. 
 Say that a clot $D\subseteq C$ is \emph{minimal} if 
there is no $x\in C\setminus D$ such that $x\preq y$ for some $y\in D$. 
A \emph{partially ordered set} is a preordered set each of 
whose clots contains only one element.
The \emph{dual} of a preordered set $(C,\preq)$ is $(C,\preq')$ where
$x\preq' y$ if and only if $y\preq x$. We denote the dual of a preordered set
$C$ by $C^*$. If $(C_1,\preq_1)$ and $(C_2,\preq_2)$ are two preordered sets, their 
\emph{disjoint union} is the set $C_1 \sqcup C_2$ with the preorder $\preq$
defined as follows:
$x\preq y$ if and only if $x,y\in C_i$ and $x\preq_i y$ for some $i\in\{1,2\}$.
Where appropriate, $C_1 \sqcup C_2$ will denote the corresponding preordered set
rather than just a set.

We shall define preordered sets by diagrams as follows. Clots 
will correspond to nodes, with the number at each node equal to the number
 of elements in the corresponding clot; $x\preq y$ if and only if one can get 
from the node of $x$ to the node of $y$ by going along arrows. For 
example, 
\begin{equation}\label{expre}
\begin{array}{c}
\begin{picture}(40,12)
\multiput(0,10)(20,0){3}{\node}
\put(0,0){\node}
\put(0,10){\usebox{\rvec}}
\put(20,10){\usebox{\rvec}}
\put(0,0){\usebox{\ruvec}}
\put(1,11){$1$}
\put(21,11){$2$}
\put(41,11){$1$}
\put(0,2){$1$}
\end{picture}
\end{array}
\end{equation}
is (isomorphic to) the preordered set $\{1,2,3,4,5\}$ where $\preq$ is defined as follows:
$1\preq 3\preq 4 \preq 3 \preq 5$ and $2\preq 3$.

Let $q$ be a prime power. Suppose $(C,\preq)$ is a (finite) preordered set.
Let $M^C(q)=M^{(C,\preq)}(q)$ be 
the set of all $C\times C$ matrices $X=(x_{ij})$ 
over $\mF_q$ such that $x_{ij}=0$ unless $i\preq j$, for all $i,j\in C$. 
Let $P^C (q)$ be the group of all invertible matrices in $M^C (q)$, and let
$N^C (q)$ be the set of all nilpotent matrices in $M^C (q)$. Let
$$
\rho_C (q) = \gamma(P^C (q), N^C (q)).
$$ 
For example, if $C$ is the preordered set given by~\eqref{expre}, 
then $M^{C}(q)$ 
is (up to a permutation of $C$) 
the set of matrices of the form
$$
\begin{pmatrix}
* & 0 & * & * & * \\
0 & * & * & * & * \\
0 & 0 & * & * & * \\
0 & 0 & * & * & * \\
0 & 0 & 0 & 0 & * \\
\end{pmatrix}.
$$

If $\mbf l=(l_1,\ldots,l_s)$ is a tuple of nonnegative integers, we shall
 also write $\mbf l$ for the preordered set
$$\begin{picture}(70,2)
\multiput(0,0)(20,0){2}{\node}
\multiput(50,0)(20,0){2}{\node}
\put(0,0){\usebox{\rvec}}
\put(50,0){\usebox{\rvec}}
\put(33,0){$\cdots$}
\put(1,1){$l_1$}
\put(21,1){$l_2$}
\put(51,1){$l_{s-1}$}
\put(71,1){$l_s$}
\end{picture}
$$
Note that then $P^{\mbf l}(q)$, $\rho_{\mbf l}(q)$, etc. are as 
defined previously.

If $C$ is a preordered set, we shall aim to reduce the problem of finding 
$\rho_C (q)$ to orbit-counting problems for matrices of size less than $|C|$,
 using Theorem~\ref{gensq}.
In some cases, we shall be able to express $\rho_C (q)$ in terms of 
$\rho_{O}(q)$ where $O$ 
varies among preordered sets of size less than
$|C|$. This will allow us to calculate $\rho_C (q)$ using recursion for some 
$C$. 

Let $(C,\preq)$ be a finite preordered set. 
Let $D\subseteq C$ be a minimal clot in $C$ with $|D|=k$, say.
Let $E=\{x\in C: y\not\preq x \; \forall y\in D \}$. Let $\bar{D}=C\setminus D$, 
$\bar E=C\setminus E$ and $C'=C\setminus (D\cup E)$. We shall view 
$C\times C$ matrices as endomorphisms of a vector space $V$ over $\mF_q$
equipped with a basis $\{e_i\}_{i\in C}$. Let 
$V_D= \Span\{e_i\}_{i\in D}$, $V_E=\Span\{e_i\}_{i\in E}$, 
$V_{\bar D}=V/V_D$ and $V_{C'}=V/(V_D+V_E)$. We may view  $M^{\bar D}(q)$
  as a subring of $\End(V_{\bar D})$ 
using the projections of $e_i$, 
$i\in \bar D$ as a basis of $V_{\bar D}$. Similarly, we may view
$M^{C'}(q)$ as a subring of $\End (V_{C'})$. 
(By abuse of notation, we shall view $\{e_i\}_{i\in C'}$ as a basis of $V_{C'}$.)   
Let $m=|C'|$. If 
$$
S=\{s_1>\cdots>s_r\}\subseteq [1,m-1],
$$ 
let $\Cal H_S=\Cal H_S (C')$ be the set of all flags $\mbf W=(W_1,\ldots, W_r)$ 
such that
$$
W_r \le W_{r-1} \le \cdots \le W_1 \le V_{C'}
$$ 
and $\dim W_i=s_i$ for all $i$. 
Let $\pi: V_{\bar D}\thra V_{C'}$ be the natural projection.
If $\mbf W\in \Cal H_S$, write $\pi^{-1}(\mbf W)$ for the flag
$(\pi^{-1}(W_1),\ldots,\pi^{-1}(W_r))$ in $V_{\bar D}$.
Let $M^{\bar D}_{\mbf W}(q) = \scr P(V_{\bar D}; \pi^{-1}(\mbf W))$.
That is, $M^{\bar D}_{\mbf W}(q)$ is the ring of all elements of $M^{\bar D}(q)$
whose action on $V_{C'}$ preserves $\mbf W$.
Let $P^{\bar D}_{\mbf W}(q)$ be the group of all invertible elements 
in $M^{\bar D}_{\mbf W}(q)$, and let $N^{\bar D}_{\mbf W}(q)$ be the set of all nilpotent
elements in $M^{\bar D}_{\mbf W}(q)$.   
  
\begin{prop}\label{preord} With the notation as in the previous paragraph,
suppose that
$H_S$ is a complete set of representatives of the $P^{C'}(q)$-orbits on 
$\Cal H_S$ for each $S\subseteq [1,m-1]$. Then
$$
\rho_C (q)  =  \sum_{j=0}^k p(k-j) 
\sum_{S\subseteq [1,m-1]} (r(j,S)+r(j,S\cup \{m\}))  
 \sum_{\mbf W\in H_S} 
\gamma(P^{\bar D}_{\mbf W}(q), N^{\bar D}_{\mbf W} (q)). 
$$
\end{prop}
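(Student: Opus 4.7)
The plan is to realize $M^C(q)$ as the endomorphism ring of a quiver representation $Q^*$ and then run the inductive strategy behind Theorem~\ref{gensq}. Let $Q_0$ be a quiver representation with $\End(Q_0)\cong M^{\bar D}(q)$, obtained by encoding the filtration of $V_{\bar D}$ by downward-closed subsets of $\bar D$. Adjoin $V_D$ to $Q_0$ as an isolated node of dimension $k$, then apply the $\Omega$ construction to produce an exact sequence $V_D\hookrightarrow V\twoheadrightarrow V_{\bar D}$ (so $\dim V=|C|$), and finally adjoin one more arrow $V_E\hookrightarrow V$ whose composition with $V\twoheadrightarrow V_{\bar D}$ is the original inclusion $V_E\hookrightarrow V_{\bar D}$. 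Call the resulting quiver representation $Q^*$. The last arrow forces endomorphisms of $Q^*$ to preserve $V_E$ as a subspace of $V$, which (combined with the other constraints) is exactly what is needed to check, in the spirit of Lemma~\ref{quivpar}, that $\End(Q^*)\cong M^C(q)$, $\Aut(Q^*)\cong P^C(q)$, and $N(Q^*)\leftrightarrow N^C(q)$; hence $\rho_C(q)=\gamma(\Aut(Q^*),N(Q^*))$.

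Next, run the inductive argument of Lemma~\ref{indlem}, powered by Corollary~\ref{genlink}. The crucial new feature compared to the proof of Theorem~\ref{gensq} is that the $V_E$-arrow restricts the off-diagonal block of any endomorphism of $V$ to vanish on $V_E$, i.e.\ to factor through $V_{C'}=V_{\bar D}/V_E$. In the dual picture of Proposition~\ref{link}, this means the intertwiner space $\Cal I(X_a,X_b)$ that appears at each step has $b=V_{C'}$ in place of $b=V_{\bar D}$. Iteratively applying Corollary~\ref{genlink} to peel off $V_D$, each new subspace introduced by the iteration lies inside $V_{C'}$, so the procedure ultimately produces a flag $\mbf W\in\Cal H_S$ in $V_{C'}$ (up to the $P^{C'}(q)$-action, with representatives taken in $H_S$) together with a residual $V_D$-piece of dimension $k-|\lambda|$, where $\lambda$ is the partition recording the successive ranks.

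The residual $V_D$-piece contributes $p(k-|\lambda|)$ orbits under $\GL(V_D)$ (by Jordan canonical form), while the remaining data is exactly $\gamma(P^{\bar D}_{\mbf W}(q),N^{\bar D}_{\mbf W}(q))$: the count of orbits of the stabiliser of $\pi^{-1}(\mbf W)$ in $P^{\bar D}(q)$ acting on those nilpotent elements of $M^{\bar D}(q)$ that preserve this lifted flag. Combining the sums via the same combinatorial manipulations used to pass from Theorem~\ref{gensq} to Corollary~\ref{sqcor1} --- in particular the identities~\eqref{coll1} and~\eqref{coll2}, which convert the sum over partitions $\lambda\in\Cal P_j^m$ into a sum over subsets $S\subseteq[1,m-1]$ with $m=|C'|$ --- yields the stated formula. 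The main obstacle is to propagate the $V_E$-constraint faithfully through the induction, so that at each step the new ``linking subspace'' lies in $V_{C'}$ rather than in $V_{\bar D}$; it is exactly this that produces the combinatorial coefficients $r(j,S)+r(j,S\cup\{m\})$ with $m=|C'|$, and the inner count $\gamma(P^{\bar D}_{\mbf W}(q),N^{\bar D}_{\mbf W}(q))$ defined via preservation of the lifted flag $\pi^{-1}(\mbf W)$ rather than a flag directly in $V_{\bar D}$.
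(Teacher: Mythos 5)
Your proposal takes a genuinely different route from the paper's. The paper never constructs a quiver representation for the whole of $M^{C}(q)$: instead it first uses Lemma~\ref{actprod} to fix the $\bar D$-part of the matrix (summing over a set $Z$ of representatives $X\in N^{\bar D}(q)$), then applies the projection $\pi_E$ to pass from $\gamma(G_X,A_X)$ to $\gamma(\pi_E(G_X),\pi_E(A_X))$, and then carefully verifies (via the identities $\chi((G'_X)^{(k)})=\pi_E(G_X)$ and $\chi((A'_X)^{(k)}_{\Cal N})=\pi_E(A_X)$) that the result lands exactly in the $Q^k$-setup on $W=V_{C'}$, so that the already-established Corollary~\ref{sqcor1} applies as a black box. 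A final application of Lemma~\ref{actprod} in reverse collapses $\sum_X \xi_S(G'_X,A'_X)$ to $\sum_{\mbf W\in H_S}\gamma(P^{\bar D}_{\mbf W}(q),N^{\bar D}_{\mbf W}(q))$.

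Your construction of the quiver $Q^*$ and the identification $\End(Q^*)\cong M^C(q)$ are fine; the trouble begins when you ``run the inductive argument of Lemma~\ref{indlem}, powered by Corollary~\ref{genlink}.'' Both results are formulated strictly for $Q'=\Omega(Q,a,b)$, and your $Q^*$ is $\Omega(Q,a,b)$ plus an extra arrow $V_E\hookrightarrow V$ into the $\Omega$-vertex $V$. That extra arrow changes $\Aut(Q^*)$ and $\End(Q^*)$, so Proposition~\ref{link} and Corollary~\ref{genlink} cannot be invoked as stated. You correctly anticipate what the modified intertwiner space should be (namely $\Cal I(X_a,X_{V_{C'}})$ rather than $\Cal I(X_a,X_{V_{\bar D}})$), and the dualisation computation behind Proposition~\ref{link} does go through with this replacement (the subgroup $A=\ker\pi'$ shrinks to $\{T:T|_{V_E}=0\}\cong\Hom(V_{C'},U_a)$, the set $D$ becomes $\{\bar T\bar X_b-X_a\bar T\}$, and $D^\perp=\Cal I(X_a,\bar X_b)$ by the same trace argument and Lemma~\ref{dimI}). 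But none of this is proved in your write-up; it is asserted. Likewise, the claim that the representatives $\Xi$ of Corollary~\ref{genlink} should be chosen as orbits on $\Hom(U_a,V_{C'})$ rather than on $\Hom(U_a,V_{\bar D})$, and that the $E$-arrow's constraint is entirely absorbed after the first peeling step (so that the remaining iteration is verbatim Lemma~\ref{indlem}), require proof. Your last paragraph in fact names this as ``the main obstacle'' and then stops, which is where the argument has a genuine gap: the analogue of Proposition~\ref{link} and Corollary~\ref{genlink} for a quiver with an extra injection into the $\Omega$-vertex must be established before the iteration can begin. The paper's route — fix $X$, project away $E$ via $\pi_E$, verify the $\chi$-identities, then quote Corollary~\ref{sqcor1} — does exactly this work in a form where it only needs to be done once, and is arguably the shorter path.
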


\begin{proof} Let $Z$ be a complete set of representatives of $P^{\bar D}(q)$-orbits on
$N^{\bar D}(q)$. Let 
$\pi_{D}: M^C (q) \ra M^{\bar D}(q)$ be the natural projection. 
If $X\in Z$, let $G_X = \pi_D^{-1}(\Stab_{P^{\bar D}(q)} (X)) \cap P^{C}(q)$ 
and $A_X = \pi_D^{-1}(X) \cap N^C (q)$.
 By Lemma~\ref{actprod},
\begin{equation}\label{preord1}
\rho_C (q) = \gamma(P^C (q), N^C (q))= \sum_{X\in Z} \gamma(G_X, A_X). 
\end{equation}
Let $\pi_E: M^C (q) \ra M^{\bar E}(q)$ be the natural projection. Let $X\in Z$.
It is easy to see that $\pi_E$ establishes a bijection between the 
$G_X$-orbits on $A_X$ and the $\pi_E (G_X)$-orbits on $\pi_E (A_X)$. So
\begin{equation}\label{preord2}
\gamma(G_X, A_X)= \gamma(\pi_E (G_X), \pi_E (A_X))
\end{equation}
Let $Q$ be the quiver representation
$$
\xymatrix{
V_D \ar@{^{(}->}[r] &  V_{\bar E} \ar@{->>}[r] &  V_{C'}
}
$$
where the first map is induced by the inclusion $V_D \hra V_{C}$ and the second map is the natural projection. Then $Q$ is isomorphic to $Q^k$ in the notation
 of Section~\ref{indarg} ($V_{C'}$ plays the role of $W$).  
Let $\chi: \End(Q) \ra \End(V_{\bar E}; V_D)$ be the isomorphism which maps an 
endomorphism $\mbf Y$ of $Q$ to the action of $\mbf Y$ on $V_{\bar E}$.

Our aim is to express $\gamma(\pi_E (G_X), \pi_E (A_X))$ by applying 
Corollary~\ref{sqcor1} to the quiver $Q^k$. In fact, this expression is the only substantial step of the proof.

Let $G'_X\le P^{C'}(q)$ be the image of $G_X$ under the natural map 
$P^C (q) \ra P^{C'}(q)$.
Let $\eta: M^{\bar D}(q)\ra M^{C'}(q)$ be the natural projection, and let  
 $A'_X=\{\eta(X)\}\subseteq N^{C'}(q)$. 
Let 
$\omega: \End(V_{\bar E}; V_D)\ra \M_{C',C'}(q)$
 be the natural projection,  $(y_{ij})_{i,j\in \bar E}\mapsto (y_{ij})_{i,j\in C'}$.
Since $d\preq c$ for all $d\in D$ and $c\in C'$, 
$$
\omega^{-1}(M^{C'}(q))=M^{\bar E}(q).
$$
We identify the quiver representation $Q$ with $Q^k$; let $(G'_X)^{(k)}$ and 
$(A'_X)^{(k)}_{\Cal N}$ be as defined in Section~\ref{indarg}.
Then, by those definitions,
\begin{eqnarray*}
\chi((G'_X)^{(k)})& = &\omega^{-1}(G'_X)\cap P^{\bar E}(q) \quad \text{and} \\
\chi((A'_X)^{(k)}_{\Cal N}) & = & \omega^{-1}(A'_X)\cap N^{\bar E}(q).
\end{eqnarray*}
Hence, clearly, 
$\chi((A'_X)^{(k)}_{\Cal N}) = \pi_E (A_X)$.  

We claim that $\chi((G'_X)^{(k)}) = \pi_E (G_X)$.
It is clear that $\pi_E (G_X)\subseteq \omega^{-1}(G'_X) \cap P^{\bar E}(q)$. For the 
converse, suppose that $Y=(y_{ij})\in P^{\bar E}(q)$ satisfies $\omega(Y)\in G'_X$.
Then there exists a matrix $B=(b_{ij})\in M^C (q)$ such that 
$\pi_D (B)$ fixes $X$ and $b_{ij}=y_{ij}$ for all $i,j\in C'$. Define 
$R=(r_{ij})\in M^{C}(q)$ by `gluing together' $\pi_D (B)$ and $Y$: 
$r_{ij}=y_{ij}$ if $i,j \in D\cup C'$ 
and $r_{ij}=b_{ij}$ if $i,j\in E\cup C'$. Then $\pi_D (R)=\pi_D (B)$, so $R\in G_X$, 
and $\pi_E (R)=Y$. Hence, $Y\in \pi_E (G_X)$.     

We are now in a position to apply 
Corollary~\ref{sqcor1}:
\begin{eqnarray}
\gamma(\pi_E (G_X), \pi_E (A_X)) & = & \gamma((G'_X)_{\vph N}^{(k)}, (A'_X)_{\Cal N}^{(k)}) \nonumber\\ 
 & &  
 \!\!\!\!\!\!\!\!\!\!\!\!\!\!\!\!\!\!\!\!\!\!\!\!\!\!\!\!\!\!\!\!\!\!\!\!\!\!\!\!\!
  \!\!\!\!\!\!\!\!\!\!\!\!\!\!\!\!\!\!\!\!\!\!\! = \;  
 \sum_{j=0}^k p(k-j) 
\sum_{S\subseteq [1,m-1]} (r(j,S)+r(j,S\cap \{m\})) \xi_S (G'_X, A'_X) \label{preord3}
\end{eqnarray}  
By definition, 
\begin{eqnarray*}
\xi_S (G'_X,A'_X) & = &\gamma(G'_X, \{\mbf W\in \Cal H_{S}: 
X(\pi^{-1}(\mbf W))= \pi^{-1} (\mbf W) \}) \\
& = & \gamma(\Stab_{P^{\bar D}(q)}(X), \{\mbf W\in \Cal H_{S}: X(\pi^{-1}(\mbf W))= 
\pi^{-1}(\mbf W) \} ).
\end{eqnarray*} 
Let 
$$
L=\{ (X,\mbf W)\in N^{\bar D}(q)\times \Cal H_S: X(\pi^{-1}(\mbf W)) = 
\pi^{-1}(\mbf W) \}
$$
By Lemma~\ref{actprod} (applied in two different ways),
\begin{equation}
\sum_{X\in Z} \xi_S (G'_X, A'_X )  
  =   \gamma(P^{\bar D}(q), L) 
  =   \sum_{\mbf W\in H_S} \gamma(P^{\bar D}_{\mbf W} (q), N^{\bar D}_{\mbf W}(q)). 
\label{preord4}      
\end{equation}
The result follows by combining~\eqref{preord1}, \eqref{preord2}, 
\eqref{preord3} 
and~\eqref{preord4}.
\end{proof}

Therefore, in order to compute $\rho_C (q)$, 
it is enough to find a set of representatives $H_S$ for 
each $S\subseteq [1,m-1]$ and to calculate 
$\gamma(P^{\bar D}_{\mbf W}(q), N^{\bar D}_{\mbf W}(q))$ for every 
$\mbf W\in H_S$. 

Suppose that any two elements of $C'$ are comparable. 
Then $C'$ is isomorphic to some $\mbf l=(l_1,\ldots,l_s)$. 
Let $S=\{t_1>\cdots>t_u\} \subseteq [1,m-1]$.
The orbits
 of $P^{\mbf l}(q)$ on $\Cal H_S (\mbf l)$ are well understood. (In fact, these
orbits correspond to orbits of pairs of flags, of appropriate dimensions, 
under the action of $\GL_m (q)$.)
Let $\Cal A_S^{\mbf l}$ be the set of tuples 
$\mbf n=(n_{ij})_{i\in [1,s], j\in [1,u]}$ of nonnegative integers satisfying
$$
l_i\ge n_{i1}\ge n_{i2} \ge \cdots \ge n_{iu} 
\quad \text{for all } i\in [1,s] \; \text{ and }
$$
$$
\sum_{i=1}^s n_{ij} = t_j \qquad\quad \text{ for all } j\in [1,u].
$$
Relabel the basis $\{e_i\}_{i\in C'}$ of $V_{C'}$ as
$\{f_{ij}\}_{i\in [1,s],j\in [1,l_i]}$ 
so that $P^{\mbf l}(q)=P^{C'}(q)$ is the stabiliser of the flag 
consisting of the subspaces $\Span \{f_{ij} \}_{i\in [1,r],j\in [1,l_i]}$, 
$r=1,\ldots,s$.
(This relabelling is given by an isomorphism between $C'$ and $\mbf l$.)
For each $\mbf n\in \Cal A_S^{\mbf l}$, define a flag 
$\mbf W^{\mbf n}\in \Cal H_S (\mbf l)$ as follows:
$$
W^{\mbf n}_a = \Span \{ f_{ij} : \: 1\le i \le s,\: 1\le j \le n_{ia} \},
\quad a=1,2,\ldots u.
$$

\begin{lem}\label{doublefl} \emph{\cite[Example 2.10]{MWZ}} Let 
$\mbf l=(l_1,\ldots,l_s)$ be a tuple of nonnegative integers with
$m=l_1+\cdots+l_s$. Let $S\subseteq [1,m-1]$. Then 
$\{\mbf W^{\mbf n}: \mbf n\in \Cal A_S^{\mbf l} \}$ is a complete set of
representatives of 
$P^{\mbf l}(q)$-orbits on $\Cal H_S (\mbf l)$.
\end{lem}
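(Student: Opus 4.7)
The plan is to reduce the problem to the classical classification of relative position of two flags under the general linear group. By definition $P^{\mbf l}(q)$ is the stabiliser in $\GL(V_{C'})$ of the standard flag
$$
\Cal F^{\mbf l}:\; F_1 \le F_2 \le \cdots \le F_s = V_{C'}, \qquad F_r=\Span\{f_{ij}: i\le r\},
$$
with $\dim F_r - \dim F_{r-1}=l_r$. Hence two flags $\mbf W,\mbf W' \in \Cal H_S(\mbf l)$ lie in the same $P^{\mbf l}(q)$-orbit if and only if the pairs $(\Cal F^{\mbf l},\mbf W)$ and $(\Cal F^{\mbf l},\mbf W')$ are $\GL(V_{C'})$-conjugate. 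So the first step is to prove that, for each flag $\mbf W=(W_1,\ldots,W_u)$ of type $S$, the full $\GL(V_{C'})$-invariant of the pair $(\Cal F^{\mbf l},\mbf W)$ is the array
$$
n_{ij}(\mbf W) \;=\; \dim\bigl(W_j\cap F_i\bigr) - \dim\bigl(W_j\cap F_{i-1}\bigr), \qquad i\in[1,s],\; j\in[1,u],
$$
i.e.\ the dimensions of the successive quotients of $\mbf W\cap \Cal F^{\mbf l}$.

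Next I would check that the array $\mbf n(\mbf W)=(n_{ij}(\mbf W))$ always lies in $\Cal A_S^{\mbf l}$: the inequality $n_{ij}\ge n_{i,j+1}$ follows from $W_{j+1}\le W_j$, the bound $n_{i1}\le l_i$ is immediate from $\dim(F_i/F_{i-1})=l_i$, and the identity $\sum_i n_{ij}=\dim W_j=t_j$ is a telescoping sum. Then I would verify that each $\mbf W^{\mbf n}$ constructed in the statement has the property $n_{ij}(\mbf W^{\mbf n})=n_{ij}$, which is obvious from the explicit spanning sets. Thus the map $\mbf n\mapsto \mbf W^{\mbf n}$ is a section of $\mbf W\mapsto \mbf n(\mbf W)$ restricted to representatives.

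The heart of the argument, and the main obstacle, is the converse: to show that if $\mbf n(\mbf W)=\mbf n(\mbf W')$ then some $g\in P^{\mbf l}(q)$ carries $\mbf W$ to $\mbf W'$. The standard way to do this is by building compatible bases, stratum by stratum. Inside each quotient $F_i/F_{i-1}\cong \mF_q^{l_i}$, the traces of the flag $\mbf W$ give a chain of subspaces of dimensions $n_{i1}\ge n_{i2}\ge\cdots\ge n_{iu}$; pick a basis of $F_i/F_{i-1}$ adapted to that chain, and similarly for $\mbf W'$. Lift these bases to $F_i$ successively (which is where preservation of the flag $\Cal F^{\mbf l}$ — equivalently, membership of the resulting change-of-basis matrix in $P^{\mbf l}(q)$ — enters), and arrange the lifts so that each $W_j$ and each $W_j'$ is spanned by a subset of the basis vectors. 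A straightforward induction on $i$ shows that the transition map between the two bases belongs to $P^{\mbf l}(q)$ and sends $\mbf W$ to $\mbf W'$.

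Finally, distinctness of the representatives $\mbf W^{\mbf n}$ for distinct $\mbf n\in \Cal A_S^{\mbf l}$ is automatic because $\mbf n(\mbf W^{\mbf n})=\mbf n$, completing the proof. The combinatorial data $\Cal A_S^{\mbf l}$ are thus in bijection with $P^{\mbf l}(q)$-orbits on $\Cal H_S(\mbf l)$, which is exactly the statement claimed (and agrees with the cited classification of $\GL$-orbits on pairs of flags in \cite{MWZ}).
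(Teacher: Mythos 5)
The paper does not actually prove this lemma; it cites the classification of $\GL_m(q)$-orbits on pairs of flags from \cite[Example~2.10]{MWZ}, as announced in the sentence preceding the statement. Your argument fills in that citation with a self-contained proof and is essentially correct. The array $n_{ij}(\mathbf W)=\dim(W_j\cap F_i)-\dim(W_j\cap F_{i-1})$ (with $F_0=0$) is indeed a complete $P^{\mathbf l}(q)$-invariant; the verifications that $\mathbf n(\mathbf W)\in\Cal A_S^{\mathbf l}$ and that $\mathbf n(\mathbf W^{\mathbf n})=\mathbf n$ are as you say. For the completeness step, the phrase ``arrange the lifts'' deserves one more sentence to be convincing: for each pair $(i,k)$ set $j^*(i,k)=\max\{j: n_{ij}\ge k\}$ and choose the lift $b_{i,k}\in F_i$ of $\bar b_{i,k}$ inside $W_{j^*}\cap F_i$ (or arbitrarily in $F_i$ if no such $j$ exists); then for every $j$ the set $\{b_{i,k}: n_{ij}\ge k\}$ is contained in $W_j$, has cardinality $\sum_i n_{ij}=t_j=\dim W_j$, and is linearly independent by looking modulo the filtration $F_1\le\cdots\le F_s$, hence is a basis of $W_j$. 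In particular the choices for different $i$ are independent, so no genuine induction on $i$ is needed. With that detail inserted, your proof is complete and gives exactly the classical classification the paper imports from \cite{MWZ}.
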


If $C'$ is isomorphic to some $\mbf l$, we choose 
a set of representatives $H_S$ as given
 by Lemma~\ref{doublefl}. 
In particular, for every $\mbf W\in H_S$, each $W_j$ is spanned 
by a subset of the standard basis $\{e_i\}_{i\in C'}$. 
 
Let $\mbf n \in \Cal A_S^{\mbf l}$. Write $n_{i0}=l_i$ and $n_{i,u+1}=0$ for all $i\in [1,s]$. 
Define a new preorder $\preq'$ on $\bar D$ as follows: 
\begin{enumerate}[(i)]
\item\label{neword1} suppose $i_1,i_2 \in C'$; 
let $a_1,b_1,a_2,b_2$ be such that
$e_{i_1}=f_{a_1,b_1}$ and $e_j=f_{a_2,b_2}$; then 
$i_1\preq' i_2$ if and only if $a_1 \le a_2$ and there exists 
$c\in [0,u]$ such that $b_1 \le n_{a_1,c}$ and $b_2> n_{a_2,c+1}$; 
\item if either $i\notin C'$ or $j\notin C'$, then $i\preq' j$ if and only 
if $i\preq j$. 
\end{enumerate}
Note that, in case~\eqref{neword1}, $i_1\preq i_2$ if and only if $a_1\le a_2$.
It is straightforward to check that, if $i_1,i_2\in C'$, then $i_1\preq' i_2$ if and 
only if $i_1\preq i_2$ and, for all $j\in [1,u]$, $e_{i_1}\in W^{\mbf n}_j$ implies 
$e_{i_2}\in W^{\mbf n}_j$. (In fact, this is our motivation for defining $\preq'$).

Let $O=O(S,\mbf n)$ be the preordered set $(\bar D,\preq')$. It follows that
$M^{\bar D}_{\mbf W^{\mbf n}}(q)=M^{O(S,\mbf n)}(q)$. Hence,
$$
\gamma(P^{\bar D}_{\mbf W^{\mbf n}}(q), N^{\bar D}_{\mbf W^{\mbf n}}(q)) 
= \rho_{O(S,\mbf n)}(q),
$$
and we deduce the following from Proposition~\ref{preord}.

\begin{cor}\label{preordcor} 
Let $(C,\preq)$ be a preordered set with a minimal 
clot $D$. Let $k=|D|$. Let $C'=\{x\in C: y\preq x \;\forall y\in D\}\setminus D$. Let $m=|C'|$. Suppose
the preordered set $C'$ is isomorphic to some $\mbf l=(l_1,\ldots,l_s)$. Then
$$
\rho_C (q)  =  \sum_{j=0}^k p(k-j) 
\sum_{S\subseteq [1,m-1]} (r(j,S)+r(j,S\cup \{m\}))  
 \sum_{\mbf n\in \Cal A_S^{\mbf l}} \rho_{O(S,\mbf n)} (q), 
$$ 
where $O(S,\mbf n)$ is defined in terms of $\bar D=C\setminus D$, $C'$ 
and $\preq$ as above.
\end{cor}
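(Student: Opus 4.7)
The plan is to assemble Corollary~\ref{preordcor} as a direct specialization of Proposition~\ref{preord}, using Lemma~\ref{doublefl} to make the inner orbit sum explicit and then identifying each stabilizer ring with the matrix ring of the preordered set $O(S,\mbf n)$.

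First I would start from the formula of Proposition~\ref{preord}, which already gives
$$
\rho_C(q) = \sum_{j=0}^k p(k-j) \sum_{S\subseteq [1,m-1]} (r(j,S)+r(j,S\cup \{m\})) \sum_{\mbf W\in H_S} \gamma(P^{\bar D}_{\mbf W}(q), N^{\bar D}_{\mbf W}(q))
$$
for any choice of representatives $H_S$ of $P^{C'}(q)$-orbits on $\Cal H_S$. Since $C'\simeq \mbf l$, Lemma~\ref{doublefl} supplies the explicit set $H_S = \{\mbf W^{\mbf n} : \mbf n \in \Cal A_S^{\mbf l}\}$, so the inner sum becomes $\sum_{\mbf n \in \Cal A_S^{\mbf l}} \gamma(P^{\bar D}_{\mbf W^{\mbf n}}(q), N^{\bar D}_{\mbf W^{\mbf n}}(q))$.

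Next, for a fixed $\mbf n$, I would identify $M^{\bar D}_{\mbf W^{\mbf n}}(q)$ with $M^{O(S,\mbf n)}(q)$. By the observation made just before the statement of the corollary, for $i_1,i_2\in C'$ one has $i_1\preq' i_2$ if and only if $i_1\preq i_2$ and every $W^{\mbf n}_j$ containing $e_{i_1}$ also contains $e_{i_2}$; for indices involving $D$ or $E$ (the part of $\bar D$ outside $C'$) the two preorders agree. Reading off which matrix entries are forced to vanish: an entry in position $(i_2,i_1)$ of an element of $M^{\bar D}(q)$ can be nonzero precisely when $i_1 \preq i_2$, and the extra condition that the action on $V_{C'}$ preserve the flag $\mbf W^{\mbf n}$ is exactly the refinement $i_1 \preq' i_2$ on the $C'\times C'$ block. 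Hence $M^{\bar D}_{\mbf W^{\mbf n}}(q) = M^{O(S,\mbf n)}(q)$ as subrings of $\End(V_{\bar D})$, and consequently $P^{\bar D}_{\mbf W^{\mbf n}}(q)=P^{O(S,\mbf n)}(q)$ and $N^{\bar D}_{\mbf W^{\mbf n}}(q)=N^{O(S,\mbf n)}(q)$.

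From this identification,
$$
\gamma(P^{\bar D}_{\mbf W^{\mbf n}}(q), N^{\bar D}_{\mbf W^{\mbf n}}(q)) = \rho_{O(S,\mbf n)}(q),
$$
and substituting into the formula from Proposition~\ref{preord} yields the claimed expression. The main obstacle is really just the bookkeeping in the previous paragraph: one must carefully check that the combinatorial condition defining $\preq'$ on $C'\times C'$ coincides with the algebraic condition ``preserves each $\pi^{-1}(W^{\mbf n}_j)$'', and that extending $\preq$ only on $C'\times C'$ produces a genuine preorder on $\bar D$. Both verifications are routine given the basis description of $W^{\mbf n}_j$, after which the corollary is a direct substitution.
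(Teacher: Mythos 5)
Your proposal matches the paper's own derivation: apply Proposition~\ref{preord}, substitute the representatives $H_S=\{\mbf W^{\mbf n}:\mbf n\in\Cal A_S^{\mbf l}\}$ supplied by Lemma~\ref{doublefl}, identify each $M^{\bar D}_{\mbf W^{\mbf n}}(q)$ with $M^{O(S,\mbf n)}(q)$, and read off $\gamma(P^{\bar D}_{\mbf W^{\mbf n}}(q), N^{\bar D}_{\mbf W^{\mbf n}}(q))=\rho_{O(S,\mbf n)}(q)$.

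One bookkeeping caution in the identification step. With the paper's convention that $x_{ij}=0$ unless $i\preq j$, a nonzero entry in position $(i_2,i_1)$ of an element of $M^{\bar D}(q)$ requires $i_2\preq i_1$ rather than $i_1\preq i_2$ as you wrote. Correspondingly, preservation of $W^{\mbf n}_a$ by the $C'\times C'$ block (acting by $e_j\mapsto\sum_i x_{ij}e_i$) forces $x_{ij}\ne 0\Rightarrow\bigl(e_j\in W^{\mbf n}_a\Rightarrow e_i\in W^{\mbf n}_a\bigr)$, so the characterisation of $\preq'$ that actually yields $M^{\bar D}_{\mbf W^{\mbf n}}(q)=M^{O(S,\mbf n)}(q)$ is: for $i_1,i_2\in C'$, $i_1\preq' i_2$ iff $i_1\preq i_2$ and $e_{i_2}\in W^{\mbf n}_a\Rightarrow e_{i_1}\in W^{\mbf n}_a$ for all $a$. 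This reverses the implication in the verbal gloss you quoted from the paper; that gloss is a slip in the text (the formal definition of $\preq'$ via the $n_{ij}$ is the one that is consistent with the algebra, as you can check directly on Example~\ref{ex1111} with $W_1=\langle e_3\rangle$, where the formal definition gives $3\preq' 4$ but not $3\preq' 2$). Neither of these sign slips affects the conclusion $M^{\bar D}_{\mbf W^{\mbf n}}(q)=M^{O(S,\mbf n)}(q)$, and once that is in place the corollary follows by substitution exactly as you describe; so the argument is sound.
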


Since $|\bar{D}|<|C|$, this allows us to compute $\rho_C (q)$ by recursion as long as we can always find a minimal clot $D$ such that any two 
elements of $C'$ are comparable. Using this method, one may compute 
$\rho_{(l_1,\ldots,l_s)}(q)$ for all tuples $\mbf l$ with  $l_1+\cdots+l_s\le 6$,
thus proving Proposition~\ref{rhosmall} (the explicit expressions are given at the end of Section~\ref{dualrep}). Indeed, one can choose the clots
$D$ in such a way that the only preordered set occurring in that computation
for which Corollary~\ref{preordcor} is not applicable is the one considered
in Example~\ref{Delta2ex} below. For more detail on the computation, 
see~\cite[Appendix B]{thesis}. 
(The computation also uses the symmetry $\rho_C (q)=\rho_{C^*}(q)$ proved 
in Section~\ref{dualrep} below.) 

We now 
consider some examples of computations using Proposition~\ref{preord} and 
Corollary~\ref{preordcor}. For ease of notation, 
we list the flags $\mbf W=\mbf W^{\mbf n}\in H_S$ directly, without giving 
$\mbf n$.

\begin{exam}\label{ex1111} 
Suppose $C$ is isomorphic to $(1,1,1,1)$: say, $C=[1,4]$ with the 
usual order. The only minimal clot is $D=\{1\}$. Then $E=\varnothing$ and 
$C'=\{2,3,4\}$. The only non-zero terms in Corollary~\ref{preordcor} 
are given by
$S=\varnothing$ and $S=\{1\}$. If $S=\varnothing$, we get a summand 
$\rho_{(1^3)}(q)$. 
Assume $S=\{1\}$. Then 
any $\mbf W\in \Cal H_S$ consists of a single $1$-dimensional space $W_1$. 
So $H_S$ consists of $\langle e_2 \rangle$, 
$\langle e_3 \rangle$ and 
$\langle e_4 \rangle$. If $W_1=\langle e_2 \rangle$, 
then $O\simeq (1^3)$, so we get another 
term $\rho_{(1^3)}(q)$. If $W_1=\langle e_3 \rangle$, 
$O$ is isomorphic to the preordered set $\Delta_1$ given by the diagram
\begin{equation}\label{Delta1}
\begin{array}{c}
\begin{picture}(20,12)
\multiput(0,10)(20,0){2}{\node}
\put(0,0){\node}
\put(0,10){\usebox{\rvec}}
\put(0,0){\usebox{\ruvec}}
\put(1,11){1}
\put(21,11){1}
\put(0,2){1}
\end{picture}
\end{array}
\end{equation} 
\end{exam}
Finally, if $W_1=\langle e_4 \rangle$, then 
$O\simeq (1,1) \sqcup (1)$, so we get a term $\rho_{(1,1)}(q)$. Hence,
$$
\rho_{(1^4)}(q)= 2\rho_{(1^3)}(q)+\rho_{\Delta_1}(q)+\rho_{(1^2)}(q).
$$

\begin{exam} We compute $\rho_{\Delta_1}(q)$, where $\Delta_1$ is 
given by~\eqref{Delta1}. We may take for $D$ either of the minimal elements of 
$\Delta_1$. Then $E$ consists of the other minimal element, and $C'$ consists of the maximal element 
(label the elements $1,2,3$ so that $3$ is the maximal element). 
If $S=\varnothing$, we get $\rho_{(1^2)}(q)$. If $S=\{1\}$, then necessarily
$W_1=\langle e_3 \rangle$ and we get $\rho_{(1^2)}(q)$ again. Hence, 
$\rho_{\Delta_1}(q)=2\rho_{(1^2)}(q)=4$ for all $q$.
\end{exam}

\begin{exam}\label{Delta2ex} We now compute $\rho_{\Delta_2}(q)$ where $\Delta_2$ is the 
partially ordered set
$$
\begin{picture}(40,12)
\multiput(0,10)(20,0){3}{\node}
\put(20,0){\node}
\put(0,10){\usebox{\rvec}}
\put(20,10){\usebox{\rvec}}
\put(0,10){\usebox{\rdvec}}
\put(20,0){\usebox{\ruvec}}
\put(1,11){1}
\put(21,11){1}
\put(20,2){1}
\put(41,11){1}
\end{picture}
$$
\end{exam}
We may assume that the elements of $\Delta_2$ are $1,2,3,4$ 
where $1$ is the smallest element and $4$ is the largest. 
The only minimal clot is $D=\{1\}$. 
We have $C'=\{2,3,4\}$. 
In this case, $C'$ is not isomorphic to any $\mbf l$, so we 
apply Proposition~\ref{preord} directly. 
If $S=\varnothing$, we get the term $\rho_{\Delta_1}(q)$. Assume $S=\{1\}$. 
The following is a complete set of representatives for the action of 
$P^{C'}(q)$ on the $1$-dimensional subspaces $W_1$ in $V_{C'}$:
$$
\{ \langle e_2 \rangle,
\langle e_3 \rangle, \langle e_4 \rangle, \langle e_2+e_3 \rangle \}.
$$
In each of the first two cases, 
$\gamma(P^{\bar D}_{\mbf W}(q), N^{\bar D}_{\mbf W} (q))=\rho_{\Delta_1}(q)$. 
If $W_1=\langle e_4 \rangle$, then $N^{\bar D}_{\mbf W}(q)$ contains just the 
zero matrix, so the corresponding term in the sum is equal to $1$. 
Finally, suppose $W_1=\langle e_2+e_3 \rangle$. Then we do not get a term of the form 
$\rho_{O}(q)$ for a preordered set $O$. It 
is possible to apply Corollary~\ref{genlink} 
to find the corresponding term, 
but in this case we may find the orbits directly. 
With respect to the basis $\{e_2,e_3,e_4\}$, we have:
\begin{eqnarray*}
P^{C'}_{\mbf W} (q) & = & \left\{  
\begin{pmatrix}       
a & 0 & * \\
0 & a & * \\
0 & 0 & b 
\end{pmatrix}
: a,b \in \mF_q \setminus \{0\} \right\} 
 \quad \text{and} \\
N^{C'}_{\mbf W} (q) & = &
\left\{\begin{pmatrix}
0 & 0 & * \\
0 & 0 & * \\ 
0 & 0 & 0 
\end{pmatrix}\right\}.
\end{eqnarray*}
It is easy to see that a complete set of representatives of $P^{C'}_{\mbf W}(q)$-orbits
on $N^{C'}_{\mbf W}(q)$ is 
$$\left\{
\begin{pmatrix}
0 & 0 & 0 \\
0 & 0 & 0 \\
0 & 0 & 0 \\
\end{pmatrix},
\begin{pmatrix}
0 & 0 & 1 \\
0 & 0 & 0 \\
0 & 0 & 0 \\
\end{pmatrix},
\begin{pmatrix}
0 & 0 & a \\
0 & 0 & 1 \\
0 & 0 & 0
\end{pmatrix}
: a\in \mF_q
\right\}.
$$
So in this case $\gamma(P^{C'}_{\mbf W}(q),N^{C'}_{\mbf W}(q))=q+2$.
Thus,
$$
\rho_{\Delta_2}(q)=3\rho_{\Delta_1}(q)+ 1+ (q+2) = q+15.
$$

\begin{exam}\label{ex222} We consider $\rho_{(2,2,2)}(q)$. $(2,2,2)$ is isomorphic to 
$C=[1,6]$ with clots $\{1,2\}$, $\{3,4\}$ and $\{5,6\}$ (in the increasing order).
The only minimal clot is $D=\{1,2\}$, so $C'=\{3,4,5,6\}$. The only non-zero terms
 in Corollary~\ref{preordcor} come from $S=\varnothing$, $S=\{1\}$ and $S=\{2\}$.   
If $S=\varnothing$, we get $p(2)\rho_{(2,2)}=2\rho_{(2,2)}$. 
If $S=\{1\}$, $H_S$ contains two flags $\mbf W=(W_1)$, namely 
 $W_1=\langle e_3 \rangle$ and $W_1=\langle e_5 \rangle$. 
In the first case, we get $\rho_{(1,1,2)}(q)$. 
In the second case, $O$ is isomorphic to the preordered set $\Delta_3$ given by
$$
\begin{picture}(20,12)
\multiput(0,10)(20,0){2}{\node}
\put(0,0){\node}
\put(0,10){\usebox{\rvec}}
\put(0,0){\usebox{\ruvec}}
\put(1,11){2}
\put(21,11){1}
\put(0,2){1}
\end{picture}
$$
These two terms occur with the multiple $2$ in the sum because
$r(2,\{1\})=r(1,\{1\})=1$. Finally, if $S=\{2\}$, the possibilities for 
$W_1$ are: $\langle e_3,e_4 \rangle$, $\langle e_3,e_5 \rangle$ and 
$\langle e_5,e_6 \rangle$. These give rise to the terms $\rho_{(2,2)}(q)$,
$\rho_{\Delta_2}(q)$ and $4$ respectively. 
(In the last case, $O\simeq (2) \sqcup (2)$.) Thus,
$$
\rho_{(2,2,2)}(q)=3\rho_{(2,2)}(q) + 2\rho_{(1,1,2)}(q)+ 
2\rho_{\Delta_3}(q) + \rho_{\Delta_2}(q) + 4.   
$$ 
\end{exam}

\section{Dual quiver representations}\label{dualrep}

Let $Q=(E_0,E_1,\mbf U,\bs\alpha)$ be a quiver representation over a field
$\mF_q$. Define
the dual representation $Q^*=(E_0,E_1^*,\mbf U^*, \bs\alpha^*)$ as follows:
\begin{enumerate}[(i)]
\item $E_1^*$ is obtained by inverting all the arrows in $E_1$: for each 
$e\in E_1$, $\sigma^{*}(e)=\tau(e)$ and $\tau^*(e)=\sigma(e)$;
\item $\mbf U=(U_a^*)_{a\in E_0}$ where $U_a^*$ is the dual space to $U_a$;
\item $\bs\alpha^*=(\alpha_e^*)_{e\in E_1}$ where $\alpha_e^*$ is the dual 
map to $\alpha_e$.
\end{enumerate}
  
Recall that $\theta(Q)=\gamma(\Aut(Q),N(Q))$. If $\mbf X\in \End(Q)$, let
$\mbf X^*\in \End(Q^*)$ be the endomorphism $(X_a^*)_{a\in E_0}$. 

\begin{prop}\label{dualquiv} Let $Q=(E_0,E_1,\mbf U,\bs\alpha)$ be a quiver
 representation. The map $\mbf X\mapsto \mbf X^*$ is a ring isomorphism between
$\End(Q)$ and $\End(Q^*)^{\op}$. 
Hence,
$\theta(Q)=\theta(Q^*)$.  
\end{prop}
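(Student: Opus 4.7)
The proof is essentially a bookkeeping exercise in dualising, so the plan is to verify step by step that duality defines a well-behaved map on endomorphism rings, and then to deduce the orbit-count equality.

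First I would check that the map $\mbf X \mapsto \mbf X^*$ actually lands in $\End(Q^*)$. The commutation condition for $\mbf X \in \End(Q)$ is $\alpha_e X_{\sigma(e)} = X_{\tau(e)} \alpha_e$ for every $e \in E_1$. Applying the dual functor (on finite-dimensional spaces) reverses the order of composition, giving
$$X_{\sigma(e)}^* \alpha_e^* = \alpha_e^* X_{\tau(e)}^*.$$
Since in $Q^*$ we have $\sigma^*(e) = \tau(e)$ and $\tau^*(e) = \sigma(e)$, this is exactly the defining relation $\alpha_e^* X_{\sigma^*(e)}^* = X_{\tau^*(e)}^* \alpha_e^*$ for $\mbf X^*$ to lie in $\End(Q^*)$. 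So $\mbf X \mapsto \mbf X^*$ is well defined.

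Next I would verify the ring-theoretic properties. Additivity is immediate from linearity of duality on each component. For multiplicativity, observe that if $\mbf X, \mbf Y \in \End(Q)$, then $(\mbf X \mbf Y)_a = X_a Y_a$, and dualising gives $(X_a Y_a)^* = Y_a^* X_a^*$; since this is the product in $\End(Q^*)^{\op}$, the map is a ring homomorphism $\End(Q) \to \End(Q^*)^{\op}$. It is a bijection because applying the construction twice yields the natural identification of $Q^{**}$ with $Q$ (all $U_a$ are finite dimensional). Hence $\End(Q) \cong \End(Q^*)^{\op}$ as rings.

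Finally I would transfer this isomorphism to an orbit-count statement. Any ring (anti-)iso\-morphism preserves the set of units and the set of nilpotent elements, because these are defined purely in terms of the ring structure; so the map restricts to a bijection $\Aut(Q) \to \Aut(Q^*)$ and another bijection $N(Q) \to N(Q^*)$. Under the anti-isomorphism conjugation transforms as
$$(\mbf g \mbf X \mbf g^{-1})^* = (\mbf g^{-1})^* \mbf X^* \mbf g^* = (\mbf g^*)^{-1} \mbf X^* \mbf g^*,$$
so $\Aut(Q)$-orbits on $N(Q)$ correspond bijectively to $\Aut(Q^*)$-orbits on $N(Q^*)$, giving $\theta(Q) = \theta(Q^*)$. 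None of the steps poses a real obstacle; the only mild subtlety is remembering to use the opposite ring structure so that the contravariance of duality becomes a genuine (anti-)isomorphism, and noting that conjugation orbits are preserved by anti-isomorphisms as well as by isomorphisms.
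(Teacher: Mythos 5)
Your proof is correct and follows essentially the same route as the paper: check that dualisation reverses the commutation relations and hence maps into $\End(Q^*)$, note that it is a ring homomorphism into the opposite ring, and use the natural identification $Q^{**}\cong Q$ to conclude bijectivity; the paper then deduces $\theta(Q)=\theta(Q^*)$ by appealing to the remark that $\Aut(Q)$ and $N(Q)$ are determined by the ring structure. Your explicit verification that conjugation orbits are preserved under the anti-isomorphism is a welcome expansion of what the paper leaves to the reader.
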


\begin{proof} Clearly, $\mbf X\mapsto \mbf X^*$ is a ring homomorphism from
$\End(Q)$ to $\End(Q^*)^{\op}$. If $V$ is a finite dimensional vector space, 
$V^{**}$ is naturally equivalent to $V$.
This equivalence establishes an isomorphism from $Q^{**}$ onto $Q$, which 
identifies $\mbf X^{**}$ and $\mbf X$ for all $\mbf X\in \End(Q)$. 
Hence, $\mbf X\mapsto \mbf X^*$ is a bijection.
\end{proof}

Now let $(C,\preq)$ be a preordered set. 
Let $V$ be a vector space over $\mF_q$ with a basis $\{e_i\}_{i\in C}$. 
As in the previous section, we may then identify $M^C (q)$ with
 a subring of $\End(V)$ using this basis.  

For each $i\in C$, let 
$\Cal D(i)=\{j\in C: j\preq i\}$. Let $T_C=T_C(q)$ be the quiver representation
$(C\sqcup\{0\},E_1,\mbf U,\bs\alpha)$ such that
\begin{enumerate}[(i)]
\item $E_1$ is equal to $C$ as a set, $\sigma(i)=i$ and $\tau(i)=0$ for 
all $i\in C$;
\item $U_0=V$, and $U_i=\Span \{e_j\}_{j\in \Cal D(i)}\le V$ for all $i\in C$;
\item $\alpha_i$ is the inclusion map $U_i \hra V$ for each $i\in C$.
\end{enumerate} 

It is easy to check that $\mbf X\mapsto X_0$ is a ring isomorphism from 
$\End(T_C)$ onto $M^C (q)$. Hence,
\begin{equation}\label{TCeq}
\rho_C (q) = \theta(T_C(q)).
\end{equation}

The following is an easy exercise.

\begin{prop}\label{TC} Let $C$ be a preordered set. Then
the rings $\End(T_C^*)$ and $M^{C^*}(q)$ are isomorphic. Hence,
$\rho_{C^*}(q) = \theta(T_C^*)$.
\end{prop}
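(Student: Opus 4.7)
The plan is to deduce the isomorphism by combining Proposition~\ref{dualquiv} with a standard anti-isomorphism of matrix rings given by transposition, and then to read off the orbit-counting consequence from the fact, noted at the start of Section~\ref{quivers}, that $\Aut(Q)$ and $N(Q)$ are intrinsic to the ring structure on $\End(Q)$.

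First, I would invoke Proposition~\ref{dualquiv} together with the ring isomorphism $\End(T_C)\cong M^C(q)$ that precedes~\eqref{TCeq} to obtain
$$
\End(T_C^*)\cong \End(T_C)^{\op}\cong M^C(q)^{\op}.
$$
The remaining task is therefore to exhibit a ring isomorphism $M^C(q)^{\op}\cong M^{C^*}(q)$. For this I would verify that the transpose map $X=(x_{ij})_{i,j\in C}\mapsto X^t$ does the job: if $x_{ij}=0$ whenever $i\not\preq j$, then the $(i,j)$-entry of $X^t$ equals $x_{ji}$, which vanishes unless $j\preq i$, i.e.\ unless $i\preq^* j$, so $X^t\in M^{C^*}(q)$. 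The transpose is clearly additive and bijective, and $(XY)^t=Y^t X^t$, so it is a ring anti-isomorphism $M^C(q)\to M^{C^*}(q)$, equivalently an isomorphism $M^C(q)^{\op}\to M^{C^*}(q)$. Composing the three isomorphisms yields $\End(T_C^*)\cong M^{C^*}(q)$.

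For the ``hence'' clause I would argue as follows. Since $\Aut(T_C^*)$ and $N(T_C^*)$ are defined in terms of the ring structure of $\End(T_C^*)$, a ring isomorphism transports them to $P^{C^*}(q)$ and $N^{C^*}(q)$ respectively, and it also transports the conjugation action. Consequently
$$
\theta(T_C^*)=\gamma(\Aut(T_C^*),N(T_C^*))=\gamma(P^{C^*}(q),N^{C^*}(q))=\rho_{C^*}(q),
$$
exactly as in the derivation of~\eqref{TCeq} for $T_C$ itself.

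I do not anticipate a serious obstacle: the statement is essentially bookkeeping, combining an existing general duality for quiver representations with the elementary observation that transposition interchanges the preorders $\preq$ and $\preq^*$. The only place to be careful is keeping track of the $\op$: if one tries to build a direct isomorphism $\End(T_C^*)\to M^{C^*}(q)$ without going through Proposition~\ref{dualquiv}, one must check (using that each $\alpha_i^*$ is surjective so that $X_i$ is determined by $X_0$, and translating the condition that $X_0$ preserves each annihilator $U_i^\perp$ into a vanishing pattern in the dual basis) that composition on the $\End(T_C^*)$ side corresponds to ordinary matrix multiplication on the $M^{C^*}(q)$ side; the indirect route via $M^C(q)^{\op}$ and transposition sidesteps this check.
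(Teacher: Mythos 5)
Your proof is correct. The paper leaves this as an easy exercise, so there is no authorial proof to compare against, but the chain $\End(T_C^*) \cong \End(T_C)^{\op} \cong M^C(q)^{\op} \cong M^{C^*}(q)$ via Proposition~\ref{dualquiv}, the pre-\eqref{TCeq} isomorphism, and transposition is clean and valid; the transpose does indeed carry the vanishing pattern $i\not\preq j$ to the pattern $i\not\preq^* j$, and $(XY)^t=Y^tX^t$ makes it an anti-isomorphism, i.e.\ an isomorphism out of the opposite ring. The deduction of $\rho_{C^*}(q)=\theta(T_C^*)$ from the ring isomorphism is also right, since units, nilpotents, and conjugation orbits are all invariants of the ring structure. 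One small remark: you could have skipped the $\op$-bookkeeping entirely by observing that $\theta$ depends only on $\End(Q)$ up to ring isomorphism \emph{or} anti-isomorphism (an anti-isomorphism still preserves units, nilpotents, and conjugacy, since $\psi(gxg^{-1})=\psi(g)^{-1}\psi(x)\psi(g)$), so the anti-isomorphism $\End(T_C^*)\cong M^C(q)^{\op}\cong M^{C^*}(q)^{\op}$ obtained without transposing already suffices for the ``hence'' clause; but your version proves the stated isomorphism of rings as asserted, which is the cleaner match for the proposition as written.
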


Combining~\eqref{TCeq}, Proposition~\ref{dualquiv} and Proposition~\ref{TC}, we 
obtain the following result.

\begin{cor}\label{dualpre} For any finite preordered set $C$, for all prime
powers $q$,
$$
\rho_C (q) = \rho_{C^*}(q).
$$
In particular, if $(l_1,\ldots,l_s)$ is a tuple of nonnegative integers,
then 
$$
\rho_{(l_1,\ldots,l_s)}(q) = \rho_{(l_s,\ldots,l_1)}(q).
$$
\end{cor}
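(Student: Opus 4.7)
The plan is to assemble Corollary~\ref{dualpre} directly from the three ingredients just established: identity~\eqref{TCeq}, Proposition~\ref{dualquiv}, and Proposition~\ref{TC}. Specifically, for any finite preordered set $C$, I would chain the equalities
\[
\rho_C(q) \;=\; \theta(T_C(q)) \;=\; \theta(T_C(q)^*) \;=\; \rho_{C^*}(q),
\]
where the first equality is~\eqref{TCeq}, the middle one is Proposition~\ref{dualquiv} applied to the quiver representation $T_C(q)$, and the third is Proposition~\ref{TC}. No further work is required for the general statement, since each of the three propositions has been proved in the preceding paragraphs.

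For the ``in particular'' clause concerning tuples $(l_1,\ldots,l_s)$, I would first observe that the tuple $(l_1,\ldots,l_s)$, viewed as a preordered set via the convention introduced in Section~\ref{presets}, has as its dual preordered set exactly the tuple obtained by reversing the order of its clots, namely $(l_s,\ldots,l_1)$. This is immediate from the definition of $C^*$: reversing $\preq$ on a chain of clots of sizes $l_1,\ldots,l_s$ produces the chain of clots of sizes $l_s,\ldots,l_1$ in the opposite direction. Applying the general statement with $C=(l_1,\ldots,l_s)$ then yields $\rho_{(l_1,\ldots,l_s)}(q)=\rho_{(l_s,\ldots,l_1)}(q)$.

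There is no real obstacle: all substantive content has been absorbed into Proposition~\ref{dualquiv} (whose proof relies on the canonical double-dual identification $V^{**}\simeq V$ to show that $\mathbf{X}\mapsto \mathbf{X}^*$ is a bijective anti-isomorphism, hence preserves nilpotence and conjugacy orbits) and into Proposition~\ref{TC} (which identifies $\End(T_C^*)$ with $M^{C^*}(q)$ via the natural basis). The only thing to double-check is that the identification used in Proposition~\ref{TC} genuinely carries the ring structure on $\End(T_C^*)$ over to $M^{C^*}(q)$ in a way compatible with the $\op$-twist appearing in Proposition~\ref{dualquiv}; since $\theta(Q)$ depends only on the ring $\End(Q)$ up to isomorphism and is clearly invariant under passage to the opposite ring (conjugation classes of nilpotent elements in $R$ and in $R^{\op}$ coincide under $r\mapsto r$, because the group $R^\times$ acts on the same underlying set), this compatibility is automatic.
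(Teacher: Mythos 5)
Your proof is correct and follows precisely the route the paper intends: the corollary is stated immediately after the sentence ``Combining~\eqref{TCeq}, Proposition~\ref{dualquiv} and Proposition~\ref{TC}, we obtain the following result,'' and your chain $\rho_C(q)=\theta(T_C(q))=\theta(T_C(q)^*)=\rho_{C^*}(q)$ is exactly that combination, with the observation about reversing a chain of clots supplying the ``in particular'' clause. Your extra remark on the compatibility of the $\op$-twist with $\theta$ is a valid (if not strictly needed) sanity check, since Proposition~\ref{dualquiv} already delivers $\theta(Q)=\theta(Q^*)$ directly.
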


The following table gives the values of $\rho_{\mbf l}(q)$ for all tuples 
$\mbf l=(l_1,\ldots,l_s)$ with $l_1+\cdots+l_s\le 6$ (see~\cite[Appendix B]{thesis} for a detailed computation). In view of 
Corollary~\ref{dualpre}, we list only one representative for each pair 
of distinct tuples $(l_1,\ldots,l_s)$ and $(l_s,\ldots,l_1)$.

\begin{longtable}{|c|c|}
\hline
$\mbf l$ & $\rho_{\mbf l}(q)$ \\
\hline\endhead
\multicolumn{2}{c}{\textit{Continued on the next page}}
\endfoot
\endlastfoot
(1) & 1 \\
\hline
(2) & 2 \\
\hline
(1,1) & 2 \\
\hline
(3) & 3 \\
\hline
(1,2) & 4 \\
\hline
$(1,1,1)$ 
& 5 \\
\hline
(4) & 5 \\
\hline
(1,3) & 7 \\
\hline
(2,2) & 10 \\
\hline
$(1,1,2)$ 
& 12 \\ 
\hline 
$(1,2,1)$ 
& 11 \\
\hline
$(1,1,1,1)$ 
& 16 \\
\hline
(5) & 7 \\
\hline
(1,4) & 12 \\
\hline
(2,3) & 18 \\
\hline
$(1,2,2)$ 
& 30 \\
\hline
$(2,1,2)$ 
& 31 \\
\hline
$(1,1,3)$ 
& 23 \\
\hline
$(1,3,1)$ 
& 21 \\
\hline 
$(1,1,1,2)$ 
& 43 \\
\hline
$(1,1,2,1)$ 
& 40 \\
\hline
$(1,1,1,1,1)$ 
& 61 \\
\hline
(6) & 11 \\
\hline
(1,5) & 19 \\
\hline
(2,4) & 34 \\
\hline
(3,3) & 37 \\
\hline
$(1,2,3)$ 
& 63 \\
\hline
$(1,3,2)$ 
& 62 \\
\hline
$(2,1,3)$ 
& 66 \\
\hline
$(1,1,4)$
& 43 \\
\hline
$(1,4,1)$ 
& 38 \\
\hline 
$(2,2,2)$ 
& $q+89$ \\
\hline
$(1,1,1,3)$ 
& 93 \\
\hline
$(1,1,3,1)$ 
& 84 \\
\hline
$(1,1,2,2)$ 
& $q+121$ \\
\hline
$(1,2,1,2)$
& 120 \\
\hline
$(1,2,2,1)$ 
& 113 \\
\hline
$(2,1,1,2)$
& $q+127$ \\
\hline
$(1,1,1,1,2)$ 
& $q+185$ \\
\hline
$(1,1,1,2,1)$ 
& 173 \\
\hline
$(1,1,2,1,1)$ 
& $q+170$ \\
\hline
$(1,1,1,1,1,1)$ 
& $q+273$ \\
\hline
\end{longtable}

\vspace{1.5cm}

\begin{center} \large{\textbf{Appendix}

\textbf{Ranks of partition matrices} 

 Anton Evseev and George Wellen 

}
\end{center}

Let $S$ be a subset of $\mN$. 
If $k\in \mZ$, let $p(k,S)$ be the number of partitions $\lda$ such that 
$|\lda|=k$ and $\lda_i\in S$ for all $i$. Recall that $r(k,S)$ is the number of 
partitions $\lda$ such that $\lda_i\in S$ for all $i$ and, for each $s\in S$, there exists $i$ such that $\lda_i=s$.
 
Fix a positive integer $m$, and let $n=m(m+1)/2$. Let $\mathsf P$ be the matrix whose columns are indexed by non-empty subsets of $[1,m]$, whose rows are indexed by nonnegative integers and whose $(k,S)$ entry is $p(k,S)$. Let $\mathsf R$ be the matrix whose rows are indexed by positive integers $k$ and whose columns are indexed by non-empty subsets $S$ of $[1,m]$ with the $(k,S)$ 
entry equal to $r(k,S)$. Thus, $\mathsf P$ and $\mathsf R$ have 
infinitely many rows and $2^m-1$ columns each. 
Our aim is to find out the ranks of $\mathsf P$ and 
$\mathsf R$ and to find linear relations between rows 
of $\mathsf P$ and $\mathsf R$.

If $S\subseteq [1,m]$, let 
$$P_S(X)=\sum_{k=0}^{\infty} p(k,S) X^k$$
be the generating function of the sequence $(p(k,S))_{s=0}^{\infty}$.
Here, and in the sequel, $X$ is a formal variable, and all expressions involving $X$ are assumed to be elements of the ring $\mQ [[X]]$ of formal power series over $\mQ$.  

Observe that 
$$P_{\{i\}}(X)=\sum\limits_{k\geq 0}X^{ik}=\frac{1}{1-X^i}.$$
It follows that, for any non-empty finite subset $S\subset \mN$,
\begin{equation}\label{GFeq2}
P_S(X)=\frac{1}{\prod_{i\in S}(1-X^i)}
\end{equation}
Also, by the inclusion-exclusion formula, 
for all $S\subseteq [1,m]$ and all $k\in\mN$,
\begin{equation}\label{inex}
r(k,S)=\sum_{S'\subseteq S, S\ne\varnothing} (-1)^{|S|-|S'|} p(k,S).
\end{equation}
(Note that $p(k,\varnothing)=0$ for $k>0$.) The following result can be easily proved by induction.
\begin{alem}\label{sums}
For every natural number $k\in [1,n]$ there exists a
non-empty subset $S_k$ of $[1,m]$ such that
$k=\sum\limits_{i\in S_k} i$.
\end{alem}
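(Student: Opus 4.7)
The plan is a straightforward induction on $m$, exploiting the identity $n_m := m(m+1)/2 = n_{m-1} + m$. The base case $m=1$ is immediate: $[1,n_1]=\{1\}$, and $S_1=\{1\}$ works.

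For the inductive step, I would assume the claim holds for $m-1$ and fix $k \in [1, n_m]$, splitting into two cases according to whether $k \le n_{m-1}$. In the first case, the inductive hypothesis yields directly a non-empty $S_k \subseteq [1,m-1] \subseteq [1,m]$ summing to $k$, and nothing more is needed. In the second case $k \in [n_{m-1}+1, n_m]$, so
$$k-m \;\in\; [(m-1)(m-2)/2,\, n_{m-1}] \;\subseteq\; [0, n_{m-1}].$$
If $k = m$ (equivalently $k-m = 0$, which can occur only for $m \le 2$), simply take $S_k = \{m\}$. Otherwise $k - m \in [1, n_{m-1}]$, and the inductive hypothesis produces a non-empty $S' \subseteq [1, m-1]$ with $\sum_{i \in S'} i = k - m$; setting $S_k := S' \cup \{m\}$ gives the required subset, the union being automatically disjoint since $m \notin [1, m-1]$.

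There is no substantial obstacle here: this is a classical elementary fact about representing integers as sums of distinct elements of $\{1, 2, \ldots, m\}$, and the only mild piece of bookkeeping is the boundary case $k = m$, which is handled separately above. A slicker but equivalent phrasing would be a greedy argument: subtract the largest element of $[1,m]$ that does not exceed $k$ and recurse with $m$ decreased accordingly; this, however, reduces to essentially the same induction.
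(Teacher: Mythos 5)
Your proof is correct, and since the paper simply asserts that Lemma~\ref{sums} ``can be easily proved by induction'' without supplying details, your induction on $m$ (using $n_m = n_{m-1}+m$ and splitting on whether $k \le n_{m-1}$) is exactly the kind of argument intended. The boundary case $k = m$ in the second branch is handled cleanly, and the claim that it arises only for $m \le 2$ checks out.
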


Let
$$\Delta(X)=\prod_{i=1}^m (1-X^i).$$
Define integers $c_0,c_1,\ldots,c_n$ by 
$$\Delta(X)=\sum_{i=0}^n c_i^{(m)} X^i. $$  
\begin{athm}\label{prank} $\rank (\mathsf P)=n=m(m+1)/2$. 
Moreover, for all $k\ge n$ and all non-empty $S\subseteq [1,m]$,
$$p(k,S)=-\sum_{j=k-n}^{k-1} c_{k-j} p(j,S).$$ 
\end{athm}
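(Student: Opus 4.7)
The plan is to exploit the generating function identity~\eqref{GFeq2} directly. For any non-empty $S\subseteq [1,m]$, multiplying through by $\Delta(X)$ gives
$$\Delta(X)\,P_S(X) \;=\; \prod_{i\in [1,m]\sm S} (1-X^i),$$
a polynomial of degree $n - \sum_{i\in S} i$. Since $S$ is non-empty this degree is at most $n-1$, so the coefficient of $X^k$ on the right-hand side vanishes for $k\ge n$. Equating coefficients of $X^k$ on the left then yields $\sum_{j=0}^n c_j\, p(k-j,S) = 0$, and since $c_0=1$ this rearranges to the claimed recursion.

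The recursion shows that every row of $\mathsf P$ indexed by some $k\ge n$ is a $\mQ$-linear combination of the $n$ preceding rows, hence, inductively, of rows $0,1,\ldots,n-1$. This yields $\rank(\mathsf P)\le n$. For the matching lower bound I would exhibit $n$ linearly independent columns of $\mathsf P$. Writing $Q_S(X) := \Delta(X)\,P_S(X)$, linear independence of columns of $\mathsf P$ (viewed through their generating functions in $\mQ[[X]]$) is equivalent to linear independence of the corresponding polynomials $Q_S$ in $\mQ[X]$, since each generating function encodes its column entry-by-entry and multiplication by $\Delta(X)$ is an injective operation on $\mQ[[X]]$ (its constant term is $1$).

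By Lemma~\ref{sums}, for each $d\in [0,n-1]$ there is a non-empty $S_d\subseteq [1,m]$ with $\sum_{i\in S_d} i = n-d$: take $S_0=[1,m]$ when $d=0$, and apply the lemma with value $n-d\in [1,n-1]$ for $d\ge 1$. Then $Q_{S_d}$ has degree exactly $d$, so the $n$ polynomials $Q_{S_0},\ldots,Q_{S_{n-1}}$ have pairwise distinct degrees and are therefore $\mQ$-linearly independent. This gives $\rank(\mathsf P)\ge n$, and the two bounds combine to $\rank(\mathsf P)=n$. No serious obstacle is expected: the whole argument hinges on the degree bound $\deg(\Delta\cdot P_S)\le n-1$ for non-empty $S$, after which both the recursion and the rank computation are routine generating-function manipulations combined with Lemma~\ref{sums}.
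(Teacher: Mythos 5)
Your proposal is correct and follows essentially the same route as the paper's proof: multiply $P_S$ by $\Delta$, observe the product is a polynomial of degree $n-\sum_{i\in S}i\le n-1$, read off the recursion and upper bound on the rank, and then use Lemma~\ref{sums} to realise every degree in $[0,n-1]$ among the polynomials $\Delta\cdot P_S$, giving the matching lower bound. The only difference is cosmetic: you spell out the injectivity of multiplication by $\Delta$ and the explicit indexing $S_d$ of witnesses, while the paper leaves those points implicit.
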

\begin{proof} Let $S$ be a non-empty subset of $[1,m]$.
By \eqref{GFeq2}, 
\begin{eqnarray*}
P_S(X) \Delta(X) & = & \prod_{i\in [1,m]\setminus S} (1-X^i), \quad \text{hence}, 
\label{GFeq}\\ 
\sum_{k=0}^{\infty} \sum_{j=\max(0,k-n)}^k c_{k-j} p(j,S) X^k  & = & 
\prod_{i\in [1,m]\setminus S} (1-X^i). 
\end{eqnarray*}
If $k\ge n$, the coefficient in $X^k$ on the right-hand side is $0$ (since 
$S\ne\varnothing$), so
$$\sum_{j=k-n}^k c_{k-j} p(j,S) = 0.$$
Then the expression for $p(k,S)$ follows from the fact that $c_0=1$. It follows that $\rank(\mathsf P)\le n$. 

To show that $\rank(\mathsf P)\ge n$, 
it is enough to prove that the polynomials 
$P_S(X)\Delta(X)$ span a subspace of dimension at least $n$ in $\mQ [[X]]$ as 
$S$ varies among the non-empty subsets of $[1,m]$. By~\eqref{GFeq}, 
$$\deg(P_S\cdot \Delta)=\sum\limits_{i\in [1,m]\setminus S} i
 = n-\sum\limits_{i\in S} i.$$
Hence, by Lemma \ref{sums}, for each integer 
$k\in [0,n-1]$, there exists a non-empty $S\subseteq [1,m]$ such that 
 $\deg(P_S \cdot \Delta)=k$. The result follows. \end{proof}    

\begin{acor}\label{rrank} $\rank (\mathsf R)=n$. Moreover, for all $k>n$ and all non-empty $S\subseteq [1,m]$,
$$r(k,S)=-\sum_{j=k-n}^{k-1} c_{k-j} r(j,S).$$
\end{acor}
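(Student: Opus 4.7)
The plan is to derive both statements from Theorem~\ref{prank} via the inclusion-exclusion identity~\eqref{inex}, which for $k\geq 1$ reads
$$r(k,S)=\sum_{\substack{T\subseteq S\\ T\neq\varnothing}}(-1)^{|S|-|T|}p(k,T),$$
since $p(k,\varnothing)=0$ for $k\geq 1$. This is M\"obius inversion on the lattice of non-empty subsets of $[1,m]$, and it is invertible (the inverse being $p(k,T)=\sum_{\varnothing\neq S\subseteq T}r(k,S)$ for $k\geq 1$). Consequently $\mathsf R$ is obtained from the submatrix $\mathsf P'$ of $\mathsf P$ consisting of rows $k\geq 1$ by right-multiplication by an invertible matrix, so $\rank(\mathsf R)=\rank(\mathsf P')$.

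For the recurrence, I would fix $k>n$ and $S\subseteq[1,m]$, substitute Theorem~\ref{prank} into each $p(k,T)$, and swap the order of summation. Because $k>n$ forces $j\in[k-n,k-1]$ to satisfy $j\geq 1$, the inclusion-exclusion identity applies at each such $j$ and reassembles $\sum_T(-1)^{|S|-|T|}p(j,T)$ into $r(j,S)$, yielding
$$r(k,S)=-\sum_{j=k-n}^{k-1}c_{k-j}\,r(j,S).$$
This recurrence also shows $\rank(\mathsf R)\leq n$, so it remains to prove that $\rank(\mathsf P')=n$.

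For that, I would exhibit row $0$ of $\mathsf P$ as a linear combination of rows $1,\ldots,n$. The key point is that the Theorem~\ref{prank} recurrence is already valid at the boundary $k=n$: the proof shows that $\sum_{j=0}^n c_{n-j}\,p(j,S)$ equals the coefficient of $X^n$ in $\prod_{i\in[1,m]\setminus S}(1-X^i)$, and this polynomial has degree $n-\sum_{i\in S}i<n$ whenever $S$ is non-empty, so the coefficient vanishes. Since $c_n=(-1)^m$, the leading coefficient of $\Delta(X)$, is non-zero, one can solve for $p(0,S)=-c_n^{-1}\sum_{j=1}^n c_{n-j}\,p(j,S)$ uniformly in $S$, exhibiting row $0$ of $\mathsf P$ as a fixed linear combination of rows $1,\ldots,n$. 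Hence $\rank(\mathsf P')=\rank(\mathsf P)=n$, and the corollary follows. The only delicate step is verifying that $k=n$ lies in the range of the Theorem~\ref{prank} recurrence and that $c_n\neq 0$; everything else is routine bookkeeping with M\"obius inversion.
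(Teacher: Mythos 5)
Your proof is correct and follows essentially the same route as the paper: derive the recurrence by substituting Theorem~\ref{prank} into the inclusion-exclusion identity~\eqref{inex} (noting $k>n$ forces $j\geq 1$ so the reassembly into $r(j,S)$ is valid), then observe that $\rank(\mathsf R)$ equals the rank of $\mathsf P$ with its $0$-row deleted, and finally recover that row from rows $1,\ldots,n$ via the $k=n$ instance of the recurrence together with $c_n=(-1)^m\neq 0$. The paper's proof is more compressed but uses exactly these steps; you have merely spelled out the Möbius-inversion reasoning that the paper leaves implicit.
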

\begin{proof} The expression for $r(k,S)$ follows from Theorem \ref{prank} and 
\eqref{inex}. By \eqref{inex}, $\rank(\mathsf R)$ is equal to the rank of the matrix obtained by removing the $0$-row from $\mathsf P$. By Theorem \ref{prank},
$$c_n p(0,S) = - \sum_{j=1}^n c_{n-j} p(j,S),$$ 
so the $0$-row of $\mathsf P$ is a linear combination of the next $n$ rows 
($c_n\ne 0$). Hence, $\rank(\mathsf R)=n$. \end{proof}

Define $d_j=\sum_{i=0}^{n-j} c_i$ for $j=1,2,\ldots n$.
\begin{aprop}\label{dj} For all non-empty $S\subseteq [1,m]$,
$$\sum_{j=1}^n d_j r(j,S) = \sum_{j=1}^n d_j p(j,S) = 
\begin{cases}
1 & \text{if } S=[1,m], \\ 
0 & \text{otherwise.}
\end{cases}
$$
\end{aprop}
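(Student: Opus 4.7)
The plan is to reduce both sums to a single coefficient extraction from a generating function. Let $D(X)=\sum_{j=0}^n d_j X^j$ and $\tilde D(X) := X^n D(X^{-1}) = \sum_{k=0}^n d_{n-k} X^k$. The key step is the generating-function identity
$$\tilde D(X) = \frac{\Delta(X)}{1-X} = \prod_{i=2}^m (1-X^i),$$
which I would establish as follows. The coefficients $\tilde d_k := d_{n-k} = \sum_{i=0}^k c_i =: s_k$ are the partial sums of the coefficients of $\Delta(X)$. Hence $(1-X)\tilde D(X) = \sum_{k=0}^n (s_k - s_{k-1}) X^k - s_n X^{n+1}$; since $s_n = \Delta(1) = 0$ and $s_k - s_{k-1} = c_k$, this telescopes to $\Delta(X)$. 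A useful byproduct is $d_0 = s_n = 0$, so $\sum_{j=1}^n d_j p(j,S) = \sum_{j=0}^n d_j p(j,S)$.

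The latter sum equals $[X^n]\bigl(\tilde D(X) P_S(X)\bigr)$, by unpacking the convolution. Using \eqref{GFeq2},
$$\tilde D(X) P_S(X) = \frac{\prod_{i=2}^m(1-X^i)}{\prod_{i\in S}(1-X^i)},$$
which I would analyse by cases. If $1\notin S$, the right-hand side is a polynomial of degree at most $2+3+\cdots+m = n-1$, so $[X^n]=0$. If $1\in S$, cancelling common factors gives $g(X)/(1-X)$ with $g(X) := \prod_{i\in[2,m]\setminus S}(1-X^i)$. For $S=[1,m]$, $g\equiv 1$ and $1/(1-X) = \sum_k X^k$, giving $[X^n]=1$. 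For $1\in S \subsetneq [1,m]$, $g$ is a polynomial of degree at most $n-1$, so $[X^n]\bigl(g(X)/(1-X)\bigr) = g(1) = 0$, because $[2,m]\setminus S$ is non-empty. This yields $\sum_{j=1}^n d_j p(j,S) = 1$ if $S=[1,m]$ and $0$ otherwise.

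For the $r$-identity, I would invoke the inclusion-exclusion relation \eqref{inex}, namely $r(j,S) = \sum_{S'\subseteq S}(-1)^{|S|-|S'|} p(j,S')$, noting that $p(j,\varnothing)=0$ for $j\ge 1$ so the $S'=\varnothing$ term drops out. Substituting and applying the identity just proved for $p$ gives
$$\sum_{j=1}^n d_j r(j,S) = \sum_{\varnothing\ne S'\subseteq S}(-1)^{|S|-|S'|}\bigl[S'=[1,m]\bigr],$$
which equals $1$ if $S=[1,m]$ (only $S'=[1,m]$ contributes, with sign $+1$) and $0$ otherwise. The main content of the argument is the generating-function identity $\tilde D(X) = \prod_{i=2}^m(1-X^i)$; once that is in hand, the remaining coefficient extraction and inclusion-exclusion step are mechanical, so I do not anticipate a substantial obstacle.
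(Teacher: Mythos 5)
Your proof is correct, and it runs on the same generating-function engine as the paper's — both proofs exploit that the $d_j$ are partial sums of the coefficients of $\Delta$ and that $\Delta(1)=0$ — but you package the sum $\sum_j d_j p(j,S)$ differently. The paper recognizes it directly as $f(1)$ for the degree-$n$ polynomial $f(X)=(P_S(X)-1)\Delta(X)=\prod_{i\in[1,m]\setminus S}(1-X^i)-\Delta(X)$, and then the $0/1$ dichotomy falls out at once because $\Delta(1)=0$ and $\prod_{i\in[1,m]\setminus S}(1-1)$ is $1$ if $S=[1,m]$ and $0$ otherwise, with no case split on $1\in S$. You recognize the same sum as $[X^n]\bigl(\tilde D(X)P_S(X)\bigr)$ with $\tilde D(X)=\Delta(X)/(1-X)=\prod_{i\ge 2}(1-X^i)$; this costs you the (correct) auxiliary observations that $d_0=\Delta(1)=0$ and that $\tilde D$ is the reversal of $D$, and then a three-way case split ($1\notin S$; $S=[1,m]$; $1\in S\subsetneq[1,m]$) to finish. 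The two target quantities are in fact equal, since
$$f(1)=\sum_{k=0}^n[X^k]\bigl(P_S\Delta-\Delta\bigr)=\sum_{k=0}^n[X^k](P_S\Delta)-\Delta(1)=[X^n]\!\left(\frac{P_S\Delta}{1-X}\right),$$
so the computations are two faces of the same argument; the paper's is a little more economical because the evaluation at $X=1$ handles all $S$ uniformly, while yours trades that for the pleasant closed form $\tilde D(X)=\prod_{i=2}^m(1-X^i)$. The inclusion-exclusion reduction from $r$ to $p$ is identical to the paper's.
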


\begin{proof} Let $f(X)=(P_S (X) - 1) \Delta(X)$. Then
$$
f(X) =  \prod_{i\in [1,m]\setminus S} (1-X^i)-\Delta(X).
$$  
Thus, $f(X)$ is a polynomial of degree $n$, and $f(1)$ is the sum of the coefficients of $f(X)$ in $X^0,X^1,\ldots,X^n$.
On the other hand,
$$f(X)=\sum_{j=1}^{\infty} \sum_{i=0}^n c_i p(j,S) X^{i+j}.$$
Therefore, 
$$f(1)=\sum_{j=1}^n \sum_{i=0}^{n-j} c_i p(j,S) = \sum_{j=1}^n d_j p(j,S).$$
Hence, 
$$
\sum_{j=1}^n d_j p(j,S)= f(1)= 
\begin{cases}
1 & \text{if } S=[1,m], \\ 
0 & \text{otherwise.}
\end{cases}
$$
By~\eqref{inex}, $\sum_{j=1}^n d_j r(j,S)$ is equal to this too. 
\end{proof}

\bibliographystyle{amsplain}
\bibliography{porc,parabolic}

\end{document}